\documentclass[12pt,a4paper,reqno]{amsart} 
\pagestyle{plain}
\usepackage{footnote}
\usepackage{amssymb}
\usepackage{latexsym}
\usepackage{amsmath}
\usepackage{mathrsfs}
\usepackage[X2,T1]{fontenc}
\usepackage[applemac]{inputenc}
\usepackage{cite}
\usepackage{color}
\usepackage{calc}                   


\newcommand{\scal}[2]{\langle #1,#2\rangle}
\newcommand{\rr}[1]{\mathbf R^{#1}}

\newcommand{\cc}[1]{\mathbf C^{#1}}
\newcommand{\nm}[2]{\Vert #1\Vert _{#2}}

\newcommand{\sets}[2]{\{ \, #1\, ;\, #2\, \} }

\newcommand{\ep}{\varepsilon}

\newcommand{\cdo}{\, \cdot \, }

\newcommand{\eabs}[1]{\langle #1\rangle}     

\newcommand{\vrum}{\vspace{0.1cm}}

\newcommand{\IM}{\operatorname{Im}}

\newcommand{\im}{i}

\newcommand{\nn}[1]{{\mathbf N}^{#1}}
\newcommand{\maclA}{\mathcal A}

\newcommand{\maclH}{\mathcal H}
\newcommand{\maclE}{\mathcal E}
\newcommand{\maclD}{\mathcal D}

\newcommand{\maclR}{\mathcal R}
\newcommand{\maclS}{\mathcal S}

\newcommand{\mascE}{\mathscr E}

\newcommand{\mascS}{\mathscr S}

\setcounter{section}{\value{section}-1}   

\numberwithin{equation}{section}          

\newtheorem{thm}{Theorem}
\numberwithin{thm}{section}

\newcommand{\rubrik}{}
\newtheorem{prop}[thm]{Proposition}

\newtheorem{lemma}[thm]{Lemma}

\theoremstyle{definition}

\newtheorem{defn}[thm]{Definition}

\theoremstyle{remark}
\newtheorem{rem}[thm]{Remark}

\author{Elmira Nabizadeh}

\address{Department of Mathematics,
Linn{\ae}us University, V{\"a}xj{\"o}, Sweden}

\email{elmira.nabizadehmorsalfard@lnu.se}

\author{Christine Pfeuffer}

\address{Department of Mathematics,
University of Regensburg, Regensburg, Germany}

\email{christine.pfeuffer@mathematik.uni-regensburg.de}

\author{Joachim Toft}

\address{Department of Mathematics,
Linn{\ae}us University, V{\"a}xj{\"o}, Sweden}

\email{joachim.toft@lnu.se}

\title{Paley-Wiener properties for spaces of entire functions}

\keywords{Bargmann transform}

\subjclass[2010]{primary 46F05; 32A25; 32A36;
secondary 35Q40; 30Gxx}

\frenchspacing

\begin{document}


\begin{abstract}
We deduce Paley-Wiener results in the Bargmann setting. At the
same time we deduce characterisations of Pilipovi{\'c} spaces of
low orders. In particular we improve the characterisation of the
Gr{\"o}chenig test function space $\maclH _{\flat _1}=\maclS _C$,
deduced in \cite{Toft15}.
\end{abstract}

\maketitle

\par

\section{Introduction}\label{sec0}

\par

Paley-Wiener theorems characterize functions and distributions
with certain restricted supports in terms of estimates of their 
Fourier-Laplace transforms. For example, let $f$ be a distribution
on $\rr d$ and let $B_{r_0}(0)\subseteq \rr d$ be the ball with center
at origin and radius $r_0$. Then $f$ is supported in $B_{r_0}(0)$ if 
and only if 
$$
|\widehat{f}(\zeta)| \lesssim \eabs\zeta ^{N} e^{r_0 |\IM (\zeta )|}, 
\quad \zeta\in \cc d,
$$
for some $N\ge 0$. Furthermore, $f$ is supported in $B_{r_0}(0)$ 
and smooth, if and only if
$$
|\widehat{f}(\zeta)| \lesssim \eabs\zeta ^{-N} e^{r_0 |\IM (\zeta )|}, 
\quad \zeta\in \cc d,
$$
for every $N\ge 0$ (see e.{\,}g. \cite[Section 7]{Ho1}).

\par

A similar approach for ultra-regular functions of Gevrey types and 
corresponding ultra-distribution spaces can be done. In fact, let
$s>1$, $\maclD_s'(\rr d)$ be the set of all Gevrey distributions of
order $s$ and let $\maclE_s(\rr d)$ be the set of all smooth functions 
with Gevrey regularity $s$. (See \cite{Ho1} and Section \ref{sec1}
for notations.) Then it can be proved that $f\in \maclD_s'(\rr d)$
is supported in $B_{r_0}(0)$, if and only if 
$$
|\widehat{f}(\zeta)| \lesssim  e^{r_0 |\IM (\zeta )|+ r |\zeta|^{\frac{1}{s}}}, 
\quad \zeta\in \cc d,
$$
for every $r>0$. Furthermore $f\in \maclE_s(\rr d)$ is supported in
$B_{r_0}(0)$, if and only if 
$$
|\widehat{f}(\zeta)| \lesssim  e^{r_0 |\IM (\zeta )|- r |\zeta|^{\frac{1}{s}}}, 
\quad \zeta\in \cc d,
$$
for some $r>0$.

\par

We observe that $s$ in the latter result can not be pushed to be smaller,
because if $s\leq 1$, it does not make any sense to discuss compact support 
properties of $\maclD_s'(\rr d)$ and $\maclE_s(\rr d)$.

\par

In the paper we consider analogous Paley-Wiener properties when the
Fourier-Laplace transform above is replaced by the reproducing kernel
$\Pi _A$ of the Bargmann transform, and the image spaces are replaced by
suitable subspaces of entire functions on $\cc d$.
These subspaces were considered in \cite{Toft15,FeGaTo2} and are given by
\begin{equation}\label{Eq:IntrMainSpaces}
\begin{alignedat}{2}
\maclA _{\flat _\sigma} (\cc d)
&=
\bigcup _{r>0} \maclA _{r,\flat _\sigma} (\cc d),&
\qquad 
\maclA _{0,\flat _\sigma} (\cc d)
&=
\bigcap _{r>0} \maclA _{r,\flat _\sigma} (\cc d),
\\[1ex]
\maclA _{s} (\cc d)
&=
\bigcup _{r>0} \maclA _{r,s} (\cc d),&
\qquad 
\maclA _{0,s} (\cc d)
&=
\bigcap _{r>0} \maclA _{r,s} (\cc d),
\end{alignedat}
\end{equation}
when $\sigma >0$ and $0<s<\frac 12$,
where $ \maclA _{r,\flat _\sigma} (\cc d)$ and $ \maclA _{r,s} (\cc d)$
are the sets of all entire $F$ on $\cc d$ such that
$$
|F(z)|\lesssim e^{r|z|^{\frac {2\sigma}{\sigma +1}}}
\quad \text{respective}\quad
|F(z)|\lesssim e^{r(\log \eabs z)^{\frac 1{1-2s}}}.
$$
The spaces in \eqref{Eq:IntrMainSpaces} appear naturally when
considering the Bargmann transform images of an extended class
of Fourier invariant Gelfand-Shilov spaces, called Pilipovi{\'c} spaces
(see \cite{Toft15,FeGaTo2}).

\par

If $(z,w)$ is the scalar product of $z,w\in \cc d$, then
the reproducing kernel of the Bargmann transform is given by
\begin{align}
(\Pi _AF)(z) &= \pi ^{-d}\scal {F}{e^{(z,\cdo )-|\cdo |^2}},
\notag
\intertext{when
$F$ is a suitable function or (ultra-)distribution. If}
z&\mapsto F(z) e^{R|z|-|z|^2}
\in L^1(\cc d), \quad R>0
\label{Eq:FWeightL1Cond}
\intertext{holds and $d\lambda (w)$ is the Lebesgue measure
on $\cc d$, then
}
(\Pi _AF)(z) &= \pi ^{-d}\int _{\cc d}F(w)e^{(z,w)-|w|^2}\, d\lambda (w).
\notag
\end{align}

\par

A recent Paley-Wiener result with respect to the transform $\Pi _A$
and to the image spaces \eqref{Eq:IntrMainSpaces} is given in
\cite{Toft15}, where it is proved that if $L^\infty _c(\cc d) = \mascE '(\cc d)\cap
L^\infty (\cc d)$, then
\begin{equation}\label{Eq:OriginalMappings}
\Pi _A(\mascE '(\cc d)) = \Pi _A(L^\infty _c (\cc d)) =
\maclA _{\flat _1}(\cc d).
\end{equation}
Evidently, $L^\infty _c \subseteq \mascE '$, and the gap
between these spaces is rather large. It might therefore
be somehow surprising that the first equality holds in
\eqref{Eq:OriginalMappings}.

\par

In Section \ref{sec2} we improve 
\eqref{Eq:OriginalMappings} in different ways. Firstly we
show that we may replace $L^\infty _c(\cc d)$ in
\eqref{Eq:OriginalMappings} with the smaller space
$L^\infty _{A,c}(\cc d)$ given by
$$
L^\infty _{A,c}(\cc d) = \bigcup L^\infty _{A,c}(K),
$$
where $L^\infty _{A,c}(K)$ is the set of
all $F\cdot \chi _K$, where $F$ is analytic in a
neighbourhood of $K$ and $\chi _K$ is the characteristic
function of $K$. Secondly, we may replace $\mascE '(\cc d)$
with the set $\maclE _s'(\cc d)$ of all compactly supported
Gevrey distributions of order $s>1$. Summing up we improve
\eqref{Eq:OriginalMappings} into
\begin{equation}\tag*{(\ref{Eq:OriginalMappings})$'$}
\Pi _A(\maclE '_s(\cc d)) = \Pi _A(L^\infty _{A,c} (\cc d)) =
\maclA _{\flat _1}(\cc d),\qquad s>1.
\end{equation}

\par

In Section \ref{sec2} we also
deduce various kind of related mapping properties when
$\maclA _{\flat _1}(\cc d)$ in \eqref{Eq:OriginalMappings}
is replaced by any of the spaces in \eqref{Eq:IntrMainSpaces}.
More precisely, let $\chi \in L^\infty _c(\cc d)$ be non-negative,
radial symmetric in each complex variable $z_j$ and bounded
from below by a positive constant near the origin. Then we prove
\begin{alignat*}{2}
\Pi _A(\maclA _{0,\flat _{\sigma _0}}' (\cc d)\cdot \chi )
&=
\maclA _{\flat _{\sigma}} (\cc d),
& \quad \sigma &\in ({\textstyle{\frac 12}},1),\  \sigma _0 = \frac \sigma{2\sigma -1},
\\[1ex]
\Pi _A(\maclA _{\flat _{\sigma _0}} (\cc d)\cdot \chi )
&=
\maclA _{\flat _{\sigma}} (\cc d),
& \quad \sigma &\in (0,{\textstyle{\frac 12}}),\  \sigma _0 = \frac \sigma{1-2\sigma},
\\[1ex]
\Pi _A(\maclA _{s} (\cc d)\cdot \chi )
&=
\maclA _{s} (\cc d),
& \quad s &\in [0,{\textstyle{\frac 12}}).
\end{alignat*}
Some related properties are deduced for $\sigma =\frac 12$, as well as
when $\maclA _{\flat _\sigma}$ and
$\maclA _s$ are replaced by $\maclA _{0,\flat _\sigma}$ and
$\maclA _{0,s}$, respectively. (Cf. Theorems \ref{Thm:MainResult1}--\ref{Thm:MainResult6}
and Propositions \ref{Prop:MainResult1}--\ref{Prop:MainResult6}.)

\par

Finally, in Section \ref{sec3} we use the results in Section \ref{sec2} to
deduce characterizations of Pilipovi{\'c} spaces of small orders.

\par

\section{Preliminaries}\label{sec1}

\par

In this section we recall some basic facts. We start by discussing
Pilipovi{\'c} spaces and some of their properties.
Then we recall some facts on modulation spaces. Finally
we discuss the Bargmann transform and some of its
mapping properties, and introduce suitable classes of entire functions on
$\cc d$.

\par

\subsection{The Pilipovi{\'c} spaces}\label{subsec1.2}
The definition of Pilipovi{\'c} spaces is based on the Hermite functions,
which are given by
$$
h_\alpha (x) = \pi ^{-\frac d4}(-1)^{|\alpha |}
(2^{|\alpha |}\alpha !)^{-\frac 12}e^{\frac {|x|^2}2}
(\partial ^\alpha e^{-|x|^2}),
\quad \alpha \in \nn d.
$$
The Hermite functions are eigenfunctions for the Fourier transform, and
for the Harmonic oscillator $H_d\equiv |x|^2-\Delta$ which acts on functions
and (ultra-)distributions defined
on $\rr d$. More precisely, we have
$$
H_dh_\alpha = (2|\alpha |+d)h_\alpha .
$$

\par

It is well-known that
the set of Hermite functions is a basis of $\mascS (\rr d)$ 
and an orthonormal basis of $L^2(\rr d)$ (cf. \cite{ReSi}).
In particular, if $f\in L^2(\rr d)$, then
$$
\nm f{L^2(\rr d)}^2 = \sum _{\alpha \in \nn d}|c_h(f,\alpha )|^2,
$$
where
\begin{align}
f(x) &= \sum _{\alpha \in \nn d}c_h(f,\alpha )h_\alpha 
\label{Eq:HermiteExp}
\intertext{is the Hermite seriers expansion of $f$, and}
c_h(f,\alpha ) &= (f,h_\alpha )_{L^2(\rr d)}
\label{Eq:HermiteCoeff}
\end{align}
is the Hermite coefficient of $f$ of order $\alpha \in \rr d$.

\par

In order to define the full
scale of Pilipovi{\'c} spaces, their order $s$ should belong to
the extended set
$$
\mathbf R_\flat = \mathbf R_+\cup \sets {\flat _\sigma}{\sigma \in \mathbf R_+},
$$
of positive real numbers, with extended inequality relations as
$$
s_1<\flat _\sigma <s_2
\quad \text{and}\quad \flat _{\sigma _1}<\flat _{\sigma _2}
$$
when $s_1<\frac 12\le s_2$ and $\sigma _1<\sigma _2$. (Cf. \cite{Toft15}.)

\par

For such $r\in \rr d_+$ and $s\in \mathbf R_\flat$ we set
\begin{align}
\vartheta _{r,s}(\alpha ) &\equiv
\begin{cases}
e^{-(\frac 1{r_1}\cdot \alpha _1^{\frac 1{2s}} +\cdots
+ \frac 1{r_d}\cdot \alpha _d^{\frac 1{2s}})}, &  s\in \mathbf R_+
\setminus \{ \frac 12\},
\\[1ex]
r^{\alpha}\alpha !^{-\frac 1{2\sigma }}, &  s = \flat _\sigma ,
\\[1ex]
r^{\alpha}, &  s =\frac 12 , \qquad \alpha \in \nn d
\end{cases}
\label{varthetarsDef}
\intertext{and}
\vartheta _{r,s}'(\alpha ) &\equiv
\begin{cases}
e^{(\frac 1{r_1}\cdot \alpha _1^{\frac 1{2s}} +\cdots
+ \frac 1{r_d}\cdot \alpha _d^{\frac 1{2s}})}, & s \in \mathbf R_+\setminus \{ \frac 12\},
\\[1ex]
r^\alpha \alpha !^{\frac 1{2\sigma}}, & s=\flat _\sigma ,
\\[1ex]
r^ \alpha , & s = \frac 12  , \qquad \alpha \in \nn d.
\end{cases}
\label{varthetarsDualDef}
\end{align}

\par

\begin{defn}\label{Def:PilSpaces}
Let $s\in \overline{\mathbf R_\flat} = \mathbf R_\flat \cup \{ 0\}$,
and let $\vartheta _{r,s}$ and $\vartheta _{r,s}'$ be as in
\eqref{varthetarsDef} and \eqref{varthetarsDualDef}.
\begin{enumerate}
\item $\maclH _0(\rr d)$ consists of all Hermite polynomials, and
$\maclH _0'(\rr d)$ consists of all formal Hermite series expansions
in \eqref{Eq:HermiteExp};

\vrum

\item if $s\in \mathbf R_\flat$, then $\maclH _s(\rr d)$ ($\maclH _{0,s}(\rr d)$)
consists of all $f\in L^2(\rr d)$ such that
$$
|c_h(f,h_\alpha )| \lesssim \vartheta _{r,s}(\alpha )
$$
holds true for some $r\in \rr d_+$ (for every $r\in \rr d_+$);

\vrum

\item if $s\in \mathbf R_\flat$, then $\maclH _s'(\rr d)$ ($\maclH _{0,s}'(\rr d)$)
consists of all formal Hermite series expansions
in \eqref{Eq:HermiteExp} such that 
$$
|c_h(f,h_\alpha )| \lesssim \vartheta _{r,s}'(\alpha )
$$
holds true for every $r\in \rr d_+$ (for some $r\in \rr d_+$).
\end{enumerate}
The spaces $\maclH _s(\rr d)$ and $\maclH _{0,s}(\rr d)$ are called
\emph{Pilipovi{\'c} spaces of Roumieu respectively Beurling types} of order $s$,
and
$\maclH _s'(\rr d)$ and $\maclH _{0,s}'(\rr d)$ are called
\emph{Pilipovi{\'c} distribution spaces of Roumieu respectively Beurling types}
of order $s$.
\end{defn}

\par

\begin{rem}\label{Remark:GSHermite}
Let $\maclS _s(\rr d)$ and $\Sigma _s(\rr d)$
be the Fourier invariant Gelfand-Shilov spaces of order $s\in \mathbf R_+$
of Roumieu respective Beurling types (see \cite{Toft15} for notations).
Then it is proved in \cite{Pil1,Pil2} that
\begin{alignat*}{2}
\maclH _{0,s}(\rr d) &=\Sigma _s(\rr d)\neq \{ 0\} ,& \quad s&> \frac 12,
\\[1ex]
\maclH _{0,s}(\rr d) &\neq\Sigma _s(\rr d) = \{ 0\} ,& \ s&\le \frac 12,
\\[1ex]
\maclH _s(\rr d) &=\maclS _s(\rr d)\neq \{ 0\} ,& \quad s&\ge \frac 12
\intertext{and}
\maclH _s(\rr d) &\neq \maclS _s(\rr d)= \{ 0\} ,& \quad s&< \frac 12.
\end{alignat*}
\end{rem}

\par

In Proposition \ref{Prop:PilSpaceChar} below we give further
characterisations of Pilipovi{\'c} spaces.

\par

Next we recall the topologies for Pilipovi{\'c} spaces. Let $s\in \mathbf R_\flat$,
$r>0$, and let $\nm f{\maclH _{s;r}}$ and $\nm f{\maclH _{s;r}'}$ be given by
\begin{alignat}{2}
\nm f{\maclH _{s;r}}
&\equiv
\sup _{\alpha \in \nn d} |c_h(f,\alpha )\vartheta _{r,s}'(\alpha )| ,&
\quad 
s&\in \mathbf R_\flat 
\label{Eq:PilSpaceSemiNorm}
\intertext{and}
\nm f{\maclH _{s;r}'}
&\equiv
\sup _{\alpha \in \nn d} |c_h(f,\alpha )\vartheta _{r,s}(\alpha )| ,&
\quad 
s&\in \mathbf R_\flat ,
\label{Eq:PilDistSpaceSemiNorm}
\end{alignat}
when $f$ is a formal expansion in \eqref{Eq:HermiteExp}.
Then $\maclH _{s;r}(\rr d)$ consists of all expansions \eqref{Eq:HermiteExp}
such that $\nm f{\maclH _{s;r}}$ is finite, and $\maclH _{s;r}'(\rr d)$
consists of all expansions \eqref{Eq:HermiteExp}
such that $\nm f{\maclH _{s;r}'}$ is finite. It follows that both
$\maclH _{s;r}(\rr d)$ and $\maclH _{s;r}'(\rr d)$ are Banach spaces under
the norms $f\mapsto \nm f{\maclH _{s;r}}$ and $f\mapsto \nm f{\maclH _{s;r}'}$,
respectively.

\par

We let the topologies of $\maclH _s(\rr d)$ and $\maclH _{0,s}(\rr d)$
be the inductive respectively projective limit topology of $\maclH _{s;r}(\rr d)$
with respect to $r>0$. In the same way, the topologies of $\maclH _s'(\rr d)$
and $\maclH _{0,s}'(\rr d)$ are
the projective respectively inductive limit topology of $\maclH _{s;r}'(\rr d)$
with respect to $r>0$. It follows that all the spaces in Definition
\ref{Def:PilSpaces} are complete, and that $\maclH _{0,s}(\rr d)$
and $\maclH _s'(\rr d)$ are Fr{\'e}chet spaces with semi-norms
$f\mapsto \nm f{\maclH _{s;r}}$ and $f\mapsto \nm f{\maclH _{s;r}'}$,
respectively.

\par

The following characterisations of Pilipovi{\'c} spaces can be found in \cite{Toft15}.
The proof is therefore omitted.

\par

\begin{prop}\label{Prop:PilSpaceChar}
Let $s\in \mathbf R_+\cup \{ 0\}$ and let $f\in \maclH _0'(\rr d)$.
Then $f\in \maclH _{0,s}(\rr d)$ ($f\in \maclH _s(\rr d)$),
if and only if $f\in C^\infty (\rr d)$ and
satisfies $\nm {H_d^N f}{L^\infty}
\lesssim h^N N!^{2s}$ for every $h>0$ (for some $h>0$).
\end{prop}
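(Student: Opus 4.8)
The plan is to exploit that $H_d$ acts diagonally on the Hermite basis, $H_d^Nh_\alpha=\lambda_\alpha^Nh_\alpha$ with $\lambda_\alpha=2|\alpha|+d$, so that by self-adjointness of $H_d$ one has $(H_d^Nf,h_\alpha)_{L^2}=\lambda_\alpha^N\,c_h(f,\alpha)$. The whole statement then reduces, for fixed $\alpha$, to converting a two-sided relation between the $L^\infty$ bounds of $H_d^Nf$ across $N$ and a single exponential bound on $|c_h(f,\alpha)|$, and to comparing the resulting weight with $\vartheta_{r,s}$ from \eqref{varthetarsDef}. The computational heart is the elementary Stirling estimate that, for each $h>0$,
\[
\inf_{N\ge 0}\frac{h^NN!^{2s}}{\lambda^{N}}\asymp e^{-c(h)\,\lambda^{1/(2s)}},\qquad c(h)=2s\,h^{-1/(2s)},\quad \lambda\ge 1,
\]
equivalently $\lambda^N\le C\,e^{c(h)\lambda^{1/(2s)}}h^NN!^{2s}$ for all $N$. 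Since $h\mapsto c(h)$ is a decreasing bijection of $(0,\infty)$ onto itself, this matches the quantifier ``for every $h$'' with ``for every $r$'' (Beurling, $\maclH_{0,s}$) and ``for some $h$'' with ``for some $r$'' (Roumieu, $\maclH_s$); here I also use the dimensional comparison $\lambda_\alpha^{1/(2s)}\asymp 1+\sum_j\alpha_j^{1/(2s)}$, which reconciles the exponent $\lambda_\alpha^{1/(2s)}$ with the shape of $\vartheta_{r,s}$.

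For the direction from coefficients to $L^\infty$ bounds, suppose $|c_h(f,\alpha)|\lesssim\vartheta_{r,s}(\alpha)$ for some (every) $r$. Combining the triangle inequality with the classical polynomial bound $\nm{h_\alpha}{L^\infty}\lesssim\eabs\alpha^{M}$ for a suitable $M=M(d)$ gives
\[
\nm{H_d^Nf}{L^\infty}\le\sum_{\alpha}|c_h(f,\alpha)|\,\lambda_\alpha^N\,\nm{h_\alpha}{L^\infty}\lesssim\sum_\alpha e^{-c\lambda_\alpha^{1/(2s)}}\eabs\alpha^{M}\lambda_\alpha^{N}.
\]
Inserting the estimate for $\lambda_\alpha^N$ with a constant $c(h)<c$, the factor $e^{c(h)\lambda_\alpha^{1/(2s)}}$ is dominated by $e^{c\lambda_\alpha^{1/(2s)}}$, the residual sum $\sum_\alpha e^{-(c-c(h))\lambda_\alpha^{1/(2s)}}\eabs\alpha^{M}$ converges, and one obtains $\nm{H_d^Nf}{L^\infty}\lesssim h^NN!^{2s}$. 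The same exponential decay shows that the Hermite series and all its termwise derivatives converge uniformly, so $f\in C^\infty(\rr d)$ (in fact $f$ is rapidly decreasing).

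For the converse, assume $f\in C^\infty(\rr d)$ with $\nm{H_d^Nf}{L^\infty}\lesssim h^NN!^{2s}$ for some (every) $h$. Because the potential part $|x|^2$ of $H_d$ is present, finiteness of $\nm{H_d^Nf}{L^\infty}$ for all $N$ forces $f$ and its derivatives to decay rapidly, which legitimises the integration by parts $(H_d^Nf,h_\alpha)_{L^2}=\lambda_\alpha^N c_h(f,\alpha)$. Pairing with $h_\alpha$ and using the polynomial bound $\nm{h_\alpha}{L^1}\lesssim\eabs\alpha^{M'}$ yields
\[
\lambda_\alpha^N|c_h(f,\alpha)|=|(H_d^Nf,h_\alpha)_{L^2}|\le\nm{H_d^Nf}{L^\infty}\nm{h_\alpha}{L^1}\lesssim h^NN!^{2s}\eabs\alpha^{M'}.
\]
Taking the infimum over $N$ and invoking the Stirling estimate gives $|c_h(f,\alpha)|\lesssim\eabs\alpha^{M'}e^{-c(h)\lambda_\alpha^{1/(2s)}}$, and the polynomial prefactor is absorbed after a harmless decrease of $c(h)$, producing a bound of the form $\vartheta_{r,s}(\alpha)$ with the correct quantifier on $r$.

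It remains to dispose of the degenerate orders. For $s=\tfrac12$ the same computation applies verbatim with exponent $1/(2s)=1$, and $e^{-c\lambda_\alpha}\asymp r^\alpha$ matches $\vartheta_{r,1/2}(\alpha)=r^\alpha$. For $s=0$ the bound $\nm{H_d^Nf}{L^\infty}\lesssim h^N$ forces $\lambda_\alpha^N|c_h(f,\alpha)|\lesssim h^N\eabs\alpha^{M'}$, whence $c_h(f,\alpha)=0$ as soon as $\lambda_\alpha>h$; only finitely many coefficients survive and $f$ is a Hermite polynomial, in agreement with $\maclH_0(\rr d)$, the converse being trivial. The main obstacle is bookkeeping rather than conceptual: one must track the dependence of $c(h)$ on $h$ through the Stirling conversion so that ``for every''/``for some'' are transported correctly between $h$ and $r$, and one must verify that the polynomial factors $\nm{h_\alpha}{L^\infty}$, $\nm{h_\alpha}{L^1}$ and the passage $\lambda_\alpha^{1/(2s)}\asymp 1+\sum_j\alpha_j^{1/(2s)}$ never erode the exponential gain.
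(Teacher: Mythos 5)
Your proposal cannot be checked against a proof in the paper itself, because the paper omits the proof of Proposition \ref{Prop:PilSpaceChar} entirely and refers to \cite{Toft15}. What you wrote is essentially the standard argument from that reference: diagonalise $H_d$ on the Hermite basis, convert the family of bounds $\nm {H_d^N f}{L^\infty}\lesssim h^N N!^{2s}$ into a single coefficient bound through the Stirling computation $\inf _N h^N N!^{2s}\lambda ^{-N}\asymp e^{-2sh^{-1/(2s)}\lambda ^{1/(2s)}}$, and transport the quantifiers through the decreasing bijection $h\mapsto c(h)$ together with $\lambda _\alpha ^{1/(2s)}\asymp 1+\sum _j\alpha _j^{1/(2s)}$. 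The synthesis direction, the absorption of the polynomial factors $\nm {h_\alpha}{L^\infty}$, $\nm {h_\alpha}{L^1}$, and the degenerate case $s=0$ are all handled correctly.

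The one step you should repair is your justification of $(H_d^Nf,h_\alpha )_{L^2}=\lambda _\alpha ^N c_h(f,\alpha )$ in the converse direction. You derive it from the claim that boundedness of all $H_d^Nf$ \emph{forces $f$ and its derivatives to decay rapidly}. That claim happens to be true, but it is itself a nontrivial assertion (its natural proof uses mapping properties of $H_d^{-1}$ on $L^\infty$ and the triviality of the kernel of $H_d$ on bounded functions), so as written you justify an elementary step by an unproven harder one. The repair needs no decay whatsoever: $f$ and $H_d^Nf$ are bounded, hence tempered distributions; since $f\in C^\infty (\rr d)$, the distributional and classical actions of the polynomial-coefficient operator $H_d^N$ on $f$ coincide (integrate by parts against $C_c^\infty$ test functions, then use density of $C_c^\infty$ in $\mascS$), and therefore $\scal {H_d^Nf}{h_\alpha}=\scal f{H_d^Nh_\alpha}=\lambda _\alpha ^N\scal f{h_\alpha}$ simply because $h_\alpha \in \mascS (\rr d)$. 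A second, smaller point: at $s=\frac 12$ your ``verbatim'' claim hides a quantifier issue, since in the synthesis direction the series $\sum _\alpha r^\alpha \lambda _\alpha ^N\nm {h_\alpha}{L^\infty}$ converges only when every $r_j<1$; with the paper's literal weight $\vartheta _{r,1/2}(\alpha )=r^\alpha$, $r\in \rr d_+$, the Roumieu case must be read with $r<1$ (equivalently, bounds of the form $e^{-c\lambda _\alpha}$ with $c>0$). This is really a defect in the formulation of \eqref{varthetarsDef} rather than in your argument, but it deserves an explicit sentence.
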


\par

From now on we let
\begin{equation}\label{phidef}
\phi (x)=\pi ^{-\frac d4}e^{-\frac {|x|^2}2}.
\end{equation}

\par

\subsection{Spaces of entire functions and
the Bargmann transform}\label{subsec1.4}
Let $\Omega\subseteq \cc d$ be open. Then $A(\Omega)$ is the set of all
analytic functions in $\Omega$. If instead $\Omega\subseteq \cc d$ is
closed, then $A(\Omega)$ is the set of all functions which are analytic
in an open neighbourhood of $\Omega$. In particular, if $z_0\in \cc d$
is fixed, then $A(\{z_0\} )$ is the set of all complex-valued functions
which are defined and analytic near $z_0$.

\par

We shall now consider the Bargmann transform which is defined by the
formula
\begin{equation*}
(\mathfrak V_df)(z) =\pi ^{-\frac d4}\int _{\rr d}\exp \Big ( -\frac 12(\scal
z z+|y|^2)+2^{\frac 12}\scal zy \Big )f(y)\, dy,
\end{equation*}
when $f\in L^2(\rr d)$ (cf. \cite{B1}). Here
\begin{gather*}
\scal zw = \sum _{j=1}^dz_jw_j,\quad \text{when} \quad
z=(z_1,\dots ,z_d) \in \cc d
\\[1ex]
\text{and} \quad w=(w_1,\dots ,w_d)\in \cc d,
\end{gather*}
and otherwise $\scal \cdo \cdo $ denotes the duality between test
function spaces and their corresponding duals. It is evident that
$\mathfrak V_df$ is the entire function on $\cc d$, given by
$$
(\mathfrak V_df)(z) =\int _{\rr d}\mathfrak A_d(z,y)f(y)\, dy,
$$
or
\begin{equation}\label{bargdistrform}
(\mathfrak V_df)(z) =\scal f{\mathfrak A_d(z,\cdo )},
\end{equation}
where the Bargmann kernel $\mathfrak A_d$ is given by
$$
\mathfrak A_d(z,y)=\pi ^{-\frac d4} \exp \Big ( -\frac 12(\scal
zz+|y|^2)+2^{\frac 12}\scal zy\Big ).
$$
We note that the right-hand side in \eqref{bargdistrform} makes sense
when $f\in \maclS _{\frac 12}'(\rr d)$ and defines an element in $A(\cc d)$,
since $y\mapsto \mathfrak A_d(z,y)$ can be interpreted as an element
in $\maclS _{\frac 12} (\rr d)$ with values in $A(\cc d)$. Here and in what follows,
$A(\Omega )$ denotes the set of analytic functions on the open set
$\Omega \subseteq \cc d$.

\par

It was proved in \cite{B1} that $f\mapsto \mathfrak V_df$ is a bijective
and isometric map  from $L^2(\rr d)$ to the Hilbert space $A^2(\cc d)
\equiv B^2(\cc d)\cap A(\cc d)$, where $B^2(\cc d)$ consists of all
measurable functions $F$ on $\cc  d$ such that
\begin{equation}\label{A2norm}
\nm F{B^2}\equiv \Big ( \int _{\cc d}|F(z)|^2d\mu (z)  \Big )^{\frac 12}<\infty .
\end{equation}
Here $d\mu (z)=\pi ^{-d} e^{-|z|^2}\, d\lambda (z)$, where $d\lambda (z)$ is the
Lebesgue measure on $\cc d$. We recall that $A^2(\cc d)$ and $B^2(\cc d)$
are Hilbert spaces, where the scalar product is given by
\begin{equation}\label{A2scalar}
(F,G)_{B^2}\equiv  \int _{\cc d} F(z)\overline {G(z)}\, d\mu (z),
\quad F,G\in B^2(\cc d).
\end{equation}
If $F,G\in A^2(\cc d)$, then we set $\nm F{A^2}=\nm F{B^2}$
and $(F,G)_{A^2}=(F,G)_{B^2}$.

\par

Furthermore, Bargmann showed that there is a convenient reproducing
formula on $A^2(\cc d)$. More precisely, let
\begin{equation}\label{reproducing}
(\Pi _AF)(z) \equiv \int _{\cc d}F(w)e^{(z,w)}\, d\mu (w),
\end{equation}
when $z\mapsto F(z)e^{R|z|-|z|^2}$ belongs to $L^1(\cc d)$
for every $R\ge 0$. Here
\begin{gather*}
(z,w) = \sum _{j=1}^dz_j\overline{w_j},\quad \text{when} \quad
z=(z_1,\dots ,z_d) \in \cc d
\\[1ex]
\text{and} \quad w=(w_1,\dots ,w_d)\in \cc d,
\end{gather*}
is the scalar product of $z\in \cc d$ and $w\in \cc d$.
Then it is proved in \cite{B1,B2} that $\Pi _A$ is the orthogonal
projection of $B^2(\cc d)$ onto $A^2(\cc d)$. In particular,
$\Pi _AF =F$ when $F\in A^2(\cc d)$.

\medspace

In \cite{B1} it is also proved that
\begin{equation}\label{BargmannHermite}
\mathfrak V_dh_\alpha  = e_\alpha ,\quad \text{where}\quad
e_\alpha (z)\equiv \frac {z^\alpha}{\sqrt {\alpha !}},\quad z\in \cc d .
\end{equation}
In particular, the Bargmann transform maps the orthonormal basis
$\{ h_\alpha \}_{\alpha \in \nn d}$ of $L^2(\rr d)$ bijectively into the
orthonormal basis $\{ e_\alpha \}_{\alpha \in \nn d}$ of monomials
of $A^2(\cc d)$. Hence, there is a natural way to identify formal
Hermite series expansion by formal power series expansions
\begin{equation}\label{Eq:PowerSeriesExp}
F(z) = \sum _{\alpha \in \nn d}c(F,\alpha )e_\alpha (z),
\end{equation}
by letting the series \eqref{Eq:HermiteExp} be mapped into 
\begin{equation}\label{Eq:PowerSeriesExp2}
\sum _{\alpha \in \nn d}c_h(f,\alpha )e_\alpha (z).
\end{equation}
It follows that if $f,g\in L^2(\rr d)$ and $F,G\in A^2(\cc d)$, then
\begin{equation}\label{Scalarproducts}
\begin{aligned}
(f,g)_{L^2(\rr d)} &= \sum _{\alpha \in \nn d}c_h(f,\alpha ) \overline {c_h(g,\alpha )},
\\[1ex]
(F,G)_{A^2(\cc d)} &= \sum _{\alpha \in \nn d}c(F,\alpha ) \overline {c(G,\alpha )}.
\end{aligned}
\end{equation}
Here and in what follows, $(\cdo ,\cdo )_{L^2(\rr d)}$ and
$(\cdo ,\cdo )_{A^2(\cc d)}$ denote the scalar products in
$L^2(\rr d)$ and $A^2(\cc d)$, respectively. 
Furthermore,
\begin{equation}\label{ScalarproductsRel}
c_h(f,\alpha ) = c(F,\alpha )
\quad \text{when}\quad
F=\mathfrak V_df .
\end{equation}

\par

We now recall the following spaces of power series expansions
given in \cite{Toft15}.

\par

\begin{defn}\label{Def:PowerSeriesSpaces}
Let $s\in \overline{\mathbf R_\flat} = \mathbf R_\flat \cup \{ 0\}$,
and let $\vartheta _{r,s}$ and $\vartheta _{r,s}'$ be as in
\eqref{varthetarsDef} and \eqref{varthetarsDualDef}.
\begin{enumerate}
\item $\maclA _0(\cc d)$ consists of all analytic polynomials on $\cc d$,
and $\maclA _0'(\cc d)$ consists of all formal power series expansions
on $\cc d$ in \eqref{Eq:PowerSeriesExp};

\vrum

\item if $s\in \mathbf R_\flat$, then $\maclA _s(\cc d)$ ($\maclA _{0,s}(\cc d)$)
consists of all $F\in L^2(\cc d)$ such that
$$
|c(F,h_\alpha )| \lesssim \vartheta _{r,s}(\alpha )
$$
holds true for some $r>0$ (for every $r>0$);

\vrum

\item if $s\in \mathbf R_\flat$, then $\maclA _s'(\cc d)$ ($\maclA _{0,s}'(\cc d)$)
consists of all formal power series expansions
in \eqref{Eq:PowerSeriesExp} such that 
$$
|c(F,h_\alpha )| \lesssim \vartheta _{r,s}'(\alpha )
$$
holds true for every $r>0$ (for some $r>0$).
\end{enumerate}
\end{defn}

\par

Let $f\in \maclH _0'(\rr d)$ with formal Hermite series expansion
\eqref{Eq:HermiteExp}. Then the Bargmann transform $\mathfrak V_df$
of $f$ is defined to be the formal
power series expansion \eqref{Eq:PowerSeriesExp2}. It follows that
$\mathfrak V_d$ agrees with the earlier definition when acting on $L^2(\rr d)$,
that $\mathfrak V_d$ is linear and bijective from $\maclH _0'(\rr d)$ to
$\maclA _0'(\cc d)$, and restricts to bijections from the spaces
\begin{equation}\label{Eq:PilSpaces}
\maclH _{0,s}(\rr d),\quad \maclH _s(\rr d),\quad \maclH _s'(\rr d)
\quad \text{and}\quad
\maclH _{0,s}'(\rr d)
\end{equation}
to
\begin{equation}\label{Eq:PowerSeriesSpaces}
\maclA _{0,s}(\cc d),\quad \maclA _s(\cc d),\quad \maclA _s'(\cc d)
\quad \text{and}\quad
\maclA _{0,s}'(\cc d)
\end{equation}
respectively, when $s\in \mathbf R_\flat$. We also let the topologies of the spaces
in \eqref{Eq:PowerSeriesSpaces} be inherited from the spaces in
\eqref{Eq:PilSpaces}.

\par

If $s\in \overline {\mathbf R_\flat}$, $f\in \maclH _s(\rr d)$, $g\in \maclH _s'(\rr d)$,
$F \in \maclA _s(\cc d)$ and $G \in \maclA _s'(\cc d)$, then $(f,g)_{L^2(\rr d)}$
and $(F,G)_{A^2(\cc d)}$ are defined by the formula \eqref{Scalarproducts}.
It follows that \eqref{ScalarproductsRel} holds for such choices of $f$ and $g$.
Furthermore, the duals of $\maclH _s(\rr d)$
and $\maclA _s(\cc d)$ can be identified with $\maclH _s'(\rr d)$
and $\maclA _s'(\cc d)$, respectively, through the forms in \eqref{Scalarproducts}.
The same holds true with
\begin{alignat*}{5}
&\maclH _{0,s}, &\quad &\maclH _{0,s}' ,& \quad &\maclA _{0,s}, &\quad &\text{and}&\quad
&\maclA _{0,s}'
\intertext{in place of}
&\maclH _s, &\quad &\maclH _s' , &\quad &\maclA _s, &\quad &\text{and}&\quad
&\maclA _s',
\end{alignat*}
respectively, at each occurrence.

\par

In order to identify the spaces of power series expansions above with spaces of
analytic functions, we let
\begin{equation}\label{Eq:MrDef}
\begin{aligned}
M_{1,r,s}(z)
&=
\begin{cases}
r_1(\log \eabs {z_1})^{\frac 1{1-2s}} + \cdots +
r_d(\log \eabs {z_d})^{\frac 1{1-2s}}, &
s<\frac 12 ,
\\[1ex]
r_1|z_1|^{\frac {2\sigma}{\sigma +1}}+\cdots
+r_d|z_d|^{\frac {2\sigma}{\sigma +1}}, &
s=\flat _\sigma ,\ \sigma >0,
\\[1ex]
\frac {|z|^2}2 -(r_1|z_1|^{\frac 1s}+\cdots +r_d|z_d|^{\frac 1s}), &
s\ge \frac 12 ,
\end{cases}
\\[1ex]
M_{1,r,s}^0(z)
&=
\begin{cases}
M_{1,r,s}(z), & s\neq \frac 12,
\\[1ex]
r_1|z_1|^2+\cdots +r_d|z_d|^2, & s=\frac 12,
\end{cases}
\\[1ex]
M_{2,r,s}(z)
&=
\begin{cases}
r_1|z_1|^{\frac {2\sigma}{\sigma -1}}+\cdots
+r_d|z_d|^{\frac {2\sigma}{\sigma -1}}, &
s=\flat _\sigma ,\ \sigma >1,
\\[1ex]
\frac {|z|^2}2 +(r_1|z_1|^{\frac 1s}+\cdots +r_d|z_d|^{\frac 1s}), &
s\ge \frac 12 ,
\end{cases}
\\[1ex]
M_{2,r,s}^0(z)
&=
\begin{cases}
M_{2,r,s}(z), & s\neq \frac 12,
\\[1ex]
r_1|z_1|^2+\cdots +r_d|z_d|^2, & s=\frac 12,
\end{cases}
\end{aligned}
\end{equation}
%
%
%
%
%
%
%
%
when $r\in \rr d_+$ and $z\in \cc d$. For conveniency
we set $M_r=M_{1,r,\flat _1}$.

\par

By \cite{Toft15} we have the following. The proof is therefore omitted.

\par

\begin{prop}\label{Prop:AnFuncSpaceChar}
Let $M_{1,r,s}$, $M_{1,r,s}^0$, $M_{2,r,s}$ and $M_{2,r,s}^0$ be as in
\eqref{Eq:MrDef} when $s\in \overline{\mathbf R_\flat}$ and $r\in \rr d_+$.
Then
\begin{alignat*}{2}
\maclA _{0,s}(\cc d)
&=
\sets {F\in A(\cc d)}{Fe^{-M_{1,r,s}^0}\in L^\infty (\cc d)\
\text{for every}\ r\in \rr d_+},&\quad s&>0,
\\[1ex]
\maclA _s(\cc d)
&=
\sets {F\in A(\cc d)}{Fe^{-M_{1,r,s}}\in L^\infty (\cc d)\
\text{for some}\ r\in \rr d_+},&\quad s&>0,
\\[1ex]
\maclA _s'(\cc d)
&=
\sets {F\in A(\cc d)}{Fe^{-M_{2,r,s}}\in L^\infty (\cc d)\
\text{for every}\ r\in \rr d_+},
&\quad s &> \flat _1,
\\[1ex]
\maclA _{0,s}'(\cc d)
&=
\sets {F\in A(\cc d)}{Fe^{-M_{2,r,s}^0}\in L^\infty (\cc d)\
\text{for some}\ r\in \rr d_+},
&\quad s &> \flat _1,
\\[1ex]
\maclA _{\flat _1}'(\cc d) &= A(\cc d)
\quad \text{and}\quad
\maclA _{0,\flat _1}'(\cc d) = A(\{ 0\}). & &
\end{alignat*}
\end{prop}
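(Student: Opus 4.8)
The plan is to reduce everything to a one–dimensional, two–sided correspondence between the size of the power–series coefficients $c(F,\alpha)$ and the pointwise growth of $F$, and then to assemble the $d$–dimensional statement by tensorisation. The key structural observation is that every exponent in \eqref{Eq:MrDef} is \emph{additive} in the coordinates, $M_{\bullet ,r,s}(z)=\sum_{j=1}^d m_{r_j,s}(z_j)$, while every coefficient weight in \eqref{varthetarsDef}, \eqref{varthetarsDualDef} is \emph{multiplicative}, $\vartheta_{r,s}(\alpha)=\prod_{j=1}^d w_{r_j,s}(\alpha_j)$ and likewise for $\vartheta'_{r,s}$. Since $e_\alpha(z)=\prod_j z_j^{\alpha_j}/\sqrt{\alpha_j!}$, the majorant $\sum_\alpha|c(F,\alpha)|\,|z^\alpha|/\sqrt{\alpha!}$ factorises into one–variable sums and the recovery of coefficients below can be carried out one variable at a time (using the polydisc Cauchy formula). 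Hence it suffices to prove each equivalence for $d=1$, and I will describe that case.

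First I would prove the passage from coefficient bounds to growth. Writing $F(z)=\sum_{n\ge 0}c_n z^n/\sqrt{n!}$ and inserting $|c_n|\le Cw(n)$ gives $|F(z)|\le C\sum_n w(n)|z|^n/\sqrt{n!}$. Using Stirling's formula for $\sqrt{n!}$ I would locate the maximal term by the Laplace (saddle–point) method and show the sum is comparable to its maximal term up to subexponential factors. This reproduces exactly the exponents in \eqref{Eq:MrDef}: for $s\ge\frac12$ the saddle sits at $n\sim|z|^2$ and yields $\frac{|z|^2}2-r'|z|^{1/s}$; for $s=\flat_\sigma$ the series is of Mittag–Leffler type and grows like $e^{r'|z|^{2\sigma/(\sigma+1)}}$; and for $s<\frac12$ the saddle sits at the much smaller value $n\sim(\log\eabs z)^{2s/(1-2s)}$ and yields the logarithmic exponent $r'(\log\eabs z)^{1/(1-2s)}$. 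The dual weights $\vartheta'$ are treated by the same computation run with growing coefficients, producing the exponents $M_{2,r,s}$.

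Next I would prove the converse, from growth to coefficients, through the Cauchy estimates $|c_n|\le\sqrt{n!}\,R^{-n}\sup_{|w|=R}|F(w)|$, valid for every $R>0$ because $c_n/\sqrt{n!}$ is the $n$–th Taylor coefficient of $F$. Substituting $\sup_{|w|=R}|F(w)|\le Ce^{m(R)}$ gives $|c_n|\le C\sqrt{n!}\,\inf_{R>0}R^{-n}e^{m(R)}$, and evaluating this infimum by the same saddle balance as above returns a coefficient bound matching the hypothesis. (For the spaces $\maclA_s$, $\maclA_{0,s}$, which lie in $A^2(\cc d)$, one may equivalently use the reproducing formula $c_n=(F,e_n)_{A^2}$.) The decisive bookkeeping here is that the correspondence $r\leftrightarrow r'$ between the growth constant and the coefficient constant is a bijection of $(0,\infty)$, so that each quantifier type is preserved: ``for some $r$'' corresponds to ``for some $r$'' (the Roumieu spaces $\maclA_s$, $\maclA_{0,s}'$) and ``for every $r$'' to ``for every $r$'' (the Beurling spaces $\maclA_{0,s}$, $\maclA_s'$).

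Finally I would dispose of the borderline order $s=\flat_1$ directly. For $\maclA_{\flat_1}'$ the coefficient condition reads $|c_n|\lesssim r^n\sqrt{n!}$ for every $r>0$, equivalently $\limsup_n|c_n/\sqrt{n!}|^{1/n}=0$, which says precisely that $\sum_n(c_n/\sqrt{n!})z^n$ has infinite radius of convergence, i.e. $F\in A(\cc d)$; the condition for $\maclA_{0,\flat_1}'$ holds for \emph{some} $r$, i.e. the radius of convergence is positive, which is exactly $F\in A(\{0\})$. For the remaining distribution spaces $\maclA_s'$, $\maclA_{0,s}'$ with $s>\flat_1$ I would first note that the admissible growth forces the series to converge to an entire function, and then apply the two estimates above. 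The main obstacle is the sharp, uniform execution of the Laplace asymptotics with the constants tracked finely enough to preserve the quantifier structure in both directions; the logarithmic regime $s<\frac12$, where the saddle grows only like a power of $\log\eabs z$, and the borderline $s=\frac12$, where the leading term $\frac{|z|^2}2$ exactly balances the $\sqrt{n!}$ factor, are the places demanding the most careful separate treatment.
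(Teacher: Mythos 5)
Your overall architecture is sound, and it is in fact the same machinery as in \cite{Toft15}, which the paper cites in place of giving any proof of this proposition: reduction to $d=1$ via the multiplicative/additive structure of $\vartheta _{r,s}$ and $M_{\cdot ,r,s}$, max-term (Laplace) estimates for the passage from coefficient bounds to growth, polydisc Cauchy estimates for the converse, and the radius-of-convergence identification of $\maclA _{\flat _1}'(\cc d)=A(\cc d)$ and $\maclA _{0,\flat _1}'(\cc d)=A(\{ 0\})$. For $s=\flat _\sigma$, for real $s\in (0,\frac 12)$ and for real $s>\frac 12$ your outline would go through essentially as described, and your treatment of the $\flat _1$ endpoint is complete and correct.

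The genuine gap is at $s=\frac 12$, and it is not a detail that can be deferred: your two uniform claims are false exactly there, and this case is the sole reason the statement distinguishes $M_{1,r,s}$ from $M_{1,r,s}^0$ and $M_{2,r,s}$ from $M_{2,r,s}^0$ --- a distinction your proposal never mentions. At $s=\frac 12$ the coefficient weight is geometric, and the one-variable correspondence your saddle-point computation yields is $|c_n|\lesssim \rho ^n \Longleftrightarrow |F(z)|\lesssim e^{\rho ^2|z|^2/2}$ (up to harmless factors); the parameter map $\rho \mapsto \rho ^2/2$ is a bijection of $(0,1)$ onto $(0,\frac 12)$, \emph{not} of $(0,\infty )$ onto itself. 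Consequently: (a) for the Beurling space $\maclA _{0,1/2}(\cc d)$, your formula ``$\frac {|z|^2}2-r'|z|^{1/s}$ for every $r'$'' demands $|F(z)|\lesssim e^{(\frac 12 -r')|z|^2}$ for all $r'>0$, which by Liouville's theorem forces $F\equiv 0$; the proposition instead uses $M_{1,r,1/2}^0(z)=r|z|^2$, i.e. $|F(z)|\lesssim e^{\ep |z|^2}$ for every $\ep >0$, and to obtain this from ``every $\rho$'' one must track that $\rho \to 0$ corresponds to $\ep \to 0$, which is precisely what your quantifier-preservation argument over $(0,\infty )$ does not do. (b) For the Roumieu space $\maclA _{1/2}(\cc d)$, growth of the form $e^{(\frac 12 -r)|z|^2}$, $r>0$, corresponds only to coefficient bases $\rho <1$; a proof must therefore also observe that $r^\alpha$ in \eqref{varthetarsDef} is to be read as $e^{-(\alpha _1/r_1+\cdots +\alpha _d/r_d)}$, consistently with the formula for real $s\neq \frac 12$, since if bases $\rho \ge 1$ were admitted then every $F\in A^2(\cc d)$ would have admissible coefficients and the claimed identity would fail. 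The same bifurcation recurs in the dual pair: $\maclA _{1/2}'(\cc d)$ (``every $r$'') is characterized by $M_{2,r,1/2}(z)=(\frac 12 +r)|z|^2$, while $\maclA _{0,1/2}'(\cc d)$ (``some $r$'') needs $M_{2,r,1/2}^0(z)=r|z|^2$. Until your scheme produces these four structurally different answers from the single geometric weight --- rather than one formula for all $s\ge \frac 12$ --- it does not prove the proposition as stated.
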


\par

\medspace

Next we recall the link between the Bargmann transform
and certain types of short-time Fourier transforms.
The short-time Fourier transform of $f\in \maclS _{1/2}'(\rr d)$
with respect to the window
$\phi \in \maclS _{1/2}(\rr d)\setminus 0$
is defined by
$$
V_\phi f (x,\xi )\equiv \scal f{\overline{\phi (\cdo -x)}e^{-i\scal \cdo \xi}}.
$$
We assume from now on that $\phi$ is given by \eqref{phidef},

\par

Let $S$ be the dilation operator, defined by
\begin{equation}\label{Sdef}
(SF)(x,\xi ) = F(2^{-\frac 12}x,-2^{-\frac 12}\xi ),
\end{equation}
when $F\in L^1_{loc}(\rr {2d})$. Then it
follows by straight-forward computations that
\begin{multline}\label{bargstft1}
(\mathfrak{V} _d f)(z)  =  (\mathfrak{V} _df)(x+\im \xi ) 
=  (2\pi )^{\frac d2}e^{\frac 12(|x|^2+|\xi|^2)}e^{-i\scal x\xi}
V_\phi f(2^{\frac 12}x,-2^{\frac 12}\xi )
\\[1ex]
=(2\pi )^{\frac d2}e^{\frac 12(|x|^2+|\xi|^2)}e^{-i\scal x\xi}(S^{-1}(V_\phi f))(x,\xi ),
\end{multline}
or equivalently,
\begin{multline}\label{bargstft2}
V_\phi f(x,\xi )  =  
(2\pi )^{-\frac d2} e^{-\frac 14(|x|^2+|\xi |^2)}e^{-\im \scal x \xi /2}(\mathfrak{V} _df)
(2^{-\frac 12}x,-2^{-\frac 12}\xi)
\\[1ex]
=(2\pi )^{-\frac d2}e^{-i\scal x\xi /2}S(e^{-\frac {|\cdo |^2}2}(\mathfrak{V} _df))(x,\xi ).
\end{multline}
We observe that \eqref{bargstft1} and \eqref{bargstft2}
can be formulated as
\begin{equation*}
\mathfrak V_d = U_{\mathfrak V}\circ V_\phi ,\quad \text{and}\quad
U_{\mathfrak V}^{-1} \circ \mathfrak V_d =  V_\phi ,
\end{equation*}
where $U_{\mathfrak V}$ is the linear, continuous and bijective operator on
$\mathscr D'(\rr {2d})\simeq \mathscr D'(\cc d)$, given by
\begin{equation}\label{UVdef}
(U_{\mathfrak V}F)(x,\xi ) = (2\pi )^{\frac d2} e^{\frac 12(|x|^2+|\xi |^2)}e^{-i\scal x\xi}
F(2^{\frac 12}x,-2^{\frac 12}\xi ) .
\end{equation}

\par

Let $D_{d,r}(z_0)$ be the polydisc
$$
\sets {z=(z_1,\dots ,z_d)\in \cc d}{|z_j-z_{0,j}|< r_j,\ j=1,\dots ,d}
$$
with center and radii given by
$$
z_0 =(z_{0,1},\dots ,z_{0,d}) \in \cc d
\quad \text{and}\quad
r=(r_1,\dots ,r_d)\in \rr d_+.
$$
Then
$$
A(\cc d) = \cap _{r\in \rr d_+} A(D_{d,r}(z)),\qquad
A(\{0\})  =\cup _{r\in \rr d_+} A(D_{d,r}(z_0)).
$$

\par

\subsection{Hilbert spaces of power series expansions
and analytic functions}

\par

The spaces in Definition \ref{Def:PowerSeriesSpaces} can also be dscribed
by related unions and intersections of Hilbert spaces of analytic functions and
power series expansions as follows. (See also \cite{Toft15}.)

\par

A weight $\omega$ on a Borel set $\Omega$ is a positive function in
$L^\infty _{loc}(\Omega )$ such that $1/\omega$ belongs to
$L^\infty _{loc}(\Omega )$. Let $\vartheta$ be a weight on $\nn d$,
$\omega$ be a weight on $\cc d$, and let
\begin{align}
\nm F{\maclA _{[\vartheta ]}^2(\cc d)}
&\equiv 
\left (
\sum _{\alpha \in \nn d}
|c(F,\alpha )\vartheta (\alpha )|^2
\right )^{\frac 12},
\intertext{when $F \in \maclA _0'(\cc d)$ is given by \eqref{Eq:PowerSeriesExp}, and}
\nm F{A^2_{(\omega )}(\cc d) }
&\equiv
\left (
\int _{\cc d}|F(z)\omega (2^{\frac 12}\overline z)|^2\, d\mu (z)
\right )^{\frac 12},
\end{align}
when $F\in A(\cc d)$. We let $\maclA _{[\vartheta ]}^2(\cc d)$ be the set
of all $F\in \maclA _0'(\cc d)$ such that
$\nm F{\maclA _{[\vartheta ]}^2}$ is finite, and $A^2_{(\omega )}(\cc d)$
be the set of all $F\in A(\cc d)$ such that $\nm F{A^2_{(\omega )}}$ is finite.
It follows that these spaces are Hilbert spaces under these norms.

\par

If $\vartheta$ and $\omega$ are related to each others as
\begin{alignat}{2}
\vartheta (\alpha )
&=
\left (  
\frac {1}{\alpha !} \int _{\mathbf R_+^d} \omega _0(r)^2r^\alpha
\, dr  
\right )
^{\frac 12} ,& \quad \alpha &\in \nn d 
\label{omegavarthetaRel}
\intertext{and}
\omega (z)
&=
e^{\frac {|z|^2}2}\omega _0(|z_1|^2,\dots ,|z_d|^2), & \quad z\in \cc d,
\label{omega0omegaRel}
\end{alignat}
for some suitable weight $\omega _0$ on $\mathbf R_+^d$, then
the following multi-dimensional version of \cite[Theorem (4.1)]{JaEi}
shows that $\maclA _{[\vartheta ]}^2(\cc d) 
= A^2_{(\omega )}(\cc d)$ with equal norms. Here we identify
entire functions with their power series expansions at origin.

\par

\begin{thm}\label{cA2sCharExt}
Let $e_\alpha$ be as in \eqref{BargmannHermite}, $\alpha \in \nn d$,
and let $\omega _0$ be a positive measurable function on $\mathbf R_+^d$.
Also let $\vartheta$ and $\omega$ be weights on $\nn d$ and $\cc d$,
respectively, related to each others by \eqref{omegavarthetaRel} and
\eqref{omega0omegaRel}, and such that
\begin{equation}\label{varthetaCond}
\frac {r^{|\alpha |}}{(\alpha !)^{\frac 12}}\lesssim \vartheta (\alpha ),\quad
\alpha \in \nn d,
\end{equation}
holds for every $r>0$.
Then $\maclA ^2_{[\vartheta ]}(\cc d)=A^2_{(\omega )}(\cc d)$
with equality in norms. 
\end{thm}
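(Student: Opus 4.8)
The plan is to show that the monomials $e_\alpha$, $\alpha\in\nn d$, form an orthogonal system in $A^2_{(\omega)}(\cc d)$ whose norms are exactly $\nm{e_\alpha}{A^2_{(\omega)}}=\vartheta(\alpha)$, and then to transfer this identity to the coefficient side. Since every element of $\maclA _0'(\cc d)$ is by definition the formal power series $F=\sum_\alpha c(F,\alpha)e_\alpha$ in \eqref{Eq:PowerSeriesExp}, once orthogonality and the norm identity are established the two norms agree term by term, and the asserted equality of spaces and norms follows by a completeness argument. I therefore split the proof into three steps: (i) orthogonality of $\{e_\alpha\}$ in $A^2_{(\omega)}$; (ii) the computation $\nm{e_\alpha}{A^2_{(\omega)}}^2=\vartheta(\alpha)^2$; and (iii) the identification of the two Hilbert spaces.

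\textbf{Orthogonality and the norm of $e_\alpha$.} First I would write out the inner product $(e_\alpha,e_\beta)_{A^2_{(\omega)}}$ and pass to polar coordinates $z_j=\rho_je^{i\theta_j}$ in each complex variable, so that $d\lambda(z)=\prod_j\rho_j\,d\rho_j\,d\theta_j$. By \eqref{omega0omegaRel} the weight factor $|\omega(2^{\frac12}\overline z)|^2$ depends on $z$ only through the moduli $|z_1|,\dots,|z_d|$, hence is invariant under each rotation $\theta_j\mapsto\theta_j+c$. Writing $e_\alpha(z)\overline{e_\beta(z)}=(\alpha!\,\beta!)^{-\frac12}\prod_j\rho_j^{\alpha_j+\beta_j}e^{i(\alpha_j-\beta_j)\theta_j}$, the angular integrals $\int_0^{2\pi}e^{i(\alpha_j-\beta_j)\theta_j}\,d\theta_j$ vanish unless $\alpha_j=\beta_j$ for every $j$, giving $(e_\alpha,e_\beta)_{A^2_{(\omega)}}=0$ for $\alpha\neq\beta$. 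For $\alpha=\beta$ the remaining radial integral, after the substitution $r_j=2|z_j|^2$ which combines the Gaussian in $d\mu$ with the exponential factor of $\omega$ into the measure appearing in \eqref{omegavarthetaRel}, collapses to $\nm{e_\alpha}{A^2_{(\omega)}}^2=\frac1{\alpha!}\int_{\mathbf R_+^d}\omega_0(r)^2r^\alpha\,dr=\vartheta(\alpha)^2$. All interchanges of $\sum$ and $\int$ are justified by Tonelli's theorem, the integrands being non-negative. This is the computational heart of the argument.

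\textbf{Identification of the spaces.} Condition \eqref{varthetaCond} enters precisely here. If $F$ has finite $\maclA^2_{[\vartheta]}$-norm, then $|c(F,\alpha)|\lesssim\vartheta(\alpha)^{-1}$, and \eqref{varthetaCond} gives $\vartheta(\alpha)^{-1}\lesssim(\alpha!)^{\frac12}r^{-|\alpha|}$ for every $r>0$; hence $\sum_\alpha|c(F,\alpha)|\,|e_\alpha(z)|\lesssim\sum_\alpha r^{-|\alpha|}|z^\alpha|$ converges locally uniformly on $\cc d$, so $F\in A(\cc d)$ and $\maclA^2_{[\vartheta]}(\cc d)\subseteq A(\cc d)$. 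Conversely, for $F\in A^2_{(\omega)}(\cc d)$ I would integrate its locally uniformly convergent Taylor series term by term against $e_\alpha$ to see that the Fourier coefficient of $F$ with respect to the orthogonal system $\{e_\alpha\}$ is its Taylor coefficient $c(F,\alpha)$; Bessel's inequality then yields $\sum_\alpha|c(F,\alpha)\vartheta(\alpha)|^2\le\nm F{A^2_{(\omega)}}^2<\infty$, i.e. $F\in\maclA^2_{[\vartheta]}$ with $\nm F{\maclA^2_{[\vartheta]}}\le\nm F{A^2_{(\omega)}}$. Equality, together with the reverse inclusion, follows once the Taylor partial sums are shown to converge to $F$ in $A^2_{(\omega)}$; this rests on the completeness of $\{e_\alpha\}$ in $A^2_{(\omega)}$ and on the continuity of the point evaluations $F\mapsto F(z)$, which guarantees that $A^2_{(\omega)}$-convergence forces locally uniform convergence and so identifies the limit with the Taylor expansion.

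\textbf{Main obstacle.} I expect the difficulty to be twofold. On the computational side it is the exact matching in step (ii): verifying that the $2^{\frac12}$-dilation and conjugation in $\omega(2^{\frac12}\overline z)$, the exponential in \eqref{omega0omegaRel}, and the Gaussian in $d\mu$ combine to reproduce the normalising integral \eqref{omegavarthetaRel} with the correct constants and no residual growth in $r$. On the conceptual side it is the completeness and density step in (iii): establishing continuity of the point evaluations and completeness of the monomial system in the possibly fast-growing weighted space $A^2_{(\omega)}$, which is exactly what upgrades Bessel's inequality to the equality of norms. The one-dimensional case is \cite{JaEi}, so the genuinely new point is that $\omega_0$ need not factor over the variables; this is handled precisely because the orthogonality in step (i) uses only the separate radial symmetry of $\omega$ in each $z_j$.
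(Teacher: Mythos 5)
Your proposal is necessarily a different route from the paper's, because the paper gives no proof at all: it quotes the statement as an immediate consequence of \cite[Theorem 3.5]{Toft15}. What you propose --- orthogonality of the monomials from the separate radial symmetry of the weight, evaluation of $\nm{e_\alpha}{A^2_{(\omega )}}$, then Bessel plus a completeness argument --- is the standard direct proof, essentially that of \cite[Theorem (4.1)]{JaEi} in several variables. Steps (i) and (iii) are sound as sketched: the angular integrals, Tonelli, continuity of point evaluations (available since $\omega$ and $1/\omega$ are locally bounded), and a Fatou argument on the tails of the Taylor partial sums to upgrade Bessel to Parseval can all be carried through.

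The gap is in step (ii), precisely at what you call the computational heart: with the paper's stated conventions the constants do \emph{not} collapse. From \eqref{omega0omegaRel},
\[
\omega (2^{\frac 12}\overline z)=e^{|z|^2}\,\omega _0(2|z_1|^2,\dots ,2|z_d|^2),
\]
so the density defining the $A^2_{(\omega )}$-norm is
\[
|\omega (2^{\frac 12}\overline z)|^2e^{-|z|^2}
=e^{+|z|^2}\,\omega _0(2|z_1|^2,\dots ,2|z_d|^2)^2 ,
\]
i.e. the Gaussian of $d\mu$ and the exponential of $\omega$ do not cancel but leave a growing factor $e^{|z|^2}$. Your substitution $r_j=2|z_j|^2$ therefore yields
\[
\nm{e_\alpha}{A^2_{(\omega )}}^2
=\frac {2^{-d-|\alpha |}}{\alpha !}\int _{\mathbf R_+^d}
r^\alpha e^{|r|/2}\,\omega _0(r)^2\, dr ,
\]
not $\vartheta (\alpha )^2$. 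The discrepancy is not cosmetic: in $d=1$ with $\omega _0(r)^2=e^{-2r}$ (all hypotheses, including \eqref{varthetaCond}, hold) one gets $\nm{e_\alpha}{A^2_{(\omega )}}^2=3^{-\alpha -1}$ while $\vartheta (\alpha )^2=2^{-\alpha -1}$; and with $\omega _0(r)^2=e^{-\sqrt r}$ the monomials do not even lie in $A^2_{(\omega )}$, although they lie in $\maclA ^2_{[\vartheta ]}$. So under the literal definitions the asserted equality of spaces and norms fails, and any proof must first repair the normalisation. The repair is visible elsewhere in the paper: the proof of Lemma \ref{Lemma:EstFtimesEfunction} tacitly pairs \eqref{omegavarthetaRel} with the weight $\omega (\overline z)$, without the $2^{\frac 12}$-dilation, for which
\[
|\omega (\overline z)|^2e^{-|z|^2}=\omega _0(|z_1|^2,\dots ,|z_d|^2)^2 ,
\]
and then, with the substitution $r_j=|z_j|^2$, your computation collapses exactly to \eqref{omegavarthetaRel}. (Equivalently, keep the dilation and replace \eqref{omega0omegaRel} by $\omega (z)=e^{|z|^2/4}\omega _0(|z_1|^2/2,\dots ,|z_d|^2/2)$.) Since equality of norms is the entire content of the theorem, your write-up must carry out this bookkeeping --- or explicitly flag the misprint --- rather than assert that the factors ``combine into the measure appearing in \eqref{omegavarthetaRel}''; as stated, that assertion is false.
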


\par

We omit the proof of Theorem \ref{cA2sCharExt},
since the result is an immediate consequence of
\cite[Theorem 3.5]{Toft15}.

\par

In our situation, the involved weights should satisfy a
split condition. In one
dimension, \eqref{omegavarthetaRel}, \eqref{omega0omegaRel} and
\eqref{varthetaCond} take the forms
\begin{alignat}{2}
\vartheta _j(\alpha _j)
&=
\left ( 
\frac {1}{\alpha _j!} \int _{\mathbf R_+} \omega _{0,j}(r)^2r^{\alpha _j}
\, dr  
\right )
^{\frac 12},&\quad \alpha _j&\in \mathbf N,\tag*{(\ref{omegavarthetaRel})$'$}
\\[1ex]
\omega _j(z_j)
&=
e^{\frac {|z_j|^2}2}\omega _{0,j}(|z_j|^2),&\quad z_j &\in \mathbf C
\tag*{(\ref{omega0omegaRel})$'$}
\intertext{and}
\frac {r^{|\alpha _j|}}{(\alpha _j!)^{\frac 12}} &\lesssim \vartheta _j(\alpha _j),&
\quad r>0,\ \alpha _j &\in \mathbf N.
\tag*{(\ref{varthetaCond})$'$}
\end{alignat}

\par

\begin{lemma}\label{Lemma:SplitWeights}
For $j=1,\dots ,d$, let $\omega _{0,j}$ be weights on $\overline{\mathbf R_+}$,
$\omega _j$ be weights on $\mathbf C$ and $\vartheta _j$ be weights on
$\mathbf N$ such that
\eqref{omegavarthetaRel}$'$--\eqref{varthetaCond}$'$ hold, $j=1,\dots ,d$,
and set $\omega _0(z)\equiv \prod_{j=1}^d \omega _{0,j}(z_j)$,
$z\equiv (z_1, \ldots, z_d) \in \cc d$. If $\vartheta$ and $\omega$ are given by
\eqref{omegavarthetaRel}, and \eqref{omega0omegaRel}, then
\begin{alignat}{2}
\vartheta (\alpha) &= \prod_{j=1}^d
\vartheta_j(\alpha _j),&
\qquad
\alpha &= (\alpha _1, \dots, \alpha _d) \in \nn d
\label{Eq:TensorWeights1}
\intertext{and}
\omega (z) &= \prod_{j=1}^d \omega _j(z_j),&
\qquad
z &= (z_1,\dots ,z_d)\in \cc d.
\label{Eq:TensorWeights2}
    \end{alignat}
\end{lemma}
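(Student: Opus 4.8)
The plan is to prove the two product formulas independently, each being a direct unravelling of the way the multi-dimensional weights are assembled from their one-dimensional constituents; there is no deep content here, only a careful factorisation. I would begin with \eqref{Eq:TensorWeights2}, which is the quicker of the two. By \eqref{omega0omegaRel} we have $\omega(z) = e^{|z|^2/2}\,\omega_0(|z_1|^2,\dots,|z_d|^2)$, and since $|z|^2 = |z_1|^2 + \cdots + |z_d|^2$ the exponential splits as $e^{|z|^2/2} = \prod_{j=1}^d e^{|z_j|^2/2}$. The defining product $\omega_0(z) = \prod_{j=1}^d \omega_{0,j}(z_j)$ gives $\omega_0(|z_1|^2,\dots,|z_d|^2) = \prod_{j=1}^d \omega_{0,j}(|z_j|^2)$, and invoking the one-dimensional relation \eqref{omega0omegaRel}$'$ termwise yields $\omega(z) = \prod_{j=1}^d e^{|z_j|^2/2}\omega_{0,j}(|z_j|^2) = \prod_{j=1}^d \omega_j(z_j)$, which is \eqref{Eq:TensorWeights2}.

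For \eqref{Eq:TensorWeights1} I would square and use \eqref{omegavarthetaRel} to write $\vartheta(\alpha)^2 = \frac{1}{\alpha!}\int_{\mathbf R_+^d}\omega_0(r)^2 r^\alpha\,dr$, then observe that every ingredient splits into a $d$-fold product of single-variable factors: $\alpha! = \prod_{j=1}^d \alpha_j!$, $r^\alpha = \prod_{j=1}^d r_j^{\alpha_j}$, and $\omega_0(r)^2 = \prod_{j=1}^d \omega_{0,j}(r_j)^2$ by the definition of $\omega_0$. Thus the integrand over $\mathbf R_+^d$ is a product of functions of the separate variables $r_j$. Since each factor $\omega_{0,j}(r_j)^2\, r_j^{\alpha_j}$ is non-negative and measurable, Tonelli's theorem lets me factor the integral as $\prod_{j=1}^d \int_{\mathbf R_+}\omega_{0,j}(r_j)^2 r_j^{\alpha_j}\,dr_j$. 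Dividing by $\alpha! = \prod_{j=1}^d \alpha_j!$ and comparing each factor with \eqref{omegavarthetaRel}$'$ gives $\vartheta(\alpha)^2 = \prod_{j=1}^d \vartheta_j(\alpha_j)^2$, and taking positive square roots — legitimate since the $\vartheta_j$ are weights on $\mathbf N$, hence strictly positive — produces \eqref{Eq:TensorWeights1}.

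Since the argument is bookkeeping, I do not anticipate a genuine obstacle; the only point deserving a word of care is the interchange of integration order in the second step. Because the integrand is non-negative, Tonelli applies unconditionally, and the finiteness of each one-dimensional integral — guaranteed because the $\vartheta_j$ are finite weights satisfying \eqref{omegavarthetaRel}$'$ — shows that the product, and hence $\vartheta(\alpha)$, is finite, consistent with $\vartheta$ being a weight on $\nn d$. I note finally that the lower bound \eqref{varthetaCond}$'$ plays no role in the factorisation itself; it merely records that the resulting tensored weights fall within the admissible class required to apply Theorem \ref{cA2sCharExt}.
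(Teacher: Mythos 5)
Your proposal is correct and follows essentially the same route as the paper: both results are obtained by factoring the integrand in \eqref{omegavarthetaRel} into single-variable factors and splitting the $d$-dimensional integral into a product of one-dimensional integrals, with \eqref{Eq:TensorWeights2} being a direct unravelling of the definitions. The only cosmetic difference is that you justify the splitting by Tonelli's theorem (non-negativity), deriving finiteness from the hypothesis that the $\vartheta_j$ are weights, whereas the paper first secures integrability of $\omega_{0,j}^2\cdot r^{\alpha_j}$ by citing \cite[Theorem 3.5]{Toft15} and then applies Fubini's theorem; both justifications are valid.
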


\par

\begin{proof}
	By \cite[Theorem 3.5]{Toft15} and its proof, it follows that
	$\omega_{0,j}\cdot r^{\alpha _j}
    \in L^1(\mathbf{R}_+)$ for all $j \in \{1, \ldots, d\}$
    and $\alpha_j \in \mathbf{N}$. Hence, Fubini's theorem
    gives
    \begin{multline*}
    	\vartheta (\alpha) =
         \left( 
         \frac{1}{\alpha!} \int_{\mathbf{R}^d_+}
         \omega _0(r)^2 r^{\alpha}\, dr 
         \right)
         ^{\frac 12}
         = \left( 
         \frac{1}{\alpha!}\prod _{j=1}^d
         \int_0^{\infty}
         \omega_{0,j}(r_j)^2
         r_j^{\alpha _j}\, dr_j
         \right)
         ^{\frac 12}
       \\[1ex]
         =  \prod_{j=1}^d \vartheta_{j}(\alpha _j),
    \end{multline*}
and \eqref{Eq:TensorWeights1} follows. The assertion \eqref{Eq:TensorWeights2}
follows from the definitions.
\end{proof}

\par

\subsection{A test function space introduced by Gr{\"o}chenig}

\par

In this section we recall some comparison results deduced in \cite{Toft15},
between a test function space, $\maclS _C(\rr d)$, introduced by Gr{\"o}chenig
in \cite{Gc2} to handle modulation spaces with elements in
spaces of ultra-distributions.

\par

The definition of $\maclS _C(\rr d)$ is given as follows. Here we notice that the
continuity of the map $V_\phi$ from $\maclS _{1/2}(\rr d)$ to
$\maclS _{1/2}(\rr {2d})$ implies that its dual, $V_\phi ^*$ from
$\maclS _{1/2}'(\rr {2d})$ to $\maclS _{1/2}'(\rr d)$ is continuous.

\par

\begin{defn}\label{Def:SCSG}
Let $\phi (x)=\pi ^{-\frac d4}e^{-\frac {|x|^2}2}$. Then
$\maclS _C(\rr d)$ and $\maclS _G(\rr d)$ consist of all
$f\in \mascS '(\rr d)$ such that $f=V_\phi ^*F$, for some
$F\in L^\infty (\rr d)\cap \mascE '(\rr d)$ and
$F\in \mascE '(\rr d)$, respectively.
\end{defn}

\par

It follows that $f\in \maclS _C(\rr d)$, if and only if
\begin{equation}\label{Eq:fSTFTAdjDef}
f(x) = (2\pi )^{-\frac d2}\iint _{\rr {2d}} F(y,\eta )e^{-\frac 12|x-y|^2}
e^{i\scal x\eta}\, dyd\eta ,
\end{equation}
for some $F\in L^\infty (\rr d)\cap \mascE '(\rr d)$.

\par

\begin{rem}
By the identity
$(V_\phi h,F)=(h,V_\phi ^*F)$ and the fact that the map
$(f,\phi )\mapsto V_\phi f$ is continuous from $\mascS (\rr d)\times
\mascS (\rr d)$ to $\mascS (\rr {2d})$, it follows that $f=V_\phi ^*F$ is
uniquely defined as an element in $\mascS '(\rr d)$ when
$F\in \mascS '(\rr {2d})$ (cf. \cite{CPRT10}).
In particular, the space $\maclS _G(\rr d)$ in Definition \ref{Def:SCSG}
is well-defined, and it is evident that 
$\maclS _C(\rr d) \subseteq \maclS _G(\rr d)$.
\end{rem}

\par

The following is a restatement of \cite[Lemma 4.9]{Toft15}. The
result is essential when deducing the characterizations of Pilipovi{\'c}
spaces in Section \ref{sec3}.

\par

\begin{lemma}\label{GrochSpaceBargm}
Let $F\in L^\infty (\cc d)\cup \mascE '(\cc d)$. Then the Bargmann transform
of $f=V_\phi ^*F$
is given by $\Pi _AF_0$, where
\begin{equation}\label{Eq:FtoF0}
F_0(x+i\xi ) = (2\pi ^3)^{\frac d4}F(\sqrt 2 x,-\sqrt 2 \xi )
e^{\frac 12(|x|^2+|\xi |^2)}
e^{-i\scal x\xi}, \quad x,\xi \in \rr d.
\end{equation}

\par

Moreover, the images of $\maclS _C(\rr d)$ and $\maclS _G(\rr d)$
under the Bargmann transform are given by
\begin{equation}\label{Eq:PiAGrochImages}
\sets {\Pi _AF}{F\in L^\infty (\cc d)\cap \mascE '(\cc d)}
\quad \text{and}\quad
\sets {\Pi _AF}{F\in \mascE '(\cc d)},
\end{equation}
respectively.
\end{lemma}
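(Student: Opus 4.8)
The plan is to first establish the pointwise identity $\mathfrak V_d(V_\phi ^*F) = \Pi _AF_0$, from which both image statements in \eqref{Eq:PiAGrochImages} follow with little extra work. At the outset I would record that everything is meaningful for $F$ in the union $L^\infty (\cc d)\cup \mascE '(\cc d)$: since $L^\infty (\cc d)\cup \mascE '(\cc d)\subseteq \maclS _{1/2}'(\rr {2d})$ and $V_\phi ^*$ is continuous from $\maclS _{1/2}'(\rr {2d})$ to $\maclS _{1/2}'(\rr d)$, the element $f=V_\phi ^*F$ lies in $\maclS _{1/2}'(\rr d)$, so $\mathfrak V_df$ is defined through \eqref{bargdistrform}; likewise $F_0\in L^\infty (\cc d)\cup \mascE '(\cc d)$, so $\Pi _AF_0$ is defined.

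For the core identity I would use the explicit form of $V_\phi ^*F$ underlying \eqref{Eq:fSTFTAdjDef}, namely $V_\phi ^*F(t)=\iint F(y,\eta )\,\phi (t-y)e^{i\scal t\eta}\,dy\,d\eta$, substitute it into $(\mathfrak V_df)(z)=\int \mathfrak A_d(z,t)f(t)\,dt$, and interchange the order of integration. When $F\in \mascE '(\cc d)$ this is justified by compact support, and for $F\in L^\infty (\cc d)$ one interprets the inner pairing distributionally and extends the resulting operator identity from a dense subclass by the continuity just cited. This reduces the claim to a single elementary computation:
\[
(\mathfrak V_df)(z) = \iint _{\rr {2d}} F(y,\eta )\, \mathfrak V_d\bigl( \phi (\cdo -y)e^{i\scal \cdo \eta}\bigr)(z)\, dy\, d\eta ,
\]
so that everything hinges on the Bargmann transform of one translated–modulated Gaussian. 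Equivalently, one may invoke the intertwining relation $\mathfrak V_d=U_{\mathfrak V}\circ V_\phi$ from \eqref{bargstft1}, which turns the left-hand side into $(2\pi )^{d/2}e^{\frac 12(|x|^2+|\xi |^2)}e^{-i\scal x\xi}(V_\phi V_\phi ^*F)(2^{1/2}x,-2^{1/2}\xi )$ and reduces matters to the short-time Fourier transform reproducing kernel. Either way the inner Gaussian integral produces, up to a constant, a factor $e^{(z,w)}$ times a Gaussian in $(y,\eta )$, where $w\in \cc d$ depends linearly on $(y,\eta )$; performing the change of variables $(y,\eta )\mapsto w$ and comparing with \eqref{reproducing} exhibits the right-hand side as $\Pi _AF_0$ with $F_0$ exactly as in \eqref{Eq:FtoF0}. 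I expect the bookkeeping here to be the main obstacle: keeping track of the constants, of the dilation $S$ from \eqref{Sdef}–\eqref{UVdef}, and of the conjugation in the Hermitian product $(z,w)$, and in particular matching the Gaussian STFT kernel with the Bargmann kernel $e^{(z,w)}$ weighted by $d\mu$. The remaining steps are formal.

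Granting the core identity, the image statements follow. By Definition \ref{Def:SCSG} one has $\maclS _C(\rr d)=V_\phi ^*\bigl(L^\infty (\cc d)\cap \mascE '(\cc d)\bigr)$ and $\maclS _G(\rr d)=V_\phi ^*\bigl(\mascE '(\cc d)\bigr)$, so applying $\mathfrak V_d$ and the formula $\mathfrak V_d(V_\phi ^*F)=\Pi _AF_0$ presents the two images as $\{\Pi _AF_0\}$ with $F$ ranging over $L^\infty (\cc d)\cap \mascE '(\cc d)$, respectively over $\mascE '(\cc d)$. It then suffices to observe that the assignment $F\mapsto F_0$ in \eqref{Eq:FtoF0} is a bijection of $L^\infty (\cc d)\cap \mascE '(\cc d)$ onto itself, and of $\mascE '(\cc d)$ onto itself. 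Indeed, it is the composition of the invertible real-linear change of variables $(x,\xi )\mapsto (2^{1/2}x,-2^{1/2}\xi )$ with multiplication by the smooth, nowhere-vanishing factor $(2\pi ^3)^{d/4}e^{\frac 12(|x|^2+|\xi |^2)}e^{-i\scal x\xi}$. Both operations preserve compact support and the class $\mascE '$ and are invertible there; and since $F$ has compact support, the otherwise unbounded exponential factor is bounded on $\supp F$, so the operation also maps $L^\infty $ into $L^\infty $, with inverse given by division by the same factor (bounded on the compact support). Hence replacing $F$ by $F_0$ leaves each image set unchanged, which yields \eqref{Eq:PiAGrochImages}.
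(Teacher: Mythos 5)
The paper does not actually prove this lemma: it is presented as a restatement of \cite[Lemma 4.9]{Toft15}, with the proof deferred to that reference. So there is no in-paper argument to compare yours against line by line; what can be assessed is whether your reconstruction is sound, and in structure it is. Your route --- reduce via the intertwining $\mathfrak V_d=U_{\mathfrak V}\circ V_\phi$ (or by pairing $V_\phi ^*F$ with the Bargmann kernel and moving $V_\phi$ onto the kernel), compute the Bargmann transform of a single translated--modulated Gaussian, and then change variables to recognize $\Pi _AF_0$ --- is exactly the standard mechanism behind the cited result, and your second step is complete and correct: since $F\mapsto F_0$ in \eqref{Eq:FtoF0} is the composition of an invertible linear change of variables with multiplication by a smooth nonvanishing factor (bounded on compact sets), it is a bijection of $\mascE '(\cc d)$ onto itself and of $L^\infty (\cc d)\cap \mascE '(\cc d)$ onto itself, which immediately converts the core identity into \eqref{Eq:PiAGrochImages}.

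The weakness is that the core identity itself is asserted rather than established. The entire first claim of the lemma \emph{is} the formula \eqref{Eq:FtoF0}, including the dilation $(x,\xi )\mapsto (\sqrt 2x,-\sqrt 2\xi )$, the phase $e^{-i\scal x\xi}$, the weight $e^{\frac 12(|x|^2+|\xi |^2)}$ and the constant $(2\pi ^3)^{\frac d4}$; your proof says the Gaussian integral ``produces, up to a constant, \dots{} exactly as in \eqref{Eq:FtoF0}'' without carrying out the computation, so the one thing the lemma pins down is precisely what is left unverified. Moreover, the unnormalized formula you start from, $V_\phi ^*F(t)=\iint F(y,\eta )\phi (t-y)e^{i\scal t\eta}\, dy\, d\eta$, differs by a constant factor from the paper's \eqref{Eq:fSTFTAdjDef} (which carries $(2\pi )^{-\frac d2}$ against the unnormalized Gaussian $e^{-\frac 12|x-y|^2}$ rather than $\phi$), so if you ran the bookkeeping as set up, the constant would not come out as $(2\pi ^3)^{\frac d4}$ without first reconciling the normalization of $V_\phi ^*$. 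Finally, the density/continuity argument you invoke for $F\in L^\infty$ is unnecessary detour: for $F\in L^\infty$ the interchange is a direct Fubini argument (the STFT of the Bargmann kernel decays Gaussianly), and for $F\in \mascE '$ it is the definition of the adjoint pairing; phrasing it that way closes the justification cleanly.
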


\par



The next result is a straight-forward consequence of
\cite[Theorem 4.10]{Toft15}. The proof is therefore omitted.

\par

\begin{prop}\label{Prop:GrochPilipIdentity}
It holds $\maclS _C(\rr d) = \maclS _G(\rr d) =
\maclH _{\flat _1}(\rr d)$.
\end{prop}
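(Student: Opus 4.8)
The plan is to transport the claimed identity through the Bargmann transform $\mathfrak V_d$, exploiting that $\mathfrak V_d$ is injective on a common ambient space. Since $\maclS _C(\rr d)\subseteq \maclS _G(\rr d)\subseteq \mascS '(\rr d)\subseteq \maclH _0'(\rr d)$ and $\maclH _{\flat _1}(\rr d)\subseteq \maclH _0'(\rr d)$, and since $\mathfrak V_d$ restricts to a bijection from $\maclH _0'(\rr d)$ onto $\maclA _0'(\cc d)$ (cf. the bijections between the spaces in \eqref{Eq:PilSpaces} and \eqref{Eq:PowerSeriesSpaces}), it suffices to show that the three spaces have the same image under $\mathfrak V_d$. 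Injectivity will then force the three subspaces themselves to coincide.

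First I would identify the three images. By Lemma \ref{GrochSpaceBargm}, the Bargmann images of $\maclS _C(\rr d)$ and $\maclS _G(\rr d)$ are exactly
\[
\sets {\Pi _AF}{F\in L^\infty (\cc d)\cap \mascE '(\cc d)} = \Pi _A(L^\infty _c(\cc d))
\quad \text{and}\quad
\sets {\Pi _AF}{F\in \mascE '(\cc d)} = \Pi _A(\mascE '(\cc d)),
\]
respectively. On the other hand, the bijection from \eqref{Eq:PilSpaces} onto \eqref{Eq:PowerSeriesSpaces} gives $\mathfrak V_d(\maclH _{\flat _1}(\rr d)) = \maclA _{\flat _1}(\cc d)$.

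These three images now coincide by the already established Paley--Wiener identity \eqref{Eq:OriginalMappings}, which asserts precisely
\[
\Pi _A(\mascE '(\cc d)) = \Pi _A(L^\infty _c (\cc d)) = \maclA _{\flat _1}(\cc d).
\]
Combining the displays yields $\mathfrak V_d(\maclS _C(\rr d)) = \mathfrak V_d(\maclS _G(\rr d)) = \mathfrak V_d(\maclH _{\flat _1}(\rr d))$, and applying the injectivity of $\mathfrak V_d$ on $\maclH _0'(\rr d)$ then gives $\maclS _C(\rr d) = \maclS _G(\rr d) = \maclH _{\flat _1}(\rr d)$, completing the argument.

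The proof is essentially a bookkeeping step once Lemma \ref{GrochSpaceBargm} and \eqref{Eq:OriginalMappings} are in hand, so no substantial obstacle arises. The only point deserving a moment of care is verifying that $\maclS _C(\rr d)$ and $\maclS _G(\rr d)$ genuinely embed into a space on which $\mathfrak V_d$ is injective, so that equality of Bargmann images may be pulled back to equality of the original distribution spaces; this is guaranteed by their inclusion into $\maclH _0'(\rr d)$ together with the bijectivity of $\mathfrak V_d$ there.
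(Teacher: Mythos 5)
Your proof is correct, but it is not the paper's route: the paper offers no argument at all for this proposition, simply citing \cite[Theorem 4.10]{Toft15}, and then asserts that Proposition \ref{Prop:GrochPilipBargmIdentity} is an equivalent Bargmann-side reformulation. What you have done is make that asserted equivalence explicit, run in the direction from the Bargmann side back to $\rr d$: you combine Lemma \ref{GrochSpaceBargm} with the identity \eqref{Eq:OriginalMappings} (itself quoted from \cite{Toft15} in the introduction, and independently recovered in Section \ref{sec2} via Theorem \ref{Thm:MainResult1}), and then pull equality of images back through the injective map $\mathfrak V_d\colon \maclH _0'(\rr d)\to \maclA _0'(\cc d)$. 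The set-theoretic step is sound, and the needed embeddings hold: $\maclS _C(\rr d)\subseteq \maclS _G(\rr d)\subseteq \mascS '(\rr d)$ by Definition \ref{Def:SCSG}, while $\mascS '(\rr d)\subseteq \maclH _0'(\rr d)$ because Hermite coefficients of tempered distributions are well defined, grow at most polynomially (hence are dominated by $r^\alpha \alpha !^{\frac 12}$ for every $r$), and determine the distribution. Two caveats. First, you implicitly use that the formal power-series definition of $\mathfrak V_d$ on $\maclH _0'(\rr d)$ is consistent with the Bargmann transform appearing in Lemma \ref{GrochSpaceBargm} on $\mascS '(\rr d)$; the paper only records consistency on $L^2(\rr d)$, so this deserves a sentence, even though it is standard. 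Second, be aware that your argument adds no independent analytic content: all of the depth sits in \eqref{Eq:OriginalMappings}, which in \cite{Toft15} is essentially a reformulation of the very statement being proved. What your route buys is a clean, self-contained deduction inside this paper's framework — making precise the ``equivalence via image properties of the Bargmann transform'' that the paper merely states — rather than a new proof of the underlying Paley--Wiener fact.
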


\par

Due to the image properties of the Bargmann transform,
the next result is
equivalent with the previous one.

\par

\begin{prop}\label{Prop:GrochPilipBargmIdentity}
The sets in \eqref{Eq:PiAGrochImages} are equal to $\maclA _{\flat _1}(\cc d)$. 
\end{prop}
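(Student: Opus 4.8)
The plan is to deduce Proposition \ref{Prop:GrochPilipBargmIdentity} directly from the two results that immediately precede it, namely Lemma \ref{GrochSpaceBargm} and Proposition \ref{Prop:GrochPilipIdentity}, together with the already-quoted Paley--Wiener identity \eqref{Eq:OriginalMappings}. The key observation is that the statement is, as the text says, simply a translation of Proposition \ref{Prop:GrochPilipIdentity} across the Bargmann transform, so essentially no new analysis is required; the content is bookkeeping with the mapping dictionary supplied in Section \ref{sec1}.

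First I would recall that the two sets appearing in \eqref{Eq:PiAGrochImages} are, by Lemma \ref{GrochSpaceBargm}, exactly the Bargmann transform images of $\maclS _C(\rr d)$ and of $\maclS _G(\rr d)$. That is, writing $\mathfrak V_d$ for the Bargmann transform, we have
\begin{equation*}
\mathfrak V_d(\maclS _C(\rr d)) = \sets {\Pi _AF}{F\in L^\infty (\cc d)\cap \mascE '(\cc d)}
\quad \text{and}\quad
\mathfrak V_d(\maclS _G(\rr d)) = \sets {\Pi _AF}{F\in \mascE '(\cc d)}.
\end{equation*}
Next I would invoke Proposition \ref{Prop:GrochPilipIdentity}, which asserts $\maclS _C(\rr d)=\maclS _G(\rr d)=\maclH _{\flat _1}(\rr d)$. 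Applying the (bijective, hence set-equality-preserving) map $\mathfrak V_d$ to this chain of equalities shows that both sets in \eqref{Eq:PiAGrochImages} coincide with $\mathfrak V_d(\maclH _{\flat _1}(\rr d))$.

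It then remains to identify $\mathfrak V_d(\maclH _{\flat _1}(\rr d))$ with $\maclA _{\flat _1}(\cc d)$. This is furnished by the general fact, recorded in Section \ref{subsec1.4}, that $\mathfrak V_d$ restricts to a bijection from $\maclH _s(\rr d)$ onto $\maclA _s(\cc d)$ for every $s\in \mathbf R_\flat$; taking $s=\flat _1$ gives $\mathfrak V_d(\maclH _{\flat _1}(\rr d))=\maclA _{\flat _1}(\cc d)$. Combining the three steps yields that both sets in \eqref{Eq:PiAGrochImages} equal $\maclA _{\flat _1}(\cc d)$, which is the claim. As a consistency check one may also note that this is compatible with \eqref{Eq:OriginalMappings}, since the images $\Pi_A(\mascE'(\cc d))$ and $\Pi_A(L^\infty_c(\cc d))$ appearing there are precisely the two sets in \eqref{Eq:PiAGrochImages}.

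I do not anticipate a genuine obstacle, since every nontrivial ingredient has already been established in the cited lemma and proposition; the only point requiring a little care is to make sure the set-theoretic manipulation is legitimate, i.e. that $\mathfrak V_d$ is applied as a bijection so that $\maclS _C=\maclS _G$ really does transport to equality of the two image sets, and that the identification $\mathfrak V_d(\maclH _{\flat _1})=\maclA _{\flat _1}$ is the correct instance of the bijection $\maclH _s\to\maclA _s$. Given the explicit statement that this proposition is ``equivalent with the previous one,'' the cleanest write-up is a two-line deduction, and I would keep it at that level rather than reproving any mapping property.
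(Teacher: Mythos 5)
Your proposal is correct and coincides with the paper's own (implicit) argument: the paper offers no separate proof, saying only that the result is ``equivalent with the previous one'' due to the image properties of the Bargmann transform, which is exactly your chain of deductions combining Lemma \ref{GrochSpaceBargm}, Proposition \ref{Prop:GrochPilipIdentity}, and the bijectivity of $\mathfrak V_d$ from $\maclH _{\flat _1}(\rr d)$ onto $\maclA _{\flat _1}(\cc d)$ recorded in Section \ref{subsec1.4}. Your write-up merely makes explicit the bookkeeping the paper leaves to the reader.
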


\par

In the next section we extend Propositions \ref{Prop:GrochPilipIdentity}
and \ref{Prop:GrochPilipBargmIdentity} by proving that the conclusions in
Proposition \ref{Prop:GrochPilipBargmIdentity} hold for suitable smaller and larger
sets than those in \eqref{Eq:PiAGrochImages}. We also deduce similar identifications
for other Pilipovi{\'c} spaces and their Bargmann images.

\par

\section{Paley-Wiener properties for
Bargmann-Pilipovi{\'c} spaces}\label{sec2}

\par

In this section we consider spaces of compactly supported functions with interiors in
$\maclA _s(\cc d)$ or in $\maclA _s'(\cc d)$. We show that the images of
such functions under the reproducing kernel $\Pi _A$ are equal to
$\maclA _s(\cc d)$, for some other choice of
$s\le \flat _1$. In the first part we state the main results given in Theorems
\ref{Thm:MainResult1}--\ref{Thm:MainResult6}. They are straight-forward
consequences of Propositions \ref{Prop:MainResult1}--\ref{Prop:MainResult6No2},
which in some sense contain more information. Thereafter we deduce
results which are needed for their proofs. Depending of the choice of $s$,
there are several different situations for characterizing $\maclA _s(\cc d)$.
This gives rise to a quite large flora of main results, where each one takes
care of one situation.

\par

In order to present the main results, we need the following definition.

\par

\begin{defn}\label{Def:RClass}
Let $t_1,t_2\in \rr d_+$ be such that $t_1\le t_2$. Then the function
$\chi \in L^\infty (\cc d)$ is called positive, bounded and radial symmetric
with respect to $t_1$ and $t_2$, if the following conditions are fulfilled:
\begin{itemize}
\item $\chi \in L^\infty (\cc d)\cap \mascE '
(\overline{D_{t_2}(0)})$ is non-negative;

\vrum

\item $\chi (z_1,\dots ,z_d)=\chi _0(|z_1|,\dots ,|z_d|)$
for some function $\chi _0$;

\vrum

\item $\chi \ge c$ on $D_{t_1}(0)$ for some constant $c>0$.
\end{itemize}
The set of positive, bounded and radial symmetric functions
with respect to $t_1$ and $t_2$  is denoted by
$\maclR ^\infty _{t_1,t_2}(\cc d)$, and $\maclR ^\infty (\cc d)$
is defined by
$$
\maclR ^\infty (\cc d) \equiv \bigcup _{t_1\le t_2\in \rr d_+}
\maclR ^\infty _{t_1,t_2}(\cc d).
$$
\end{defn}

\par

\subsection{Main results}

\par

We begin with characterizing the largest spaces in our
investigations, which appears when $s=\flat _1$, and
then proceed with spaces of decreasing order. First we recall
that elements in $\maclA _s(\cc d)$ and $\maclA _{0,s}(\cc d)$
fulfill conditions of the form
\begin{align}
|F(z)| &\lesssim e^{r_1|z_1|^{\frac {2\sigma}{\sigma +1}}+\cdots
+r_d|z_d|^{\frac {2\sigma}{\sigma +1}}},
\label{Eq:PilSpaceCharEst1}
\intertext{when $s=\flat _\sigma$ and of the form}
|F(z)| &\lesssim e^{r_1 (\log \eabs {z_1})^{\frac 1{1-2s}} +\cdots +
r_d (\log \eabs {z_d})^{\frac 1{1-2s}}},
\label{Eq:PilSpaceCharEst2}
\end{align}
when $s\in [0,\frac 12)$.

\par

\begin{thm}\label{Thm:MainResult1}
Let $F\in A(\cc d)$, $\sigma =1$ and $s>1$. Then the following
conditions are equivalent:
\begin{enumerate}
\item $F\in \maclA _{\flat _1}(\cc d)$;

\vrum

\item The estimate \eqref{Eq:PilSpaceCharEst1} holds for some $r\in \rr d_+$;

\vrum

\item For some $r_0\in \rr d_+$ and every $r\in \rr d_+$ with $r_0\le r$
and every $\chi \in \maclR _{r_0,r}^\infty (\cc d)$, there exists $F_0\in
A(\overline{D_r(0)})$ such that
$F = \Pi _A(F_0\cdot \chi )$;

\vrum

\item For some $r_0\in \rr d_+$ and every $r\in \rr d_+$ with $r_0<r$
and some $\chi \in \maclR _{r_0,r}^\infty (\cc d)$, there exists $F_0\in
A(\overline{D_r(0)})$ such that $F = \Pi _A(F_0\cdot \chi )$;

\vrum

\item There exists $F_0\in
\mascE '(\cc d)\cap L^\infty (\cc d)$ such that $F = \Pi _AF_0$;

\vrum

\item There exists $F_0\in
\maclE _s'(\cc d)$ such that $F = \Pi _AF_0$.
\end{enumerate}
\end{thm}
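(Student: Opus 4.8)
The plan is to establish the chain of equivalences $(2)\Leftrightarrow(1)$, then cycle through $(1)\Rightarrow(5)\Rightarrow(3)\Rightarrow(4)\Rightarrow(6)\Rightarrow(1)$, organizing so that the heavy analytic lifting happens once and the rest reduces to inclusions between the relevant distribution/function spaces. The equivalence $(1)\Leftrightarrow(2)$ is immediate from Proposition \ref{Prop:AnFuncSpaceChar}: with $s=\flat _1$ we have $\sigma=1$, so $\frac{2\sigma}{\sigma+1}=1$, and the stated characterization of $\maclA _{\flat _1}(\cc d)$ is exactly that $Fe^{-M_{1,r,\flat _1}}\in L^\infty$ for some $r$, which is the estimate \eqref{Eq:PilSpaceCharEst1}. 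So I would dispatch this first and treat $(1)$ and $(2)$ as interchangeable throughout.

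The core of the argument is $(1)\Rightarrow(5)$, realizing a given $F\in\maclA _{\flat _1}(\cc d)$ as $\Pi _AF_0$ for some $F_0\in\mascE '(\cc d)\cap L^\infty(\cc d)$. Here I would invoke Proposition \ref{Prop:GrochPilipBargmIdentity}, which says precisely that $\sets{\Pi _AF}{F\in L^\infty(\cc d)\cap\mascE '(\cc d)}=\maclA _{\flat _1}(\cc d)$; this gives $(1)\Rightarrow(5)$ (and the converse $(5)\Rightarrow(1)$) essentially for free. The genuinely new content over the original \eqref{Eq:OriginalMappings} is upgrading the cutoff data to the radial class $\maclR _{r_0,r}^\infty(\cc d)$ and to Gevrey distributions. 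For $(5)\Rightarrow(3)$ I would take the bounded compactly supported $F_0$ produced above and show that, given any admissible $\chi$, one can solve $F=\Pi _A(G_0\cdot\chi)$ with $G_0$ analytic on $\overline{D_r(0)}$: since $\Pi _A$ only sees the reproducing pairing against $e^{(z,\cdot)-|\cdot|^2}$, and $\chi$ is bounded below by $c>0$ on $D_{t_1}(0)$, one divides by $\chi$ on the support where it is positive and absorbs the analytic profile; the radial symmetry of $\chi$ is what guarantees the Bargmann/Toeplitz computation respects the monomial basis $\{e_\alpha\}$ coordinatewise, via Lemma \ref{Lemma:SplitWeights} and Theorem \ref{cA2sCharExt}. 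The implications $(3)\Rightarrow(4)$ and $(4)\Rightarrow(6)$ are inclusions in disguise: $(3)$ quantifies over \emph{all} $\chi$ and so trivially implies the \emph{some}-$\chi$ statement $(4)$, while an analytic $F_0\in A(\overline{D_r(0)})$ multiplied by a compactly supported cutoff lies in $\maclE _s'(\cc d)$ for every $s>1$ (indeed it is smooth with compact support after cutting off), giving $(4)\Rightarrow(6)$. Finally $(6)\Rightarrow(1)$ is the Paley--Wiener direction: I would estimate $|\Pi _AF_0(z)|$ directly from the defining integral $\pi^{-d}\scal{F_0}{e^{(z,\cdot)-|\cdot|^2}}$, bounding the Gevrey distribution $F_0$ against the $\maclS _s$-seminorms of the test function $w\mapsto e^{(z,w)-|w|^2}$ and showing the compact support forces the growth \eqref{Eq:PilSpaceCharEst1}.

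The main obstacle will be $(6)\Rightarrow(1)$, the Gevrey Paley--Wiener estimate. For the smooth/$L^\infty$ case this is already contained in Proposition \ref{Prop:GrochPilipBargmIdentity}, but for $F_0\in\maclE _s'(\cc d)$ with $s>1$ one must pair a compactly supported Gevrey distribution of order $s$ against the entire kernel and still recover only $e^{r|z|}$-type growth rather than the $e^{r|z|^{1/s}}$-type growth that Gevrey testing naively suggests. The point is that compact support of $F_0$ makes the $|\IM\zeta|$-type term irrelevant in the Bargmann picture and that the kernel $e^{(z,w)-|w|^2}$ restricted to a fixed compact set in $w$ has all its derivatives controlled by powers of $\eabs z$, so the Gevrey order is absorbed into the constant for each fixed $z$ while the $z$-dependence stays linear in $|z|$; I would make this precise by differentiating the kernel in $w$, estimating $|\partial_w^\beta e^{(z,w)-|w|^2}|\lesssim \eabs z^{|\beta|}e^{C|z|}$ on the support, and summing the resulting Gevrey-weighted series, which converges because the compact support caps the relevant volume. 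Establishing that this summation yields exactly the exponent $\frac{2\sigma}{\sigma+1}=1$ on $|z_j|$ coordinatewise, rather than a worse global exponent, is where the careful bookkeeping lies, and I would lean on the tensor structure from Lemma \ref{Lemma:SplitWeights} to keep the estimate separated in the variables $z_1,\dots,z_d$.
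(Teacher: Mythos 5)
Your skeleton (the cycle $(1)\Leftrightarrow (2)$, $(1)\Rightarrow (5)\Rightarrow (3)\Rightarrow (4)\Rightarrow (6)\Rightarrow (1)$) is workable, your $(1)\Leftrightarrow (2)$ and $(3)\Rightarrow (4)$ are fine, and your sketch of $(6)\Rightarrow (1)$ is essentially the paper's own argument (Proposition \ref{Prop:InclUltraDist}: pair $F_0$ with $\Phi _z\Psi$, apply Leibniz' rule, use $|z_j|^{\gamma _j}/\gamma _j!^{s}\le e^{s|z_j|^{1/s}}$, and absorb $e^{c|z|^{1/s}}$ into $e^{\ep |z|}$ precisely because $s>1$). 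Quoting Proposition \ref{Prop:GrochPilipBargmIdentity} for $(1)\Leftrightarrow (5)$ is also legitimate, though it simply re-imports the old result \eqref{Eq:OriginalMappings} which the theorem is meant to refine; the paper instead reproves this self-containedly. The genuine gap is your step $(5)\Rightarrow (3)$, which is the actual new content of the theorem. The mechanism you describe --- ``divide by $\chi$ on the support where it is positive and absorb the analytic profile'' --- cannot work: the $F_0$ coming from (5) is merely bounded and compactly supported, and a quotient by an $L^\infty$ cutoff is in no way analytic; moreover Lemma \ref{Lemma:SplitWeights} and Theorem \ref{cA2sCharExt} are norm identities for weighted $A^2$ spaces and produce no candidate $F_0$ at all. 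What the paper actually does (proof of Proposition \ref{Prop:MainResult1}, implication $(1)\Rightarrow (2)$ there) is completely different: radial symmetry makes $G\mapsto \Pi _A(G\cdot \chi )$ \emph{diagonal on monomials}, namely $\Pi _A(\varsigma _\alpha z^\alpha \chi )=e_\alpha$ with the explicit two-sided bounds \eqref{Eq:Estvarsigma} (Lemma \ref{Lemma:HermiteGroch}), so the \emph{only} possible analytic solution of $\Pi _A(F_0\chi )=F$ is $F_0=\sum _\alpha c(F,\alpha )\varsigma _\alpha z^\alpha$, and the entire problem is to prove that this forced series converges on a neighbourhood of $\overline{D_r(0)}$.

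That convergence is exactly where the quantitative bookkeeping you omit becomes unavoidable. From $|c(F,\alpha )|\lesssim r_1^\alpha \alpha !^{-\frac 12}$ (Lemma \ref{Lemma:Omega34Equiv} with $\tau =1$) and $\varsigma _\alpha \lesssim \eabs \alpha ^d t_1^{-2\alpha}\alpha !^{\frac 12}$, the terms are dominated by $\eabs \alpha ^d(r_1r/t_1^2)^\alpha$ on $D_r(0)$, so one needs $r_1r<t_1^2$: the inner radius $t_1$ of $\chi$ must exceed $\sqrt{r_1r}$. This is the constraint $r_1r_4<r_2^2$ in the paper's proof, and it is why Proposition \ref{Prop:MainResult1} ties the radii of $\chi$ to the polydisc radius $r$ instead of asserting anything uniform in $r$. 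The unconstrained statement your step would have to deliver is in fact false: take $F(z)=e^{(z,w_0)}\in \maclA _{\flat _1}(\cc d)$, so that $c(F,\alpha )=\overline{w_0}^{\, \alpha}\alpha !^{-\frac 12}$, and let $\chi$ be the characteristic function of $D_{r_0}(0)$, which lies in $\maclR ^\infty _{r_0,r}(\cc d)$ for every $r\ge r_0$; then the forced coefficients satisfy $c(F,\alpha )\varsigma _\alpha \asymp \eabs \alpha ^d(\overline{w_0}/r_0^2)^\alpha$ up to constants, giving a power series with finite radius of convergence $r_0^2/|w_0|$ in each variable, so no $F_0\in A(\overline{D_r(0)})$ can exist once $r$ is large. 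Any correct proof must therefore route through Lemma \ref{Lemma:HermiteGroch}, the coefficient estimates, and these radii restrictions, reading condition (3) in the constrained form made precise by Proposition \ref{Prop:MainResult1}. Two smaller points: in $(4)\Rightarrow (6)$ the product $F_0\cdot \chi$ is generally \emph{not} smooth ($\chi$ is only $L^\infty$), but your conclusion survives via $\mascE '\cap L^\infty \subseteq \maclE _s'$; and in $(6)\Rightarrow (1)$ the relevant seminorms are the Gevrey $\maclE _s$-seminorms on a compact neighbourhood of $\supp F_0$, not $\maclS _s$-seminorms.
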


\par

\begin{rem}
Since
$$
\mascE '(\cc d)\cap L^\infty (\cc d) \subseteq \mascE '(\cc d)
\subseteq \maclE _s'(\cc d),
$$
Theorem \ref{Thm:MainResult1} still holds true after $\maclE _s'$
has been replaces by $\mascE '$ in (6).
\end{rem}

\par

\begin{thm}\label{Thm:MainResult2}
Let $F\in A(\cc d)$, $\sigma =1$ and $\chi \in \maclR ^\infty (\cc d)$.
Then the following is true:
\begin{itemize}
\item[(i)]
The following conditions are equivalent:
\begin{enumerate}
\item $F\in \maclA _{0,\flat _1}(\cc d)$;

\vrum

\item The estimate \eqref{Eq:PilSpaceCharEst1} holds for every $r\in \rr d_+$;

\vrum

\item
There exists $F_0\in A(\cc d)$ such that
$F = \Pi _A(F_0\cdot \chi )$;
\end{enumerate}

\vrum

\item[(ii)]
The map $F\mapsto \Pi _A(F\cdot \chi )$ from $A(\cc d)$ to
$\maclA _{0,\flat _1}(\cc d)$ is a homeomorphism.
\end{itemize}
\end{thm}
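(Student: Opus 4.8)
The plan is to reduce the whole statement to a single computation of the power series coefficients of $\Pi _A(F_0\cdot \chi )$. The equivalence of (1) and (2) needs no new argument: by Definition \ref{Def:PowerSeriesSpaces}, membership $F\in \maclA _{0,\flat _1}(\cc d)$ means $|c(F,\alpha )|\lesssim r^\alpha \alpha !^{-1/2}$ for every $r\in \rr d_+$, and by Proposition \ref{Prop:AnFuncSpaceChar} with $s=\flat _1$ (so that $M^0_{1,r,\flat _1}(z)=r_1|z_1|+\cdots +r_d|z_d|$) this is exactly the assertion that \eqref{Eq:PilSpaceCharEst1} holds for every $r$. The substance of part (i) is therefore the equivalence of (1) with (3); note the contrast with Theorem \ref{Thm:MainResult1}, where one only needs $F_0$ analytic near a polydisc, whereas here the smaller Beurling space will force $F_0$ to be \emph{entire}.

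First I would compute the coefficients. Fix $\chi \in \maclR ^\infty _{t_1,t_2}(\cc d)$ and $F_0\in A(\cc d)$, written as $F_0=\sum _\beta c(F_0,\beta )e_\beta$ with $e_\beta$ as in \eqref{BargmannHermite}. Since $\chi$ is bounded with compact support and $F_0$ is entire, $F_0\cdot \chi \in L^\infty _c(\cc d)$ and the reproducing integral \eqref{reproducing} converges absolutely. Expanding $e^{(z,w)}=\sum _\alpha z^\alpha \overline w^\alpha /\alpha !$ and interchanging sum and integral (justified by uniform convergence of the series on $\supp \chi$), the radial symmetry $\chi (w)=\chi _0(|w_1|,\dots ,|w_d|)$ makes the angular integrals $\int _0^{2\pi}e^{i(\beta _j-\alpha _j)\theta _j}\,d\theta _j$ vanish unless $\beta =\alpha$. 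This diagonalisation gives
\begin{equation*}
c(\Pi _A(F_0\cdot \chi ),\alpha )=\frac{c(F_0,\alpha )\,I_\alpha }{\alpha !},\qquad
I_\alpha =\int _{\cc d}\Big ( \prod _{j=1}^d|w_j|^{2\alpha _j}\Big )\chi (w)\,d\mu (w).
\end{equation*}
The crucial point is that $I_\alpha$ admits two-sided bounds. Passing to polar coordinates in each variable and using $\chi \le \|\chi \|_\infty$ on the one hand and $\chi \ge c>0$ on $D_{t_1}(0)$ on the other, I obtain
\begin{equation*}
C_1\,(t_1/2)^{2\alpha }\le I_\alpha \le C_2\,t_2^{2\alpha },\qquad \alpha \in \nn d,
\end{equation*}
(multi-index powers), the lower bound coming from integrating $\rho _j^{2\alpha _j+1}e^{-\rho _j^2}$ over $[t_{1,j}/2,t_{1,j}]$. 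In particular $I_\alpha >0$ for every $\alpha$.

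With these bounds the two implications become symmetric. For (3)$\Rightarrow$(1), Cauchy's estimates for the entire $F_0$ give $|c(F_0,\alpha )|\le M(R)\,\alpha !^{1/2}R^{-\alpha }$ for every $R\in \rr d_+$, with $M(R)=\sup _{|w_j|\le R_j}|F_0|$; combined with $I_\alpha \le C_2 t_2^{2\alpha }$ this yields $|c(F,\alpha )|\lesssim (t_2^2/R)^\alpha \alpha !^{-1/2}$, and choosing $R_j\ge t_{2,j}^2/r_j$ produces the defining bound of $\maclA _{0,\flat _1}(\cc d)$ for each prescribed $r$. Conversely, for (1)$\Rightarrow$(3) I would \emph{define} $F_0$ by $c(F_0,\alpha )=\alpha !\,I_\alpha ^{-1}c(F,\alpha )$, which is legitimate precisely because $I_\alpha >0$; then the lower bound $I_\alpha \ge C_1(t_1/2)^{2\alpha }$ together with $|c(F,\alpha )|\lesssim r^\alpha \alpha !^{-1/2}$ for every $r$ shows that the Taylor coefficients $c(F_0,\alpha )/\sqrt{\alpha !}$ are bounded by $C_{r'}(r')^\alpha$ for every $r'\in \rr d_+$, forcing the radius of convergence to be infinite, so $F_0\in A(\cc d)$; matching all power series coefficients gives $\Pi _A(F_0\cdot \chi )=F$.

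For part (ii), injectivity of $F_0\mapsto \Pi _A(F_0\cdot \chi )$ is immediate from $I_\alpha >0$, surjectivity onto $\maclA _{0,\flat _1}(\cc d)$ is the content of (i), and the coefficient estimates above show $\|\Pi _A(F_0\cdot \chi )\|_{\maclH _{\flat _1;r}}\le C\,M(R)$, so every defining seminorm of the target is dominated by a seminorm of $A(\cc d)$ and the map is continuous. Since both $A(\cc d)$ and $\maclA _{0,\flat _1}(\cc d)$ are Fr{\'e}chet spaces, the open mapping theorem then upgrades the continuous linear bijection to a homeomorphism. I expect the main obstacle to be the quantitative control of $I_\alpha$ — in particular the lower bound, which is what simultaneously guarantees $I_\alpha \neq 0$ (needed to invert) and forces $F_0$ to be entire; the non-factorising angular profile $\chi _0$ must be handled through the crude two-sided bounds $c\le \chi _0\le \|\chi \|_\infty$ rather than by any product formula.
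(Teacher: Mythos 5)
Your proposal is correct and takes essentially the same route as the paper: your diagonalisation $c(\Pi _A(F_0\cdot \chi ),\alpha )=c(F_0,\alpha )I_\alpha /\alpha !$ together with the two-sided bounds on $I_\alpha$ is exactly the content of the paper's Lemma \ref{Lemma:HermiteGroch} and relation \eqref{Eq:CoeffF0FComp} (one checks $\varsigma _\alpha =\alpha !^{\frac 12}/I_\alpha$, your bounds being a slightly cruder but sufficient version of \eqref{Eq:Estvarsigma}), and your two implications coincide with the paper's arguments that (1) implies (3) and that entirety of $F_0$, expressed through Cauchy estimates $|c(F_0,\alpha )|\lesssim r^\alpha \alpha !^{\frac 12}$ for every $r$, implies (1). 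The only substantive difference is that the paper's written proof stops after part (i), whereas you also establish part (ii) explicitly (injectivity from $I_\alpha >0$, continuity from the coefficient bounds, and the open mapping theorem between Fr\'echet spaces), which is a worthwhile completion rather than a divergence.
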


\par

The next result deals with the case when $s=\flat _\sigma$ with
$\sigma \in (\frac 12,1)$.

\par

\begin{thm}\label{Thm:MainResult3}
Let $F\in A(\cc d)$, $\chi \in \maclR ^\infty (\cc d)$,
$\sigma \in (\frac 12,1)$ and let
$$
\sigma _0 = \frac \sigma{2\sigma -1}. 
$$
Then the following is true:
\begin{itemize}
\item[(i)]
The following conditions are equivalent:
\begin{enumerate}
\item $F\in \maclA _{\flat _\sigma}(\cc d)$ ($F\in
\maclA _{0,\flat _\sigma}(\cc d)$);

\vrum

\item The estimate \eqref{Eq:PilSpaceCharEst1} holds for some
(for every) $r\in \rr d_+$;

\vrum

\item There exists
$F_0\in \maclA _{0,\flat _{\sigma _0}}'(\cc d)$
($F_0\in \maclA _{\flat _{\sigma _0}}'(\cc d)$) such that
$F = \Pi _A(F_0\cdot \chi )$;
\end{enumerate}

\vrum

\item[(ii)]
The mappings $F\mapsto \Pi _A(F\cdot \chi )$ from
$\maclA _{0,\flat _{\sigma _0}}'(\cc d)$ to
$\maclA _{\flat _{\sigma}}(\cc d)$ and
from
$\maclA _{\flat _{\sigma _0}}'(\cc d)$ to
$\maclA _{0,\flat _{\sigma}}(\cc d)$
are homeomorphisms.
\end{itemize}
\end{thm}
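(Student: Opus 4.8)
The plan is to reduce the entire statement to one \emph{multiplier estimate} for the operator $F_0\mapsto \Pi _A(F_0\cdot \chi )$, which in the monomial basis $\{ e_\alpha\}$ turns out to be diagonal. The equivalence of (1) and (2) requires no work: for $s=\flat _\sigma$ with $\sigma >0$ one has $M_{1,r,\flat _\sigma}(z)=r_1|z_1|^{\frac {2\sigma }{\sigma +1}}+\cdots +r_d|z_d|^{\frac {2\sigma }{\sigma +1}}$ and $M_{1,r,\flat _\sigma}^0=M_{1,r,\flat _\sigma}$, so the first two characterisations in Proposition \ref{Prop:AnFuncSpaceChar} say precisely that $F\in \maclA _{\flat _\sigma}(\cc d)$ (respectively $F\in \maclA _{0,\flat _\sigma}(\cc d)$) if and only if \eqref{Eq:PilSpaceCharEst1} holds for some (respectively every) $r\in \rr d_+$.

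For the core, fix $\chi \in \maclR ^\infty _{t_1,t_2}(\cc d)$ and an entire $F_0(w)=\sum _\beta c_\beta w^\beta$. Since $\chi$ is bounded with $\supp \chi \subseteq \overline{D_{t_2}(0)}$, the product $F_0\chi$ lies in $L^\infty _c(\cc d)$ and the reproducing integral \eqref{reproducing} converges absolutely. Substituting $e^{(z,w)}=\sum _\alpha z^\alpha \overline w^\alpha /\alpha !$ and integrating term by term, the radial symmetry $\chi (w)=\chi _0(|w_1|,\dots ,|w_d|)$ makes each angular integral $\int _0^{2\pi}e^{\im (\beta _j-\alpha _j)\theta _j}\, d\theta _j$ vanish unless $\beta _j=\alpha _j$, leaving only the diagonal terms; the operator is therefore diagonal,
\[
\Pi _A(F_0\cdot \chi )(z)=\sum _{\alpha \in \nn d}\lambda _\alpha c_\alpha z^\alpha ,
\qquad
\lambda _\alpha =\frac {1}{\pi ^d\alpha !}\int _{\cc d}|w|^{2\alpha }
\chi _0(|w_1|,\dots ,|w_d|)e^{-|w|^2}\, d\lambda (w),
\]
so that $c(\Pi _A(F_0\chi ),\alpha )=\lambda _\alpha \, c(F_0,\alpha )$ in the basis $\{ e_\alpha \}$.

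The main work, and the step I expect to be the genuine obstacle, is the two-sided bound
\[
\frac {c\, R_1^\alpha}{\alpha !}\le \lambda _\alpha \le \frac {C\, R_2^\alpha}{\alpha !},
\qquad \alpha \in \nn d ,
\]
for suitable $R_1,R_2\in \rr d_+$ and $c,C>0$. Bounding $\chi _0$ above by $\| \chi _0\| _\infty$ times the indicator of $\prod _j[0,t_{2,j}]$ and below by $c$ times the indicator of $\prod _j[0,t_{1,j}]$ (legitimate since $\chi \ge c$ on $D_{t_1}(0)$ and $\chi$ is supported in $\overline{D_{t_2}(0)}$), the integral factorises and, after $u_j=|w_j|^2$, reduces to the one-variable behaviour of $\int _0^{T}u^ke^{-u}\, du$. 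The upper bound uses the support cut-off $T=t_{2,j}^2$, the lower bound uses $T=t_{1,j}^2$; the polynomial factors appearing at the endpoints are harmless and only shrink $R_1$. The delicate point is conceptual rather than computational: because the integrand $u^ke^{-u}$ peaks near $u=k$, far outside $[0,T]$ for large $k$, cutting the integral at a fixed $T$ discards the Gaussian peak and produces exactly the gain of a full factor $1/\alpha !$ in $\lambda _\alpha$ --- this missing peak is the analytic heart of the Paley--Wiener effect.

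It remains to assemble. Recalling $\vartheta _{r,\flat _{\sigma _0}}'(\alpha )=r^\alpha \alpha !^{\frac 1{2\sigma _0}}$ and $\vartheta _{r,\flat _\sigma}(\alpha )=r^\alpha \alpha !^{-\frac 1{2\sigma }}$, multiplication by $\lambda _\alpha \asymp R^\alpha /\alpha !$ converts the factorial power $\alpha !^{\frac 1{2\sigma _0}}$ into $\alpha !^{\frac 1{2\sigma _0}-1}=\alpha !^{-\frac 1{2\sigma }}$, the last equality being exactly the defining relation $\frac 1{2\sigma _0}-1=-\frac 1{2\sigma }$, i.e.\ $\sigma _0=\frac \sigma {2\sigma -1}$. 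Thus $c(F,\alpha )=\lambda _\alpha c(F_0,\alpha )$ sends the bound defining $\maclA _{0,\flat _{\sigma _0}}'(\cc d)$ (valid for some $r$) to the bound defining $\maclA _{\flat _\sigma}(\cc d)$ (for some $r$, the base being shifted by the fixed factor $R_2$), while division by $\lambda _\alpha$ --- using the lower bound to define $F_0$ through $c(F_0,\alpha )=c(F,\alpha )/\lambda _\alpha$ --- does the reverse; this yields (1)$\Leftrightarrow$(3). The identical computation with ``every $r$'' in place of ``some $r$'' treats the Beurling pairing $\maclA _{\flat _{\sigma _0}}'(\cc d)\to \maclA _{0,\flat _\sigma}(\cc d)$. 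Finally, since multiplication by $\{ \lambda _\alpha\}$ and by $\{ \lambda _\alpha ^{-1}\}$ each shifts the defining seminorm parameter $r$ only by the fixed constants $R_1,R_2$, both are continuous for the inductive/projective limit topologies, giving the homeomorphisms in (ii).
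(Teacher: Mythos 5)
Your proposal is correct, and its engine coincides with the paper's: the diagonalisation of $F_0\mapsto \Pi _A(F_0\cdot \chi )$ via radial symmetry, with two-sided factorial bounds on the eigenvalues, is exactly Lemma \ref{Lemma:HermiteGroch} combined with \eqref{Eq:Estvarsigma} and \eqref{Eq:CoeffF0FComp} (your $\lambda _\alpha$ equals $(\varsigma _\alpha \alpha !^{\frac 12})^{-1}$ there). The genuine difference lies in the surrounding route. The paper proves Theorem \ref{Thm:MainResult3} through Proposition \ref{Prop:MainResult345}, which is phrased in terms of growth estimates with explicit radii; this forces a conversion between growth bounds and coefficient bounds via Lemma \ref{Lemma:Omega34Equiv}, whose proof consumes the weighted $A^2$-machinery of Theorem \ref{cA2sCharExt} and Lemmata \ref{Lemma:EstimateL2LInfty}, \ref{Lemma:EstFtimesEfunction} and \ref{Lemma:GammaReformulation1}. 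You bypass all of that by working directly with the coefficient definitions of the spaces (Definition \ref{Def:PowerSeriesSpaces}), invoking Proposition \ref{Prop:AnFuncSpaceChar} only for (1)$\Leftrightarrow$(2), and letting the identity $\frac 1{2\sigma _0}-1=-\frac 1{2\sigma}$ do the bookkeeping. Your route is shorter and makes the formula $\sigma _0=\frac \sigma{2\sigma -1}$ transparent; the paper's route additionally yields the quantitative relations \eqref{Eq:MainResult3Eq3} and \eqref{Eq:MainResult3Eq5} between the radii and the support parameters $t_1,t_2$, which it reuses uniformly for Theorems \ref{Thm:MainResult4} and \ref{Thm:MainResult5}. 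Two points you should state explicitly: in the direction (1)$\Rightarrow$(3), the $F_0$ defined by $c(F_0,\alpha )=c(F,\alpha )/\lambda _\alpha$ is entire, since its Taylor coefficients are $O(r^\alpha \alpha !^{\frac 1{2\sigma _0}-\frac 12})$ with $\sigma _0>1$, so your diagonal identity may legitimately be applied to it; and the term-by-term integration is justified because the Taylor series of $F_0$ converges uniformly on the compact set $\supp \chi$.
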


\par

The next result deals with the case when $s=\flat _\sigma$ with
$\sigma =\frac 12$.

\par

\begin{thm}\label{Thm:MainResult4}
Let $F\in A(\cc d)$, $\sigma =\frac 12$ and $\chi \in \maclR ^\infty (\cc d)$.
Then the following is true:
\begin{itemize}
\item[(i)]
The following conditions are equivalent:
\begin{enumerate}
\item $F\in \maclA _{\flat _{\sigma}}(\cc d)$
($F\in \maclA _{0,\flat _{\sigma}}(\cc d)$);

\vrum

\item The estimate \eqref{Eq:PilSpaceCharEst1} holds for some (for every)
$r\in \rr d_+$;

\vrum

\item There exists $F_0\in \maclA _{0,1/2}'(\cc d)$
($F_0\in \maclA _{0,1/2}(\cc d)$) such that
$F = \Pi _A(F_0\cdot \chi )$;
\end{enumerate}

\vrum

\item[(ii)]
The mappings $F\mapsto \Pi _A(F\cdot \chi )$ from
$\maclA _{0,1/2}'(\cc d)$ to
$\maclA _{\flat _{1/2}}(\cc d)$ and
from
$\maclA _{0,1/2}(\cc d)$ to
$\maclA _{0,\flat _{1/2}}(\cc d)$
are homeomorphisms.
\end{itemize}
\end{thm}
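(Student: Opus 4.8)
I would prove Theorem \ref{Thm:MainResult4} by reducing it to the already-established machinery, exactly in parallel with how one proves Theorem \ref{Thm:MainResult3} but now at the borderline case $\sigma=\frac12$. The key observation is that $s=\flat_{1/2}$ corresponds, via \eqref{Eq:PilSpaceCharEst1}, to the exponent $\frac{2\sigma}{\sigma+1}=\frac{2\cdot(1/2)}{(1/2)+1}=\frac23$, so condition (2) reads $|F(z)|\lesssim e^{r_1|z_1|^{2/3}+\cdots+r_d|z_d|^{2/3}}$. The equivalence of (1) and (2) is immediate from Proposition \ref{Prop:AnFuncSpaceChar}: the characterization there of $\maclA_{\flat_\sigma}$ (resp.\ $\maclA_{0,\flat_\sigma}$) via $Fe^{-M_{1,r,\flat_\sigma}}\in L^\infty$ is precisely the stated growth bound with $M_{1,r,\flat_\sigma}(z)=\sum_j r_j|z_j|^{2\sigma/(\sigma+1)}$. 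So the content is the equivalence of (3) with (1)--(2) and the homeomorphism statement (ii).

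\textbf{The reproducing-kernel computation.}
First I would compute $\Pi_A(F_0\cdot\chi)$ for $F_0$ in the relevant space and $\chi\in\maclR^\infty(\cc d)$. Using the integral formula \eqref{reproducing}, I would expand $F_0\cdot\chi$ in the monomial basis $\{e_\alpha\}$ and use the radial symmetry of $\chi$ (the condition $\chi(z)=\chi_0(|z_1|,\dots,|z_d|)$) to see that $\Pi_A$ acts diagonally: it sends $e_\alpha$ to a scalar multiple $\lambda_\alpha e_\alpha$, where $\lambda_\alpha=\pi^{-d}\int_{\cc d}\chi(w)|w^\alpha|^2 e^{-|w|^2}\,d\lambda(w)/\sqrt{\alpha!}$ against the appropriate normalization. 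The crucial point is to estimate these multipliers $\lambda_\alpha$: since $\chi$ is bounded and supported in $\overline{D_{t_2}(0)}$ and bounded below by $c$ on $D_{t_1}(0)$, one gets two-sided bounds of the form $\lambda_\alpha\asymp \vartheta(\alpha)^2$ for the weight $\vartheta$ attached to $s=\frac12$ via \eqref{omegavarthetaRel}$'$, using Lemma \ref{Lemma:SplitWeights} to reduce to the one-variable integrals $\int_0^{t}r^{\alpha_j}e^{-r}\,dr$. Comparing with the $s=\frac12$ weights $\vartheta_{r,1/2}(\alpha)=r^\alpha$ from \eqref{varthetarsDef}, this identifies $\Pi_A(\,\cdot\,\chi)$ as an isomorphism between the power-series spaces $\maclA_{0,1/2}'$ and $\maclA_{\flat_{1/2}}$ (resp.\ $\maclA_{0,1/2}$ and $\maclA_{0,\flat_{1/2}}$), matching the coefficient estimates $|c(F,\alpha)|\lesssim\vartheta_{r,s}(\alpha)$ on both sides.

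\textbf{Surjectivity and the homeomorphism.}
For the surjectivity direction in (3)$\Rightarrow$(1), given $F\in\maclA_{\flat_{1/2}}(\cc d)$ I would set $c(F_0,\alpha)=\lambda_\alpha^{-1}c(F,\alpha)$ and check, using the two-sided multiplier bounds, that the resulting $F_0$ lies in $\maclA_{0,1/2}'(\cc d)$ and satisfies $\Pi_A(F_0\cdot\chi)=F$; the Beurling/Roumieu bookkeeping (``for some $r$'' versus ``for every $r$'') swaps in the expected way because inverting the multiplier swaps the two types, which is exactly why the dual space $\maclA_{0,1/2}'$ appears paired with $\maclA_{\flat_{1/2}}$. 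The homeomorphism claim in (ii) then follows since the map and its inverse are both realized by diagonal multipliers with the uniform two-sided bounds, giving continuity in both directions with respect to the inductive/projective limit topologies. \emph{The main obstacle} I anticipate is the sharp two-sided estimate of the multipliers $\lambda_\alpha$ in the borderline case $\sigma=\frac12$: unlike the case $\sigma\in(\frac12,1)$ of Theorem \ref{Thm:MainResult3}, here the weights are the pure geometric weights $r^\alpha$ with no factorial correction, so one must verify that truncating the Gaussian integral to a bounded polydisc (because $\chi$ is compactly supported) still produces the full $r^\alpha$-type growth and does not degrade it to a strictly smaller exponent. This requires a careful incomplete-Gamma estimate showing $\int_0^{t}r^{\alpha_j}e^{-r}\,dr\asymp \alpha_j!$ up to subexponential factors absorbable into the $r$-parameter, which is where the key analytic work lies.
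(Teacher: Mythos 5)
Your structural plan is the same as the paper's: the paper proves this theorem by specializing Proposition \ref{Prop:MainResult345} to $\tau=1$, which rests on exactly the two ingredients you describe, namely the diagonal action of $F_0\mapsto \Pi _A(F_0\cdot \chi )$ on monomials with two-sided multiplier bounds (Lemma \ref{Lemma:HermiteGroch} and \eqref{Eq:CoeffF0FComp}) and the translation between growth bounds and Taylor coefficient bounds (Lemma \ref{Lemma:Omega34Equiv}). However, the estimate you single out as ``where the key analytic work lies'' is false, and this is a genuine gap. For fixed $t$ one has $e^{-t}\le e^{-r}\le 1$ on $[0,t]$, so
\begin{equation*}
\int _0^{t}r^{\alpha _j}e^{-r}\, dr \asymp \int _0^{t}r^{\alpha _j}\, dr
= \frac {t^{\alpha _j+1}}{\alpha _j+1},
\end{equation*}
which is \emph{geometric} in $\alpha _j$, not $\asymp \alpha _j!$ up to subexponential factors: the ratio $t^{\alpha _j}/\alpha _j!$ decays super-exponentially and can never be absorbed into the $r$-parameter. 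So the truncation \emph{does} ``degrade'' the full Gamma integral, and no careful incomplete-Gamma analysis can restore it.

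The deeper point is that this degradation is not an obstacle to be overcome; it is the very mechanism that makes the theorem true. The correct multiplier bound (this is \eqref{Eq:Estvarsigma} of Lemma \ref{Lemma:HermiteGroch}, and it is elementary, not delicate) is $\lambda _\alpha \asymp t^{2\alpha}\alpha !^{-1}$ up to factors $\prod _j(\alpha _j+1)^{\pm 1}$ and a geometric discrepancy between $t_1$ and $t_2$. The factor $\alpha !^{-1}$ gained from truncation is exactly what converts the $s=\tfrac 12$ weights $\vartheta '_{r,1/2}(\alpha )=r^\alpha$ of $F_0\in \maclA _{0,1/2}'(\cc d)$ into the $\flat _{1/2}$ weights $\vartheta _{r,\flat _{1/2}}(\alpha )=r^\alpha \alpha !^{-1}$ of $F\in \maclA _{\flat _{1/2}}(\cc d)$ (cf. \eqref{varthetarsDef} and \eqref{varthetarsDualDef} with $\sigma =\tfrac 12$), and likewise in the Beurling case. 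Had your claimed estimate been correct, $\lambda _\alpha$ would be essentially $1$ up to geometric factors, the map would send $\maclA _{0,1/2}'(\cc d)$ onto itself rather than onto $\maclA _{\flat _{1/2}}(\cc d)$, and condition (3) could not characterize $\maclA _{\flat _{1/2}}(\cc d)$ at all. Relatedly, your remark that ``inverting the multiplier swaps the two types'' is off: the quantifiers (``some $r$'' versus ``every $r$'') are preserved on both sides; the dual space appears in the statement simply because at $s=\tfrac 12$ one has $\vartheta _{r,1/2}=\vartheta '_{r,1/2}$, so $\maclA _{0,1/2}'$ is the ``some $r$'' space and $\maclA _{0,1/2}$ the ``every $r$'' space. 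Once you replace your false target estimate by the trivial two-sided bound above, the rest of your argument goes through and reproduces the paper's proof.
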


\par

The next result deals with the case when $s=\flat _\sigma$ with
$\sigma \in (0,\frac 12)$.

\par

\begin{thm}\label{Thm:MainResult5}
Let $F\in A(\cc d)$, $\chi \in \maclR ^\infty (\cc d)$,
$\sigma \in (0,\frac 12)$ and let
$$
\sigma _0 = \frac \sigma{1-2\sigma}. 
$$
Then the following is true:
\begin{itemize}
\item[(i)]
The following conditions are equivalent:
\begin{enumerate}
\item $F\in \maclA _{\flat _\sigma}(\cc d)$ ($F\in
\maclA _{0,\flat _\sigma}(\cc d)$);

\vrum

\item The estimate \eqref{Eq:PilSpaceCharEst1} holds for some
(for every) $r\in \rr d_+$;

\vrum

\item There exists
$F_0\in \maclA _{\flat _{\sigma _0}}(\cc d)$
($F_0\in \maclA _{0,\flat _{\sigma _0}}(\cc d)$) such that
$F = \Pi _A(F_0\cdot \chi )$;
\end{enumerate}

\vrum

\item[(ii)]
The mappings $F\mapsto \Pi _A(F\cdot \chi )$ from
$\maclA _{\flat _{\sigma _0}}(\cc d)$ to
$\maclA _{\flat _{\sigma}}(\cc d)$ and
from
$\maclA _{0,\flat _{\sigma _0}}(\cc d)$ to
$\maclA _{0,\flat _{\sigma}}(\cc d)$
are homeomorphisms.
\end{itemize}
\end{thm}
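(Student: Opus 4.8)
The plan is to reduce the entire statement (the case $\sigma\in(0,\frac12)$, with $\sigma_0=\sigma/(1-2\sigma)$) to a single computation: for $\chi\in\maclR^\infty_{t_1,t_2}(\cc d)$ the map $F_0\mapsto\Pi_A(F_0\cdot\chi)$ acts \emph{diagonally} on power series coefficients. First I would observe that, since $\chi$ is bounded with compact support and $F_0\in A(\cc d)$ is continuous, the product $F_0\cdot\chi$ is bounded and compactly supported, hence lies in $B^2(\cc d)$; therefore $\Pi_A(F_0\cdot\chi)\in A^2(\cc d)$ is well defined, and because $\Pi_A$ is the orthogonal projection of $B^2(\cc d)$ onto $A^2(\cc d)$ whose orthonormal basis is $\{e_\alpha\}$ from \eqref{BargmannHermite}, one has $c(\Pi_A(F_0\chi),\alpha)=(F_0\chi,e_\alpha)_{B^2}$. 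Expanding $F_0=\sum_\beta c(F_0,\beta)e_\beta$, which converges uniformly on $\supp\chi$, and using that $\chi(z)=\chi_0(|z_1|,\dots,|z_d|)$ is radial in each complex variable, the angular integrations annihilate every off-diagonal term, giving
$$c(\Pi_A(F_0\chi),\alpha)=\lambda_\alpha\,c(F_0,\alpha),\qquad \lambda_\alpha=\frac{\pi^{-d}}{\alpha!}\int_{\cc d}|w^\alpha|^2\chi(w)e^{-|w|^2}\,d\lambda(w).$$

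The heart of the proof is then the asymptotics of $\lambda_\alpha$. By Fubini, in the spirit of Lemma \ref{Lemma:SplitWeights}, the multiplier factorises as $\lambda_\alpha=\prod_{j}\lambda_{\alpha_j}$, and the substitution $t=|w_j|^2$ turns each factor into $\lambda_{\alpha_j}=\frac{1}{\alpha_j!}\int_0^{t_{2,j}^2}t^{\alpha_j}\chi_{0,j}(\sqrt t)e^{-t}\,dt$. Bounding $\chi$ above by $\|\chi\|_\infty$ on $\overline{D_{t_2}(0)}$ and below by $c$ on $D_{t_1}(0)$ produces the two-sided estimate
$$\frac{(t_1^2)^\alpha}{\alpha!\,\prod_j(\alpha_j+1)}\lesssim\lambda_\alpha\lesssim\frac{(t_2^2)^\alpha}{\alpha!},$$
so that $\lambda_\alpha\asymp C^\alpha/\alpha!$ up to subexponential factors. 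The decisive arithmetic fact is the identity $\frac1{2\sigma}-\frac1{2\sigma_0}=1$, which holds exactly for $\sigma_0=\sigma/(1-2\sigma)$: multiplying the coefficient bound $|c(F_0,\alpha)|\lesssim r^\alpha\alpha!^{-1/(2\sigma_0)}$ (which characterises $\maclA_{\flat_{\sigma_0}}$ by Definition \ref{Def:PowerSeriesSpaces}) by $\lambda_\alpha$ yields $|c(F,\alpha)|\lesssim(Cr)^\alpha\alpha!^{-1-1/(2\sigma_0)}=(Cr)^\alpha\alpha!^{-1/(2\sigma)}$, i.e. $F\in\maclA_{\flat_\sigma}$.

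These ingredients give part (i). The equivalence of (1) and (2) is Proposition \ref{Prop:AnFuncSpaceChar} applied with $s=\flat_\sigma$, since \eqref{Eq:PilSpaceCharEst1} is exactly the growth estimate $e^{M_{1,r,\flat_\sigma}}$. For (1)$\Rightarrow$(3) I would, given $F\in\maclA_{\flat_\sigma}$, define $F_0$ through $c(F_0,\alpha)=c(F,\alpha)/\lambda_\alpha$; the lower bound on $\lambda_\alpha$, together with $\alpha!^{1-1/(2\sigma)}=\alpha!^{-1/(2\sigma_0)}$ and the observation that the polynomial factor $\prod_j(\alpha_j+1)\le 2^{|\alpha|}$ is absorbed into the exponential base, shows $F_0\in\maclA_{\flat_{\sigma_0}}$, while $\Pi_A(F_0\chi)=F$ by the diagonal formula. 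The implication (3)$\Rightarrow$(1) is the direct multiplier estimate already recorded. The Beurling versions (the parenthetical statements) run identically, with ``for some $r$'' replaced by ``for every $r$'' throughout; this is compatible because a fixed base $C^\alpha$ and the factor $2^{|\alpha|}$ are absorbed by rescaling $r$.

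For part (ii), the diagonal multiplier $(c_\alpha)\mapsto(\lambda_\alpha c_\alpha)$ and its inverse $(c_\alpha)\mapsto(c_\alpha/\lambda_\alpha)$ are continuous for the seminorms defining the inductive limit topology of $\maclA_{\flat_\sigma}$ and the projective limit topology of $\maclA_{0,\flat_\sigma}$, precisely because of the two-sided control of $\lambda_\alpha$; combined with the bijection from (i) this yields the asserted homeomorphisms. I expect the principal obstacle to lie in the sharp two-sided estimate for $\lambda_\alpha$ --- isolating the exact $\alpha!^{-1}$ decay and confirming that all remaining errors are subexponential and hence absorbable --- since once the operator is diagonalised the rest is bookkeeping matched to the relation $\sigma_0=\sigma/(1-2\sigma)$.
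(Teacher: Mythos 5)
Your proposal is correct and takes essentially the same route as the paper: your diagonal multiplier $\lambda_\alpha$ is precisely the reciprocal of the quantity $\varsigma_\alpha\alpha!^{\frac 12}$ from Lemma \ref{Lemma:HermiteGroch} and \eqref{Eq:CoeffF0FComp}, your two-sided bound on $\lambda_\alpha$ reproduces \eqref{Eq:Estvarsigma}, and your arithmetic identity $\frac 1{2\sigma}-\frac 1{2\sigma_0}=1$ is exactly the paper's substitution $\tau=\frac 1{2\sigma}$, $\tau-1=\frac 1{2\sigma_0}$ in Proposition \ref{Prop:MainResult345}. The only cosmetic difference is that you verify membership in the spaces via the coefficient bounds of Definition \ref{Def:PowerSeriesSpaces}, whereas the paper passes between growth estimates and coefficients with explicit radii through Lemma \ref{Lemma:Omega34Equiv}.
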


\par

In the next result we consider the case when $s\in [0,\frac 12)$
is real.

\par

\begin{thm}\label{Thm:MainResult6}
Let $F\in A(\cc d)$, $\chi \in \maclR ^\infty (\cc d)$,
$s \in [0,\frac 12)$.
Then the following is true:
\begin{itemize}
\item[(i)]
The following conditions are equivalent:
\begin{enumerate}
\item $F\in \maclA _s(\cc d)$ ($F\in
\maclA _{0,s}(\cc d)$);

\vrum

\item The estimate \eqref{Eq:PilSpaceCharEst2} holds for some
(for every) $r\in \rr d_+$;

\vrum

\item There exists
$F_0\in \maclA _{s}(\cc d)$
($F_0\in \maclA _{0,s}(\cc d)$) such that
$F = \Pi _A(F_0\cdot \chi )$;
\end{enumerate}

\vrum

\item[(ii)]
The mappings $F\mapsto \Pi _A(F\cdot \chi )$ from
$\maclA _{s}(\cc d)$ to $\maclA _{s}(\cc d)$ and
from $\maclA _{0,s}(\cc d)$ to
$\maclA _{0,s}(\cc d)$
are homeomorphisms.
\end{itemize}
\end{thm}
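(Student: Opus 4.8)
The plan is to establish the three-way equivalence in (i) by proving the cycle $(1)\Rightarrow(2)\Rightarrow(3)\Rightarrow(1)$ (in both the Roumieu and Beurling variants simultaneously, reading the parenthetical clauses in parallel), and then to upgrade the resulting bijection $F_0\mapsto \Pi_A(F_0\cdot\chi)$ to a homeomorphism for part (ii). The implications $(1)\Leftrightarrow(2)$ are essentially bookkeeping: by Proposition~\ref{Prop:AnFuncSpaceChar}, membership $F\in\maclA_s(\cc d)$ for $s\in[0,\frac12)$ is exactly the growth bound $|F(z)|\lesssim e^{M_{1,r,s}(z)}$ for some $r$, and $M_{1,r,s}(z)=\sum_j r_j(\log\eabs{z_j})^{\frac1{1-2s}}$ is precisely the right-hand side of \eqref{Eq:PilSpaceCharEst2}; the Beurling case $\maclA_{0,s}$ swaps ``for some $r$'' with ``for every $r$'' via $M^0_{1,r,s}=M_{1,r,s}$ (since $s\ne\frac12$ here). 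So $(1)\Leftrightarrow(2)$ requires only citing the proposition and matching quantifiers.

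The substantive content is the passage through (3). For $(3)\Rightarrow(1)$ I would compute $\Pi_A(F_0\cdot\chi)$ directly using the integral formula \eqref{reproducing}. Because $\chi\in\maclR^\infty(\cc d)$ is compactly supported in some polydisc $\overline{D_{t_2}(0)}$, radial in each variable, and bounded, the product $F_0\cdot\chi$ is an $L^\infty$ function of compact support, so the reproducing integral is an absolutely convergent integral of $F_0(w)\chi(w)e^{(z,w)}$ against $d\mu(w)=\pi^{-d}e^{-|w|^2}d\lambda(w)$. I would expand $F_0\in\maclA_s(\cc d)$ (respectively $\maclA_{0,s}$) in its power series $\sum_\alpha c(F_0,\alpha)e_\alpha$ and use the radial symmetry of $\chi$ together with the orthogonality $\int e_\alpha\overline{e_\beta}\,\chi\,d\mu=0$ for $\alpha\ne\beta$ to diagonalize: $\Pi_A(F_0\cdot\chi)=\sum_\alpha c(F_0,\alpha)\,\lambda_\alpha\,e_\alpha$, where the multipliers $\lambda_\alpha=\int |e_\alpha(w)|^2\chi(w)\,d\mu(w)$ are exactly the moments appearing in \eqref{omegavarthetaRel}$'$. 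The key estimate is then that $\log\lambda_\alpha$ behaves like $-c'|\alpha|^{\frac1{2s}}$-type corrections — but for $s<\frac12$ the decay $\vartheta_{r,s}$ involves $e^{-r^{-1}\alpha^{1/(2s)}}$, and I expect $\lambda_\alpha$ to contribute comparable logarithmic-in-$z$ growth via Theorem~\ref{cA2sCharExt}. In fact the cleanest route is to invoke Theorem~\ref{cA2sCharExt} with $\omega_0$ chosen so that $\chi$ realizes the split weight of Lemma~\ref{Lemma:SplitWeights}: this identifies $\maclA^2_{[\vartheta]}=A^2_{(\omega)}$ and converts the coefficient bound on $c(F_0,\alpha)$ into a growth bound on $\Pi_A(F_0\cdot\chi)$ of the form \eqref{Eq:PilSpaceCharEst2}, giving $(3)\Rightarrow(1)$ with the \emph{same} order $s$ on both sides.

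For $(1)\Rightarrow(3)$ — the direction producing a compactly supported preimage — I would, given $F\in\maclA_s(\cc d)$ with coefficients $c(F,\alpha)\lesssim\vartheta_{r,s}(\alpha)$, fix any $\chi\in\maclR^\infty_{t_1,t_2}(\cc d)$, compute the corresponding moments $\lambda_\alpha>0$ (strictly positive since $\chi\ge c>0$ on $D_{t_1}(0)$), and \emph{define} $F_0$ by the power series $F_0=\sum_\alpha c(F,\alpha)\lambda_\alpha^{-1}e_\alpha$. The task is to verify that $F_0\in\maclA_s(\cc d)$ (respectively $\maclA_{0,s}$), i.e. that dividing by $\lambda_\alpha$ does not destroy the coefficient decay, which again follows from the two-sided control of $\lambda_\alpha$ furnished by Theorem~\ref{cA2sCharExt} and Lemma~\ref{Lemma:SplitWeights}; one then checks $\Pi_A(F_0\cdot\chi)=F$ by the diagonalization above. \textbf{The main obstacle} I anticipate is the asymptotic analysis of the moments $\lambda_\alpha=\frac1{\alpha!}\int\omega_{0,j}(r)^2 r^\alpha\,dr$ associated to a given $\chi$: I must show that for $s\in[0,\frac12)$ these moments stay within multiplicative factors that are harmless relative to the scale $\vartheta_{r,s}$, so that the self-map on coefficient space is bounded with bounded inverse between the relevant $\maclH_{s;r}$-type Banach spaces. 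Once that uniform two-sided moment estimate is in hand, part (ii) follows formally: $F\mapsto\Pi_A(F\cdot\chi)$ is the diagonal multiplier $c\mapsto(\lambda_\alpha c_\alpha)$, which is continuous with continuous inverse $c\mapsto(\lambda_\alpha^{-1}c_\alpha)$ on each $\maclH_{s;r}$, hence a homeomorphism on the inductive (respectively projective) limits $\maclA_s$ and $\maclA_{0,s}$, completing the proof.
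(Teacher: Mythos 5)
Your plan is, at its core, the paper's own argument in different clothing: the diagonalization you describe, $c\bigl(\Pi_A(F_0\cdot\chi),\alpha\bigr)=\lambda_\alpha\, c(F_0,\alpha)$ with $\lambda_\alpha=\int |e_\alpha|^2\chi \, d\mu$, is exactly Lemma \ref{Lemma:HermiteGroch} (there $\Pi_A(\varsigma _\alpha z^\alpha \chi )=e_\alpha$, i.e. $\lambda _\alpha = (\varsigma _\alpha \alpha !^{\frac 12})^{-1}$, and \eqref{Eq:CoeffF0FComp} is your multiplier relation). The genuine gap is at the step you yourself flag as the main obstacle: the two-sided asymptotics for $\lambda_\alpha$ cannot be obtained from Theorem \ref{cA2sCharExt} and Lemma \ref{Lemma:SplitWeights} as you propose. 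Theorem \ref{cA2sCharExt} carries the hypothesis \eqref{varthetaCond}, that $r^{|\alpha |}(\alpha !)^{-\frac 12}\lesssim \vartheta (\alpha )$ for \emph{every} $r>0$, and this fails precisely for the weight realized by $\chi$: since $\chi$ is bounded with support in $\overline{D_{t_2}(0)}$, its moments satisfy $\lambda _\alpha \lesssim t_2^{2\alpha}\alpha !^{-1}$ (up to polynomial factors), so the associated $\vartheta (\alpha )=\lambda _\alpha ^{\frac 12}\lesssim t_2^{\alpha}(\alpha !)^{-\frac 12}$ violates \eqref{varthetaCond} as soon as $r>\max _j t_{2,j}$. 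Lemma \ref{Lemma:SplitWeights} only tensorizes weights and gives no bounds at all. So the tools you cite are inapplicable exactly where you need them (and even the conclusion of Theorem \ref{cA2sCharExt}, a norm identity, would not by itself yield the pointwise comparison of $\lambda _\alpha$ with the scale $\vartheta _{r,s}$ that your absorption argument requires).

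The repair is elementary and is what the paper does in the proof of Lemma \ref{Lemma:HermiteGroch}: bound $\lambda _\alpha$ from below by integrating $u^{2\alpha}$ over the polydisc where $\chi \ge c>0$, and from above by $\nm \chi {L^\infty}$ times the integral over the support, which gives, within constants and factors $\prod _j(\alpha _j+1)^{\pm 1}$, that $t_1^{2\alpha}\alpha !^{-1}\lesssim \lambda _\alpha \lesssim t_2^{2\alpha}\alpha !^{-1}$ (this is \eqref{Eq:Estvarsigma}). With this in hand, the rest of your proposal does close: since $\frac 1{2s}>1$ for $s\in (0,\frac 12)$, the factors $t^{\pm 2\alpha}\alpha !^{\mp 1}$ are absorbed by an arbitrarily small change of $r$ in $\vartheta _{r,s}$, preserving both the Roumieu and Beurling quantifiers, and part (ii) follows from the diagonal form of the map on the inductive respectively projective limits. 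You must, however, treat $s=0$ separately, since the scale $\alpha ^{\frac 1{2s}}$ degenerates there; the diagonal map trivially preserves polynomials, which is the content of Proposition \ref{Prop:MainResult6No2}. Once repaired, your route is in fact slightly more economical than the paper's: by staying at the coefficient level (Definition \ref{Def:PowerSeriesSpaces}) and citing Proposition \ref{Prop:AnFuncSpaceChar} only for (1)$\Leftrightarrow$(2), you never need Lemma \ref{Lemma:MainResult6A}, whereas the paper phrases Proposition \ref{Prop:MainResult6} in terms of the growth bounds \eqref{Eq:PilSpaceCharEst2} and therefore must convert between growth and coefficients through that lemma --- which is where Theorem \ref{cA2sCharExt} is legitimately applied, namely to the non-compactly supported weights built from $e^{-r(\log \eabs {\, \cdot \,})^{1/(1-2s)}}$.
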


\par

The previous theorems are essentially consequences of Propositions
\ref{Prop:MainResult1}--\ref{Prop:MainResult6}, where more
detailed information about involved constants are given.

\par

\begin{prop}\label{Prop:MainResult1}
Let $F\in A(\cc d)$, $\sigma =1$, $s>1$ and $r\in \rr  d_+$. Then the
following conditions are equivalent:
\begin{enumerate}
%
%
\item For some $r_0\in \rr d_+$ such that $r_0<r$, \eqref{Eq:PilSpaceCharEst1}
holds with $r_0$ in place of $r$;

\vrum

\item For some $t_1\in \rr d_+$ such that $t_1<r$, every
$t_2\in \rr d_+$ with $t_1\le t_2 <r$ and every $\chi \in
\maclR _{t_1,t_2}^\infty (\cc d)$, there exists $F_0\in
A(\overline{D_r(0)})$ such that
$F = \Pi _A(F_0\cdot \chi )$;

\vrum

\item For some $t_1\in \rr d_+$ such that $t_1<r$ and every
$t_2\in \rr d_+$ with $t_1\le t_2 <r$, there exist $\chi \in
\maclR _{t_1,t_2}^\infty (\cc d)$ and $F_0\in
A(\overline{D_r(0)})$ such that $F = \Pi _A(F_0\cdot \chi _{D_r(0)})$;

\vrum

\item For some $r_0\in \rr d_+$ such that $r_0<r$,  there exists $F_0\in
\mascE '(D_r(0))\cap L^\infty (\cc d)$ such that $F = \Pi _AF_0$;

\vrum

\item For some $r_0\in \rr d_+$ such that $r_0<r$,  there exists $F_0\in
\maclE '_s(D_{r_0}(0))$ such that $F = \Pi _AF_0$.
\end{enumerate}
\end{prop}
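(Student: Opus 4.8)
The plan is to prove the five equivalences as a cycle, the whole argument resting on one coefficient identity produced by the radial symmetry of $\chi$. For $\sigma=1$ condition (1) reads $|F(z)|\lesssim e^{r_{0,1}|z_1|+\cdots+r_{0,d}|z_d|}$, which by Proposition \ref{Prop:AnFuncSpaceChar} (the case $s=\flat _1$) is equivalent to the coefficient bound $|c(F,\alpha)|\lesssim r_0^\alpha/\sqrt{\alpha!}$; the passage between the sup-norm and the coefficient estimate is a Cauchy-estimate plus Stirling computation in which the polynomial-in-$\alpha$ losses are absorbed by enlarging or shrinking the radius infinitesimally (all radius inequalities below are strict, so this is harmless). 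The crucial computation is that, for $F_0\in A(\overline{D_r(0)})$ and $\chi\in\maclR^\infty_{t_1,t_2}(\cc d)$, expanding $F_0$ in its power series and using that $\chi(w)=\chi _0(|w_1|,\dots,|w_d|)$ depends only on the moduli makes the angular integrations force $\beta=\alpha$ and annihilate all cross terms. Setting
\[
I_\alpha(\chi)\equiv\pi^{-d}\int_{\cc d}|w^\alpha|^2\chi(w)e^{-|w|^2}\,d\lambda(w)>0,
\]
this yields $(F_0\chi,e_\alpha)_{B^2}=c(F_0,\alpha)I_\alpha(\chi)/\alpha!$; since $\Pi _A$ is the orthogonal projection onto $A^2(\cc d)$ and $e_\alpha\in A^2(\cc d)$, these numbers are exactly the coefficients of $\Pi _A(F_0\chi)$. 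Reducing to one variable and estimating the incomplete-Gamma integrals $\int_0^{t}\rho^{2n+1}e^{-\rho^2}\,d\rho$ against the support and lower bound of $\chi$ gives $t_1^{2\alpha}\lesssim I_\alpha(\chi)\lesssim t_2^{2\alpha}$ up to factors polynomial in $\alpha$.

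With these tools I carry out the hard implication $(1)\Rightarrow(2)$ by \emph{defining} $F_0$ through its coefficients $c(F_0,\alpha)\equiv c(F,\alpha)\alpha!/I_\alpha(\chi)$. Inserting $|c(F,\alpha)|\lesssim r_0^\alpha/\sqrt{\alpha!}$ and the lower bound $I_\alpha(\chi)\gtrsim t_1^{2\alpha}$ yields $|c(F_0,\alpha)|/\sqrt{\alpha!}\lesssim(r_0/t_1^2)^\alpha$ up to polynomial factors, so $F_0$ extends analytically past $\overline{D_r(0)}$ precisely when $t_1^2/r_0>r$. Since $r_0<r$ gives $\sqrt{rr_0}<r$, choosing $t_1\in(\sqrt{rr_0},r)$ makes this hold simultaneously for every $t_2\in[t_1,r)$ and every $\chi\in\maclR^\infty_{t_1,t_2}(\cc d)$, which is the quantifier pattern of (2); the key identity then shows $\Pi _A(F_0\chi)$ and $F$ have equal coefficients, hence coincide. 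The converse estimate runs through the same identity: if $F=\Pi _A(F_0\chi)$ with $F_0\in A(\overline{D_r(0)})$, then $|c(F_0,\alpha)|/\sqrt{\alpha!}\lesssim\rho^{-\alpha}$ for some $\rho>r$, and $I_\alpha(\chi)\lesssim t_2^{2\alpha}$ gives $|c(F,\alpha)|\lesssim(t_2^2/\rho)^\alpha/\sqrt{\alpha!}$ with $t_2^2/\rho<r$, which is (1).

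The remaining links are soft. The step $(2)\Rightarrow(3)$ is a trivial instantiation of a single admissible $\chi$, and $(3)\Rightarrow(4)$ holds because $F_0\chi$ is bounded and supported in $\overline{D_{t_2}(0)}\subset D_r(0)$, hence lies in $\mascE'(D_r(0))\cap L^\infty(\cc d)$. The implication $(4)\Rightarrow(1)$ is the direct size estimate of the reproducing integral \eqref{reproducing}: the support of $F_0$ in a polydisc of radii below $r$ bounds $|(z,w)|$ and produces linear exponential growth with some radius $<r$. Finally (5) is linked to (1) by itself: the bounded compactly supported function $F_0\chi$ furnished by the construction (with $t_2$ taken below some $r_0<r$) is a Gevrey distribution of every order $s>1$, giving $(1)\Rightarrow(5)$, whereas $(5)\Rightarrow(1)$ is the Gevrey Paley--Wiener estimate: pairing $F_0\in\maclE_s'(D_{r_0}(0))$ with the entire test function $w\mapsto e^{(z,w)-|w|^2}$ and using the localized Gevrey seminorms yields $|F(z)|\lesssim e^{r_0'|z|}$ for any $r_0'\in(r_0,r)$, the factor $e^{(z,w)}$ being harmless because $s>1$.

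I expect the main obstacle to be the quantitative bookkeeping of radii: one must verify that a single choice of $t_1$ works uniformly over all $t_2$ and all admissible $\chi$, and that the polynomial-in-$\alpha$ losses from Stirling and from the incomplete-Gamma estimates are genuinely absorbable because $t_1^2/r_0>r$ and $t_2^2/\rho<r$ are strict. Obtaining the two-sided bound on $I_\alpha(\chi)$ with constants uniform over $\chi\in\maclR^\infty_{t_1,t_2}(\cc d)$, and justifying the term-by-term integration underlying the key identity, are the technical points demanding the most care.
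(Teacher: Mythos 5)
Your proposal is correct, and at its core it is the same proof as the paper's: the same cycle of soft implications with (1)$\Rightarrow$(2) as the only hard step, the same diagonalization of $F_0\mapsto \Pi _A(F_0\cdot \chi )$ coming from the radial symmetry of $\chi$, and the same radius bookkeeping. Concretely, your identity $c(\Pi _A(F_0\chi ),\alpha )=c(F_0,\alpha )I_\alpha (\chi )/\alpha !$ is Lemma \ref{Lemma:HermiteGroch} in disguise (the paper's normalization is $\varsigma _\alpha =\sqrt{\alpha !}/I_\alpha (\chi )$), your two-sided bound on $I_\alpha (\chi )$ is \eqref{Eq:Estvarsigma}, your definition of $F_0$ by coefficient division is \eqref{Eq:F0Def} together with \eqref{Eq:CoeffF0FComp}, your condition $t_1>\sqrt{r_0r}$ is the paper's requirement $r_1r_4<r_2^2$, and your Gevrey pairing argument for (5)$\Rightarrow$(1) is precisely the proof of Proposition \ref{Prop:InclUltraDist}. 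Where you genuinely differ is in how the two supporting facts are established: you obtain the diagonal identity from the orthogonal-projection property of $\Pi _A$ on $B^2(\cc d)$ (legitimate, since $F_0\cdot \chi$ is bounded with compact support), whereas the paper computes $\Pi _A(z^\alpha \chi )$ directly in polar coordinates via Taylor expansion of the kernel; and you derive the equivalence of (1) with $|c(F,\alpha )|\lesssim r_0^\alpha /\sqrt{\alpha !}$ from multivariate Cauchy estimates plus Stirling, whereas the paper invokes Lemma \ref{Lemma:Omega34Equiv}, which rests on the weighted-$L^2$ machinery of Lemmata \ref{Lemma:EstimateL2LInfty}--\ref{Lemma:GammaReformulation1}. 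Your route is more elementary and self-contained for this particular proposition (the clean factorial arithmetic is available precisely because $\sigma =1$, i.e.\ $\tau =\frac 12$), while the paper's heavier machinery is uniform in $\tau$ and is reused for Theorems \ref{Thm:MainResult3}--\ref{Thm:MainResult6}. Two small remarks: your worry about uniformity of constants over $\chi$ is unnecessary, since in condition (2) the function $F_0$ may depend on $\chi$ and only the radius of convergence, which is governed by $t_1$ alone, must not; and your separate proof of (3)$\Rightarrow$(1) through the key identity is redundant given (3)$\Rightarrow$(4)$\Rightarrow$(1), though harmless.
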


\par

Theorems \ref{Thm:MainResult3}--\ref{Thm:MainResult5} essentially follows from
the following proposition.

\par

\begin{prop}\label{Prop:MainResult345}
Let $\tau >\frac 12$, $r,t_1,t_2\in \rr  d_+$ be such
that $t_1\le t_2$, and let $\chi \in \maclR _{t_1,t_2}^\infty (\cc d)$. Then the
following is true:
\begin{enumerate}
\item Let $F\in A(\cc d)$ be such that
\begin{equation}\label{Eq:MainResult3Eq1}
|F(z)| \lesssim e^{r_{0,1}|z_1|^{\frac {2}{2\tau +1}}+\cdots +r_{0,d}|z_d|
^{\frac {2}{2\tau +1}}}
\end{equation}
holds for some $r_0\in \rr d_+$ such that $r_0<r$. Then for some
$r_0\in \rr d_+$ such that $r_0<r$, there exists $F_0\in A(\cc d)$
such that $F=\Pi _A(F_0\cdot \chi )$ and
\begin{equation}\label{Eq:MainResult3Eq2}
|F_0(z)| \lesssim e^{R_{0,1}|z_1|^{\frac {2}{2\tau -1}}+\cdots +R_{0,d}|z_d|
^{\frac {2}{2\tau -1}} },
\end{equation}
where
\begin{equation}\label{Eq:MainResult3Eq3}
R_0 =
\frac {2\tau -1}{2}
\left (
\frac {2r_0}{2\tau +1}
\right )
^{{{\frac {2\tau +1}{2\tau -1}}}}
t_1^{-\frac {4}{2\tau -1}}
\text ;
\end{equation}

\vrum

\item Let $r_0,R_0\in \rr d_+$ be such that $r_0<r$ and
\begin{equation}\label{Eq:MainResult3Eq5}
R_0 =
\frac {2\tau -1}{2}
\left (
\frac {2r_0}{2\tau +1}
\right )
^{{{\frac {2\tau +1}{2\tau -1}}}}
t_2^{-\frac {4}{2\tau -1}},
\end{equation}
$F_0\in A(\cc d)$ be such that
\eqref{Eq:MainResult3Eq2} holds
and let $F=\Pi _A(F_0\cdot \chi )$. Then $F\in A(\cc d)$ and
satisfies \eqref{Eq:MainResult3Eq2}
for some $r_0\in \rr d_+$ such that $r_0<r$.
\end{enumerate}
\end{prop}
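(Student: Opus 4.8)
The plan is to reduce everything to the observation that, for a radial symmetric $\chi$ as in Definition \ref{Def:RClass}, the operator $F_0\mapsto \Pi _A(F_0\cdot \chi )$ is \emph{diagonal} with respect to the orthonormal monomial basis $\{ e_\alpha\}$ of \eqref{BargmannHermite}. Writing $F_0 = \sum _\alpha c(F_0,\alpha )e_\alpha$ and inserting \eqref{reproducing}, I would expand $e^{(z,w)} = \sum _\beta z^\beta \overline{w}^{\beta}/\beta !$ and integrate in polar coordinates in each variable $w_j$. Since $\chi (w) = \chi _0(|w_1|,\dots ,|w_d|)$ depends only on the moduli, the angular integrals $\int _0^{2\pi}e^{i(\alpha _j-\beta _j)\theta _j}\, d\theta _j$ vanish unless $\alpha =\beta$, which gives
\[
\Pi _A(e_\alpha \cdot \chi ) = \lambda _\alpha \, e_\alpha ,
\qquad
\lambda _\alpha = \frac {\pi ^{-d}}{\alpha !}\int _{\cc d}|w^\alpha |^2\chi (w)e^{-|w|^2}\, d\lambda (w)>0,
\]
and hence $\Pi _A(F_0\cdot \chi ) = \sum _\alpha c(F_0,\alpha )\lambda _\alpha e_\alpha$. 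Thus part (1) amounts to \emph{dividing} the coefficients of $F$ by $\lambda _\alpha$ (legitimate since each $\lambda _\alpha >0$) and part (2) to \emph{multiplying} by $\lambda _\alpha$; in both cases $F=\Pi _A(F_0\cdot \chi )$ is automatic and the whole content is a sharp estimate of the multiplier $\lambda _\alpha$.

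Next I would establish two-sided bounds for $\lambda _\alpha$. Because $D_{t_1}(0)$ and $\overline{D_{t_2}(0)}$ are polydiscs, the integral factors over the variables, and the lower bound $\chi \ge c$ on $D_{t_1}(0)$ together with $\chi \le M$ on $\supp \chi \subseteq \overline{D_{t_2}(0)}$ reduce $\lambda _\alpha$ to products of incomplete Gamma integrals $\int _0^{t^2}s^{\alpha _j}e^{-s}\, ds$. A Laplace/endpoint analysis of these (for large $\alpha _j$ the integrand increases on $[0,t^2]$, so the mass concentrates at the endpoint) yields
\[
c\, \omega _{t_1}(\alpha )\, p_-(\alpha )\ \le\ \lambda _\alpha \ \le\ M\, \omega _{t_2}(\alpha )\, p_+(\alpha ),
\qquad
\omega _t(\alpha ) = \prod _{j=1}^d\frac {(t_j^2)^{\alpha _j}}{\alpha _j!},
\]
where $p_\pm$ are subexponential (polynomial-type) factors. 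So, up to subexponential corrections, $\lambda _\alpha$ behaves like $\prod _j(t_j^2)^{\alpha _j}/\alpha _j!$ with $t=t_1$ from below and $t=t_2$ from above.

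I would then invoke the growth$\leftrightarrow$coefficient dictionary underlying Proposition \ref{Prop:AnFuncSpaceChar}: by Cauchy estimates and optimization in each variable (a Legendre transform), an entire $G$ with $|G(z)|\lesssim \exp (\sum _j\rho _j|z_j|^{p})$ has coefficients controlled coordinatewise in terms of $\rho _j$ and $p$, and conversely. For $F$ this is the exponent $p=\frac {2}{2\tau +1}$ of \eqref{Eq:MainResult3Eq1}, and for $F_0$ the exponent $p=\frac {2}{2\tau -1}$ of \eqref{Eq:MainResult3Eq2}. Combining: in part (1) set $c(F_0,\alpha )=c(F,\alpha )/\lambda _\alpha$, bound it using the \emph{lower} estimate (with $t_1$), and translate back into a growth estimate for $F_0$; the two Legendre transforms and the factor $\omega _{t_1}$ compose to give precisely the constant $R_0$ of \eqref{Eq:MainResult3Eq3}. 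Since $\tau >\frac 12$ the resulting coefficients grow slower than $\sqrt {\alpha !}$, so $F_0$ is genuinely entire. Part (2) is the mirror image: $F=\sum _\alpha c(F_0,\alpha )\lambda _\alpha e_\alpha$, and the \emph{upper} estimate (with $t_2$) converts the growth of $F_0$ back into \eqref{Eq:MainResult3Eq1}, producing the constant \eqref{Eq:MainResult3Eq5}.

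The main obstacle is this last, quantitative step: one must track the constants through the composition of the two Legendre transforms and the multiplier $\omega _{t_1}$ (resp. $\omega _{t_2}$) accurately enough to land exactly on \eqref{Eq:MainResult3Eq3} (resp. \eqref{Eq:MainResult3Eq5}), whereas the subexponential factors $p_\pm (\alpha )$ and the Stirling corrections are harmless but must be absorbed. This absorption is exactly the role of the strict slack $r_0<r$ and the phrasing ``for some $r_0$'': one matches the leading exponential order exactly and swallows the lower-order errors into a slightly larger admissible constant that still stays below $r$.
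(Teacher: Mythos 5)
Your proposal is correct and follows essentially the same route as the paper: your diagonalization $\Pi _A(e_\alpha \cdot \chi )=\lambda _\alpha e_\alpha$ with two-sided bounds on $\lambda _\alpha$ (lower bound from $t_1$, upper from $t_2$) is exactly the paper's Lemma \ref{Lemma:HermiteGroch} in the form $\lambda _\alpha = (\varsigma _\alpha \alpha !^{\frac 12})^{-1}$ together with \eqref{Eq:Estvarsigma} and \eqref{Eq:CoeffF0FComp}, your growth$\leftrightarrow$coefficient dictionary is the paper's Lemma \ref{Lemma:Omega34Equiv}, and the composition with the slack $r_0<r$ absorbing the polynomial factors is precisely how the paper lands on the constants \eqref{Eq:MainResult3Eq3} and \eqref{Eq:MainResult3Eq5}. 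The only cosmetic difference is that you would prove the dictionary by Cauchy estimates and a Legendre-transform optimization, whereas the paper derives it from $L^2$-norm identities (Theorem \ref{cA2sCharExt} via Lemma \ref{Lemma:EstFtimesEfunction}) and Stirling's formula; both yield the same leading constants.
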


\par

Theorem \ref{Thm:MainResult6} follows from the following two propositions,
where the first one concerns the case when $s>0$ and the second
one make a more detailed explanation of the case $s=0$, i.{\,}e. the case
of analytic polynomials.

\par

\begin{prop}\label{Prop:MainResult6}
Let $s\in (0,\frac 12)$, $r\in \rr d_+$, and let $\chi \in \maclR ^\infty (\cc d)$.
Then the following is true:
\begin{enumerate}
\item Suppose $F\in A(\cc d)$ satisfies
\begin{equation}\label{Eq:MainResult6A}
|F(z)| \lesssim e^{r_{0,1} (\log \eabs {z_1})^{\frac 1{1-2s}} +\cdots +
r_{0,d} (\log \eabs {z_d})^{\frac 1{1-2s}}}
\end{equation}
for some $r_0\in \rr d_+$ such that $r_0<r$. Then there is an
$F_0\in A(\cc d)$ such that $F=\Pi _A(F_0\cdot \chi )$ and
\begin{equation}\label{Eq:MainResult6B}
|F_0(z)| \lesssim e^{r_{0,1} (\log \eabs {z_1})^{\frac 1{1-2s}} +\cdots +
r_{0,d} (\log \eabs {z_d})^{\frac 1{1-2s}}}
\end{equation}
for some $r_0\in \rr d_+$ such that $r_0<r$;

\vrum

\item Suppose $F_0\in A(\cc d)$ satisfies \eqref{Eq:MainResult6B}
for some $r_0\in \rr d_+$ such that $r_0<r$, and let
$F=\Pi _A(F_0\cdot \chi )$. Then $F\in A(\cc d)$ and satisfies
\eqref{Eq:MainResult6A} for some $r_0\in \rr d_+$ such that $r_0<r$.
\end{enumerate}
\end{prop}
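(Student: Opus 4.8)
Let me understand what needs to be proven. We have $F_0 \mapsto \Pi_A(F_0 \cdot \chi)$, and we want to show this gives a bijection (in the estimate sense) between functions satisfying the log-type bound. The key difference from Prop \ref{Prop:MainResult345}: here the weight $M_{1,r,s}(z) = \sum r_j(\log\langle z_j\rangle)^{1/(1-2s)}$ grows *much slower* than any power $|z|^\epsilon$, so $F_0 \cdot \chi$ is compactly supported and we're in a genuinely different regime from the $\flat_\sigma$ cases. Let me think about the structure.

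The plan is to reduce everything to estimates on power-series coefficients. Since $\chi \in \maclR^\infty$ is compactly supported and radial in each variable, I would first compute $\Pi_A(F_0 \cdot \chi)$ on the monomial basis. Writing $F_0(w) = \sum_\alpha c(F_0,\alpha) e_\alpha(w)$ and using the reproducing formula \eqref{reproducing} together with the radial symmetry of $\chi$, the off-diagonal terms vanish and one gets that $F = \Pi_A(F_0\cdot\chi)$ has coefficients $c(F,\alpha) = \lambda_\alpha\, c(F_0,\alpha)$ for multipliers $\lambda_\alpha = \pi^{-d}\int_{\cc d}|e_\alpha(w)|^2\chi(w)e^{-|w|^2}\,d\lambda(w)$, which factor as $\lambda_\alpha = \prod_j \lambda_{j,\alpha_j}$ by the tensor structure (Lemma \ref{Lemma:SplitWeights}). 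So the whole map is diagonal in the monomial basis.

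The next step is to control these multipliers $\lambda_{j,\alpha_j}$ from above and below. Since $\chi \ge c$ on $D_{t_1}(0)$ and is supported in $\overline{D_{t_2}(0)}$, I expect $\lambda_{j,n}$ to behave like $\tfrac{1}{n!}\int_0^{t^2} \rho^n e^{-\rho}\,d\rho$ up to constants, i.e.\ like a ratio of an incomplete gamma to a full factorial. The crucial fact is that these two-sided bounds are *polynomial* in $n$ (times constants depending on $t_1,t_2$), not exponential — and in the log-type regime \eqref{Eq:MainResult6A}, multiplication of the coefficient bound $\vartheta_{r,s}(\alpha)$ by any factor that is merely polynomial in $|\alpha|$ changes the admissible $r_0$ but leaves the class $\maclA_s$ (or $\maclA_{0,s}$) invariant. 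This is exactly why both parts (1) and (2) can be stated with the *same* form of estimate \eqref{Eq:MainResult6A}/\eqref{Eq:MainResult6B}, only with $r_0$ adjusted. So after translating \eqref{Eq:MainResult6A} into the coefficient bound $|c(F,\alpha)|\lesssim \vartheta_{r_0,s}(\alpha)$ via Proposition \ref{Prop:AnFuncSpaceChar}, I would show that dividing (for part 1) or multiplying (for part 2) by $\lambda_\alpha$ preserves the form of the bound up to a change in $r_0$, then translate back to the function estimate.

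The main obstacle I anticipate is the two-sided estimate on $\lambda_{j,n}$ and, more delicately, checking that the multiplicative distortion is genuinely absorbable into the log-type weight. Because $(\log\langle z\rangle)^{1/(1-2s)}$ is so slowly varying, one must verify that a polynomial-in-$n$ factor on coefficients corresponds, after the correspondence of Proposition \ref{Prop:AnFuncSpaceChar}, to a controlled change $r_0 \mapsto r_0'$ with $r_0' < r$ still achievable — here the constraint $r_0 < r$ must be respected, so the quantitative bookkeeping of how the polynomial factor perturbs the effective $r_0$ is where the care is needed. I would handle the lower bound on $\lambda_{j,n}$ (needed for part 1, the division) by restricting the integral to $D_{t_1}(0)$ where $\chi\ge c$, giving an incomplete-gamma lower bound, and the upper bound (needed for part 2) by extending the integral over all of $\cc d$ and using $\chi\lesssim 1$. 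The remaining work is routine asymptotics of $\int_0^{t^2}\rho^n e^{-\rho}\,d\rho / n!$, which I would not grind through in detail.
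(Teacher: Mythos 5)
Your overall strategy coincides with the paper's: by the radial symmetry of $\chi$, the map $F_0\mapsto \Pi _A(F_0\cdot \chi )$ acts diagonally on monomials, $c(F,\alpha )=\lambda _\alpha c(F_0,\alpha )$, and one transfers the function bound \eqref{Eq:MainResult6A} to coefficient bounds and back (in the paper this diagonalization is Lemma \ref{Lemma:HermiteGroch}, with $\lambda _\alpha = (\varsigma _\alpha \alpha !^{\frac 12})^{-1}$, and the transfer is \eqref{Eq:CoeffF0FComp}). However, your ``crucial fact'' is false: the multipliers $\lambda _{j,n}$ are \emph{not} bounded above and below by polynomials in $n$. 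The asymptotics you declined to carry out give, for fixed $T=t^2$ and large $n$, $\int _0^T\rho ^n e^{-\rho}\, d\rho \asymp T^{n+1}/(n+1)$, hence $\lambda _{j,n}\asymp (\text{polynomial in } n)\cdot t^{2n}/n!$, i.e. factorial decay; this is consistent with the two-sided bound \eqref{Eq:Estvarsigma}. Consequently, in your part (1), passing from $c(F,\alpha )$ to $c(F_0,\alpha )=c(F,\alpha )/\lambda _\alpha$ introduces a factor of size $\alpha !\, t^{-2\alpha}$, not a polynomial one.

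This is not cosmetic, because absorbing a factorial factor into a coefficient bound of the form $e^{-(R_1\alpha _1^{1/(2s)}+\cdots +R_d\alpha _d^{1/(2s)})}$ is exactly where the hypothesis $s<\frac 12$ enters: one needs $\alpha !\, t^{-2\alpha}\lesssim e^{\ep (\alpha _1^{1/(2s)}+\cdots +\alpha _d^{1/(2s)})}$ for every $\ep >0$, which holds precisely because $\alpha \log \alpha = o(\alpha ^{1/(2s)})$ when $1/(2s)>1$. The paper performs this step explicitly in its proof (the inequality $t_1^{-2\alpha}\alpha ! \lesssim e^{(R_{1,1}-R_{2,1})|\alpha _1|^{1/(2s)}+\cdots +(R_{1,d}-R_{2,d})|\alpha _d|^{1/(2s)}}$, justified by $s<\frac 12$). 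An argument that only claims to absorb polynomial distortions makes no use of $s<\frac 12$ and would appear to work for all $s$, which cannot be right: for $1/(2s)\le 1$ the absorption fails. A secondary gap: you invoke Proposition \ref{Prop:AnFuncSpaceChar} for the passage between \eqref{Eq:MainResult6A} and coefficient bounds, but that proposition is qualitative (it characterizes the classes $\maclA _s$, $\maclA _{0,s}$ as unions or intersections over all radii), whereas Proposition \ref{Prop:MainResult6} requires quantitative bookkeeping keeping $r_0<r$; this needs the explicit correspondence $R=s\left ((1-2s)/r\right )^{(1-2s)/(2s)}$ of Lemma \ref{Lemma:MainResult6A}, whose proof is itself a nontrivial Laplace-type estimate and not merely ``routine''.
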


\par

\begin{prop}\label{Prop:MainResult6No2}
Let $\chi \in \maclR ^\infty (\cc d)$ and let $N\ge 0$ be an integer.
Then the following is true:
\begin{enumerate}
\item Suppose $F\in A(\cc d)$ is given by
\begin{equation}\label{Eq:MainResult6ANo2}
F(z) = \sum _{|\alpha |\le N} c(F,\alpha ) z^\alpha ,
\end{equation}
where $\{c(F,\alpha ) \} _{|\alpha |\le N} \subseteq \mathbf C$.
Then there is an
$F_0\in A(\cc d)$ such that $F=\Pi _A(F_0\cdot \chi )$ and
\begin{equation}\label{Eq:MainResult6BNo2}
F_0(z) = \sum _{|\alpha |\le N} c(F_0,\alpha ) z^\alpha ,
\end{equation}
where $\{ c(F_0,\alpha ) \} _{|\alpha |\le N} \subseteq \mathbf C$
and satisfies $c(F_0,\alpha )=0$ when $c(F,\alpha )=0$;

\vrum

\item Suppose $F_0\in A(\cc d)$ satisfies \eqref{Eq:MainResult6BNo2}
for some $\{c(F_0,\alpha ) \} _{|\alpha |\le N} \subseteq \mathbf C$,
and let $F=\Pi _A(F_0\cdot \chi )$. Then $F\in A(\cc d)$ and satisfies
\eqref{Eq:MainResult6ANo2} for some $\{ c(F,\alpha )
\} _{|\alpha |\le N}
\subseteq \mathbf C$
such that $c(F,\alpha )=0$ when $c(F_0,\alpha )=0$.
\end{enumerate}
\end{prop}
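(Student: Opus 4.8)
The plan is to reduce Proposition \ref{Prop:MainResult6No2} to a purely algebraic finite-dimensional statement by exploiting the action of the reproducing kernel $\Pi _A$ on the monomial basis $\{e_\alpha\}$. The essential observation is that, since $\chi \in \maclR ^\infty _{t_1,t_2}(\cc d)$ is radial symmetric in each variable, the operator $F_0 \mapsto \Pi _A(F_0 \cdot \chi)$ diagonalizes with respect to the monomials $z^\alpha$. Concretely, for the monomial $e_\alpha$ one computes
\begin{equation}\label{Eq:Proposal:diag}
\Pi _A(e_\alpha \cdot \chi)(z) = \lambda _\alpha \, e_\alpha (z),
\qquad
\lambda _\alpha = \pi ^{-d}\int _{\cc d} |w^\alpha |^2 \chi (w)\, e^{-|w|^2}\, d\lambda (w)\cdot \frac{1}{\sqrt{\alpha !}}\cdot \sqrt{\alpha !},
\end{equation}
where the radial symmetry forces all off-diagonal contributions $\int w^\beta \overline{w^\alpha} \chi\, e^{-|w|^2}\, d\lambda$ with $\beta \neq \alpha$ to vanish (angular integration in each $w_j$ kills them). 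First I would verify this integrability and vanishing, using that $\chi$ has compact support in $\overline{D_{t_2}(0)}$ so the integrals converge absolutely.

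The key point is then that each eigenvalue $\lambda _\alpha$ is strictly positive. This follows because $\chi \ge 0$ everywhere and $\chi \ge c > 0$ on $D_{t_1}(0)$, so the integrand $|w^\alpha |^2 \chi (w) e^{-|w|^2}$ is non-negative and strictly positive on a set of positive measure; hence $\lambda _\alpha > 0$ for every $\alpha$. Given this, the proposition becomes immediate. For part (1), if $F(z) = \sum _{|\alpha |\le N} c(F,\alpha) z^\alpha$, I simply set
\begin{equation*}
c(F_0,\alpha) = \lambda _\alpha ^{-1} c(F,\alpha),
\end{equation*}
define $F_0 = \sum _{|\alpha|\le N} c(F_0,\alpha) z^\alpha$, and check from \eqref{Eq:Proposal:diag} and linearity of $\Pi _A$ that $\Pi _A(F_0 \cdot \chi) = \sum \lambda _\alpha c(F_0,\alpha) z^\alpha = F$. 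Since $F_0$ is a polynomial it is entire, so $F_0 \in A(\cc d)$, and $c(F_0,\alpha) = 0$ exactly when $c(F,\alpha) = 0$ because $\lambda _\alpha \neq 0$. For part (2), starting from $F_0 = \sum _{|\alpha|\le N} c(F_0,\alpha) z^\alpha$, the same computation gives $F = \Pi _A(F_0\cdot \chi) = \sum _{|\alpha|\le N} \lambda _\alpha c(F_0,\alpha) z^\alpha$, so $F$ is a polynomial of degree at most $N$ with $c(F,\alpha) = \lambda _\alpha c(F_0,\alpha)$, and again the support of the coefficients is preserved.

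I expect the main obstacle to be the careful justification of two analytic points rather than the algebra. First, one must confirm that $\Pi _A(F_0 \cdot \chi)$ is well-defined and given by the integral formula for the polynomial $F_0$: since $F_0\cdot \chi$ is bounded with compact support, the weighted $L^1$ condition \eqref{Eq:FWeightL1Cond} holds trivially, and one may apply the integral representation of $\Pi _A$ together with \eqref{reproducing} term by term over the finitely many monomials. Second, the vanishing of the off-diagonal integrals must be argued cleanly using the product structure of $\chi$ via $\chi (z) = \chi _0(|z_1|,\dots,|z_d|)$ and passing to polar coordinates in each $w_j = \rho _j e^{\im\theta _j}$, where the integral $\int _0^{2\pi} e^{\im(\beta _j - \alpha _j)\theta _j}\, d\theta _j$ vanishes unless $\beta _j = \alpha _j$. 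Once these are in place, the positivity of $\lambda _\alpha$ and the explicit inversion complete the proof with no further difficulty.
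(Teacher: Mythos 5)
Your proposal is correct and takes essentially the same approach as the paper: the diagonal action of $F_0\mapsto \Pi _A(F_0\cdot \chi )$ on monomials that you derive via polar coordinates and angular integration is precisely the content of Lemma \ref{Lemma:HermiteGroch} (which gives $\Pi _A(\varsigma _\alpha z^\alpha \cdot \chi )=e_\alpha$ with $\varsigma _\alpha >0$ finite), and the paper deduces the proposition directly from the resulting coefficient relation \eqref{Eq:CoeffF0FComp}, exactly as you do by inverting the strictly positive eigenvalues. The only blemish is your displayed formula for $\lambda _\alpha$, where the cancelling pair $\alpha !^{-1/2}\cdot \alpha !^{1/2}$ should instead be the factor $1/\alpha !$ coming from the Taylor expansion of $e^{(z,w)}$; this is harmless since the argument uses only $\lambda _\alpha >0$.
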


\par

\subsection{Preparing results and their proofs}
For the proofs of Propositions \ref{Prop:MainResult1}--\ref{Prop:MainResult6}
and thereby of Theorems \ref{Thm:MainResult1}--\ref{Thm:MainResult6}
we need some preparatory results. Because the proof of
Proposition \ref{Prop:MainResult1} needs some room, we put parts
of the statement in the following separate proposition. At the same time we slightly
refine some parts concerning the image of compactly supported elements
in $L^\infty$ under the map $\Pi _A$.

\par

\begin{prop}\label{Prop:InclUltraDist}
Let $s>1$, $r_0,r\in \rr d_+$ be such that $r_0<r$ and suppose
%
%
%
%
%
%
that either
$$
F_0\in \maclE _s'(D_{r_0}(0)),
\quad
F_0\in \maclE _{0,s}'(D_{r_0}(0)),
\quad
F_0\in \mascE '(D_{r_0}(0))
\quad \text{or}\quad
F_0\in L^\infty (\overline {D_{r}(0)}).
$$
Then $F=\Pi _AF_0 \in A(\cc d)$ and satisfies
$$
|F(z)| \lesssim e^{r_1|z_1|+\cdots +r_d|z_d|}.
$$
\end{prop}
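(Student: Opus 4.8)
The plan is to estimate $F=\Pi_A F_0$ directly from the reproducing-kernel integral. Recall that for $F_0$ in any of the four listed spaces, $F_0$ is supported in $\overline{D_{r_0}(0)}$ (or $\overline{D_r(0)}$ in the $L^\infty$ case), and that
$$
(\Pi_A F_0)(z) = \pi^{-d}\scal{F_0}{e^{(z,\cdo)-|\cdo|^2}},
$$
where $e^{(z,\cdo)-|\cdo|^2}$ is, for each fixed $z$, a function that is analytic and rapidly decaying in the $w$-variable. First I would check that $F=\Pi_A F_0$ is entire: since $z\mapsto e^{(z,w)-|w|^2}$ is holomorphic in $z$ with all $z$-derivatives locally uniformly controlled on the compact $w$-support, analyticity of $F$ follows by differentiation under the duality bracket (or under the integral sign in the $L^\infty$ case). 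The real work is the growth bound $|F(z)|\lesssim e^{r_1|z_1|+\cdots+r_d|z_d|}$.

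For the $L^\infty$ case the estimate is the most transparent and I would treat it first, since it sets the scale. Here one bounds
$$
|F(z)| \le \pi^{-d}\nm{F_0}{L^\infty}\int_{\overline{D_r(0)}} |e^{(z,w)-|w|^2}|\, d\lambda(w)
\le \pi^{-d}\nm{F_0}{L^\infty}\int_{\overline{D_r(0)}} e^{\operatorname{Re}(z,w)}\, d\lambda(w),
$$
and since $|w_j|<r_j$ on the polydisc, $\operatorname{Re}(z,w)\le \sum_j |z_j|\,|w_j| \le \sum_j r_j|z_j|$, giving $|F(z)|\lesssim e^{r_1|z_1|+\cdots+r_d|z_d|}$ with the volume of the polydisc absorbed into the implicit constant. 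This is exactly why the radii $r_j$ (rather than $r_{0,j}$) appear in the conclusion.

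For the distributional cases ($\maclE_s'$, $\maclE_{0,s}'$ with $s>1$, and $\mascE'$) the support is inside the strictly smaller polydisc $\overline{D_{r_0}(0)}$. I would apply $F_0$ to the test function $w\mapsto e^{(z,w)-|w|^2}$ via the seminorm estimates defining the relevant distribution space: for $F_0\in\mascE'(D_{r_0}(0))$ one gets a bound by a finite number of sup-norms of $w$-derivatives of $e^{(z,w)-|w|^2}$ over $\overline{D_{r_0}(0)}$, while for the Gevrey classes $\maclE_s'$ one must control the whole Gevrey-type sequence of derivatives. Differentiating $e^{(z,w)-|w|^2}$ in $w$ brings down factors that are polynomial in $z$ times the exponential, and on $\overline{D_{r_0}(0)}$ the exponential is bounded by $e^{\sum_j r_{0,j}|z_j|}$; since $r_0<r$, any polynomial prefactor $\eabs{z}^M$ (or the subexponential Gevrey loss, controlled because $s>1$ matches the entirety of the kernel) is dominated by the slack $e^{\sum_j(r_j-r_{0,j})|z_j|}$, so the total remains $\lesssim e^{r_1|z_1|+\cdots+r_d|z_d|}$.

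The main obstacle I expect is the Gevrey case: one must verify that $w\mapsto e^{(z,w)-|w|^2}$ is a legitimate test function for $\maclE_s'$ (i.e.\ smooth with $s$-Gevrey bounds on the support) and track how the $z$-dependence of its Gevrey seminorms grows. The key point making this work is $s>1$, so that the Gevrey growth of the derivatives of the Gaussian-type kernel is summable against the duality pairing, and the resulting $z$-dependent factor is at most subexponential in each $|z_j|$ and hence absorbed by the gap $r-r_0>0$. I would organize the argument so that all four cases reduce to the single estimate ``polynomial-or-subexponential-in-$z$ times $e^{\sum_j r_{0,j}|z_j|}$, absorbed into $e^{\sum_j r_j|z_j|}$,'' which yields the stated bound uniformly.
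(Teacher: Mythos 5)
Your proposal is correct and follows essentially the same route as the paper: both estimate $\Pi_A F_0$ by pairing $F_0$ with the kernel $w\mapsto e^{(z,w)-|w|^2}$, control the $w$-derivatives (which produce monomial factors in $z$ times the exponential bounded by $e^{\sum_j r_{0,j}|z_j|}$ on the support) through the Gevrey/distributional seminorms, and use $s>1$ to make the resulting loss of order $e^{c\sum_j|z_j|^{1/s}}$ sublinear in the exponent so it is absorbed by the gap $r_0<r$, with the $L^\infty$ case handled by the identical direct integral estimate. The only cosmetic difference is organizational: the paper first reduces the four cases to two via the embeddings $\mascE '\hookrightarrow \maclE _{0,s+\ep}'\hookrightarrow \maclE _s'\hookrightarrow \maclE _{0,s}'$, whereas you treat $\mascE '$ separately by its finite-order estimate.
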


\par

\begin{proof}
By the inclusions
$$
\mascE \hookrightarrow
\maclE _{0,s+\ep}' \hookrightarrow \maclE _s'
\hookrightarrow \maclE _{0,s}'
$$
when $\ep >0$, it suffices to consider
the case when $F_0\in \maclE _s'(D_{r_0}(0))$ or
$F_0\in L^\infty (\overline {D_{r}(0)})$ hold.

\par

Let $r_2=r$. First suppose that $F_0\in \maclE _s'(D_{r_0}(0))$ holds,
choose $r_1\in \rr d_+$ such that
$r_0<r_1<r_2$, $\Psi (y,\eta )=e^{-(|y|^2+|\eta |^2)}$ and let
$\Phi _z(y,\eta )=e^{(z,y+i\eta )}$.
By identifying $\cc d$ with $\rr {2d}$ and using the fact that
$F_0\in \maclE _s'(D_{r_0}(0))$ we obtain
\begin{equation}\label{Eq:CompRepKernel}
|\Pi _AF_0(z)| = \pi ^{-d}|\scal {F_0}{\Phi _z\Psi}|
\lesssim
\sup _{\alpha \in \nn {2d}}
\left (
\frac {\nm {D^\alpha (\Phi _z\Psi )}{L^\infty (D_{r_1}(0))}}
{h_1^{|\alpha |}\alpha !^s}
\right )
\end{equation}
for every $h_1>0$. We also have $\Psi \in \maclE _{1/2}(\rr {2d})
\hookrightarrow \maclE _{0,s}(\rr {2d})$, which implies
$$
\nm {D^\alpha \Psi}{L^\infty (D_{r_1}(0))}
\lesssim
h_2^{|\alpha |}\alpha !^s
$$
for every $h_2>0$. Furthermore,
$$
|D^\alpha \Phi _z(y,\eta )| = |m_\alpha (z) e^{(z,y+i\eta )}|
\le
|m_\alpha (z) |e^{r_{1,1}|z_1|+\cdots +r_{1,d}|z_d|},\quad
y+i\eta \in D_{r_1}(0),
$$
where
$$
m_\alpha (z)= \prod _{j=1}^dz_j^{\alpha _j+\alpha _{d+j}},
\qquad z\in \cc d,\ \alpha \in \nn {2d}.
$$

\par

By choosing $h_1=4$ and $h_2=1$ above, and letting $y+i\eta \in D_{r_1}(0)$,
Leibnitz rule gives
\begin{multline}\label{Eq:FactorialComp}
4^{-|\alpha |}\alpha !^{-s}
|D^\alpha (\Phi _z\Psi)(y,\eta )|
\\[1ex]
\le
4^{-|\alpha |}\alpha !^{-s}\sum _{\gamma \le \alpha}
{\alpha \choose \gamma}|D^\gamma \Phi _z(y,\eta )|
\, |D^{\alpha -\gamma}\Psi (y,\eta)|
\\[1ex]
\lesssim 4^{-|\alpha |}\alpha !^{-s}
\left (
\sum _{\gamma \le \alpha}
{\alpha \choose \gamma}
|m_\gamma (z) |(\alpha -\gamma )!^s
\right )
e^{r_{1,1}|z_1|+\cdots +r_{1,d}|z_d|}
\\[1ex]
\lesssim 4^{-|\alpha |}
\left (
\sum _{\gamma \le \alpha}
{\alpha \choose \gamma}
|m_\gamma (z) |\gamma !^{-s}
\right )
e^{r_{1,1}|z_1|+\cdots +r_{1,d}|z_d|}
\\[1ex]
\le
\sup _{\gamma \le \alpha}
\left (
\prod _{j=1}^d\frac {|z_j|^{\gamma _j}}{\gamma _j!^s}
\cdot
\frac {|z_j|^{\gamma _{d+j}}}{\gamma _{d+j}!^s}
\right )
e^{r_{1,1}|z_1|+\cdots +r_{1,d}|z_d|}.
\end{multline}
In the last inequality we have used that the number of terms in the
sums are bounded by $2^{|\alpha|}$, and that ${n \choose k}\le 2^k$
when $n,k$ are non-negative integers such that $k\le n$.

\par

By combining \eqref{Eq:FactorialComp} with the estimate
$$
\frac {|z_j|^{\gamma _j}}{\gamma _j!^s}
=
\left (
\frac {(|z_j|^{\frac 1s})^{\gamma _j}}{\gamma _j!}
\right )^s
\le
e^{s|z_j|^{\frac 1s}},
$$
we get
\begin{multline*}
\sup _{\alpha \in \nn {2d}}
\left (
\frac {\nm {D^\alpha (\Phi _z\Psi)}{L^\infty (D_{r_1}(0))}}
{h_1^{|\alpha |}\alpha !^s}
\right )
\lesssim
e^{2s(|z_1|^{\frac 1s}+\cdots +|z_d|^{\frac 1s})}
e^{r_{1,1}|z_1|+\cdots +r_{1,d}|z_d|}
\\[1ex]
\lesssim
e^{r_{2,1}|z_1|+\cdots +r_{2,d}|z_d|}.
\end{multline*}
In the last inequality we have used the fact that $r_1<r_2$ and $s>1$.
From the latter estimate and \eqref{Eq:CompRepKernel} we obtain
$$
|\Pi _AF_0(z)|\lesssim e^{r_{2,1}|z_1|+\cdots +r_{2,d}|z_d|},
$$
and the result follows when $F_0\in \maclE _s'(D_{r_0}(0))$.

\par

Suppose instead that $F_0\in L^\infty (Q)$ holds, where
$Q=\overline {D_{r_2}(0)}\subseteq \cc d$, and let
$Q_j=\overline {D_{r_{2,j}}(0)}\subseteq \mathbf C$. Then
\begin{multline*}
|\Pi _AF_0(z)| \lesssim \int _{Q} |F_0(w)||e^{(z,w)}|\, d\lambda (w)
\\[1ex]
\le
\nm {F_0}{L^\infty}\prod _{j=1}^d
\left (
\int _{Q_j} e^{|z_j||w_j|}\, d\lambda (w_j)
\right )
\lesssim
e^{r_{2,1}|z_1|+\cdots +r_{2,d}|z_d|},
\end{multline*}
and the result follows in this case as well.
\end{proof}

\par

In the next lemma we give options
on compactly supported functions which are mapped on the basic monomials,
$e_\alpha$ by the operator $\Pi _A$.

\par

\begin{lemma}\label{Lemma:HermiteGroch}
Let $t_1,t_2\in \rr d_+$ be such that $t_1\le t_2$, $\chi \in
\mathcal R^\infty _{t_1,t_2}(\cc d)$, and let $\chi _0$ be such
that $\chi _0(|z_1|,\dots ,|z_d|)=\chi (z_1,\dots ,z_d)$. If
$$
F_{\alpha ,\chi}(z) = \varsigma _\alpha z^\alpha \chi (z),
$$
with
\begin{equation}\label{Eq:varsigmaDef}
\varsigma _\alpha = 2^{-d}\alpha !^{\frac 12}
\left (
\int _{\Delta _{t_2}}
\chi _0(u)e^{-|u|^2}u^{2\alpha}u_1\cdots u_d\, du
\right )^{-1},\quad \Delta _t = \sets {u\in \rr d_+}{u\le t},
\end{equation}
then the following is true:
\begin{enumerate}
\item $\Pi _AF_{\alpha ,\chi }=e_\alpha$;

\vrum

\item for some constant $C>0$ which only depends on $\nm \chi{L^\infty}$,
$c$ in Definition \ref{Def:RClass} and the dimension $d$, it holds
\begin{equation}\label{Eq:Estvarsigma}
C^{-1}
\left (
\prod _{j=1}^d
t_{2,j}^{-2}(\alpha _j+1)
\right ) t_2^{-2\alpha} \alpha !^{\frac 12}
\le
\varsigma _\alpha
\le
C
e^{|t_1|^2}
\left (
\prod _{j=1}^d
t_{1,j}^{-2}(\alpha _j+1)
\right ) t_1^{-2\alpha}\alpha !^{\frac 12}
\end{equation}
\end{enumerate}
\end{lemma}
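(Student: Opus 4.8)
The plan is to establish (1) by evaluating the reproducing integral directly, exploiting the angular symmetry of $\chi$ to collapse the kernel expansion to a single diagonal term, and then to establish (2) by elementary two-sided estimates of the normalising integral
$$
I_\alpha = \int _{\Delta _{t_2}} \chi _0(u) e^{-|u|^2} u^{2\alpha} u_1\cdots u_d \, du ,
$$
so that throughout $\varsigma _\alpha = 2^{-d}\alpha !^{\frac 12} I_\alpha ^{-1}$.

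For part (1), I would first observe that $F_{\alpha ,\chi}$ is bounded and compactly supported, so the integrability hypothesis \eqref{Eq:FWeightL1Cond} for $\Pi _A$ is trivially satisfied and, by the reproducing formula \eqref{reproducing} with $d\mu (w)=\pi ^{-d}e^{-|w|^2}\, d\lambda (w)$,
$$
(\Pi _AF_{\alpha ,\chi})(z) = \varsigma _\alpha \pi ^{-d}\int _{\cc d} w^\alpha \chi (w)\, e^{(z,w)-|w|^2}\, d\lambda (w) .
$$
Passing to polar coordinates $w_j=\rho _je^{i\theta _j}$ in each variable, inserting $\chi (w)=\chi _0(\rho _1,\dots ,\rho _d)$ and $e^{-|w|^2}=e^{-|\rho |^2}$, and expanding $e^{(z,w)}=\prod _j\sum _{k_j}(z_j\overline{w_j})^{k_j}/k_j!$, I would integrate in $\theta _j$ first. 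Since $\int _0^{2\pi}e^{i(\alpha _j-k_j)\theta _j}\, d\theta _j$ vanishes unless $k_j=\alpha _j$, only the diagonal term survives (term-by-term integration is legitimate because everything is bounded on the compact support and the series converges absolutely there). Collecting the surviving factors, together with $\pi ^{-d}(2\pi )^d=2^d$ and the radial measure $\rho _1\cdots \rho _d$, leaves exactly $2^d(z^\alpha /\alpha !)\, I_\alpha$; by the very definition of $\varsigma _\alpha$ this equals $z^\alpha /\alpha !^{\frac 12}=e_\alpha (z)$.

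For part (2), I would bound $I_\alpha$ from both sides, the integrand being non-negative. For the lower bound, restricting the domain to $\Delta _{t_1}$ and using $\chi _0\ge c$ there together with $e^{-|u|^2}\ge e^{-|t_1|^2}$, the remaining integral of $\prod _ju_j^{2\alpha _j+1}$ over the box $\Delta _{t_1}$ factorises into $\prod _jt_{1,j}^{2\alpha _j+2}/(2\alpha _j+2)$. For the upper bound I would use $\chi _0\le \nm \chi {L^\infty}$ and $e^{-|u|^2}\le 1$ on $\Delta _{t_2}$ and factorise the resulting integral the same way. Inverting these two estimates and substituting into $\varsigma _\alpha=2^{-d}\alpha !^{\frac 12}I_\alpha ^{-1}$ then yields \eqref{Eq:Estvarsigma}: the $2^{\pm d}$ cancel, the lower estimate for $I_\alpha$ produces the factor $e^{|t_1|^2}$ and the $t_1$-powers in the upper bound for $\varsigma _\alpha$, and the upper estimate for $I_\alpha$ produces the $t_2$-powers in the lower bound, with $C$ depending only on $c$, $\nm \chi {L^\infty}$ and $d$.

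The computation is essentially routine; the one step deserving care is the angular integration in (1), where the radial symmetry of $\chi$ is precisely what reduces the kernel expansion to its diagonal and thereby matches the normalisation built into $\varsigma _\alpha$. Beyond that, the only work is the bookkeeping of the powers $t_{i,j}^{-2(\alpha _j+1)}$ and the factor $e^{|t_1|^2}$ when passing from $I_\alpha$ to $\varsigma _\alpha$, which is mechanical.
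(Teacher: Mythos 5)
Your proposal is correct and follows essentially the same route as the paper's proof: polar coordinates in each complex variable, Taylor expansion of $e^{(z,w)}$ with the angular integrals killing all off-diagonal terms for part (1), and two-sided elementary bounds on the normalising integral $I_\alpha$ (restriction to $\Delta_{t_1}$ with $\chi_0\ge c$, $e^{-|u|^2}\ge e^{-|t_1|^2}$ for one side; $\chi_0\le \Vert\chi\Vert_{L^\infty}$, $e^{-|u|^2}\le 1$ on $\Delta_{t_2}$ for the other) for part (2). Your explicit justifications of the reproducing formula's applicability and of term-by-term integration are left implicit in the paper but are welcome additions.
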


\par

\begin{proof}
By using polar coordinates in each complex variable when integrating we get
\begin{multline}\label{Eq:PiFalpha}
(\Pi _AF_{\alpha ,\chi})(z) = \pi ^{-d}\varsigma _\alpha
\int _{\cc d}w^\alpha \chi (w) e^{(z,w)-|w|^2}\, d\lambda (w)
\\[1ex]
=
\pi ^{-d}\varsigma _\alpha
\int _{\Delta _{t_2}} I _\alpha (u,z)e^{-|u|^2}u^\alpha
\chi _0(u)u_1\cdots u_d\, du,
\end{multline}
where
\begin{align}
I _\alpha (u,z)
&=
\int _{[0,2\pi )^d}e^{i\scal \alpha \theta}
\left (
\prod _{j=1}^d
e^{z_ju_je^{-i\theta _j}}
\right )
\, d\theta
=
\prod _{j=1}^d I_{\alpha _j}(u_j,z_j)
\label{Eq:IalphaDef}
\intertext{with}
I_{\alpha _j}(u_j,z_j)
&=
\int _0^{2\pi}e^{i\alpha _j\theta _j}e^{z_ju_je^{-i\theta _j}}\, d\theta _j.
\notag
\end{align}
By Taylor expansions we get
\begin{multline*}
I_{\alpha _j}(u_j,z_j)
=
\int _0^{2\pi}e^{i\alpha _j\theta _j}
\left (
\sum _{k=0}^\infty
\frac {z_j^ku_j^ke^{-ik\theta _j}}{k!}
\right )
\, d\theta _j
\\[1ex]
=
\sum _{k=0}^\infty
\left (
\left (
\int _0^{2\pi}e^{i(\alpha _j-k)\theta _j}
\, d\theta _j 
\right )
\frac {z_j^ku_j^k}{k!}
\right )
=
\frac {2\pi z_j^{\alpha _j}u_j^{\alpha _j}}{\alpha _j!}
\end{multline*}

\par

By inserting this into \eqref{Eq:PiFalpha} and \eqref{Eq:IalphaDef} we get
\begin{multline*}
(\Pi _AF_{\alpha ,\chi})(z) = \pi ^{-d}\varsigma _\alpha
\int _{\Delta _{t_2}} 
(2\pi )^d\frac {u^\alpha z^\alpha}{\alpha !}
e^{-|u|^2}u^\alpha \chi _0(u)u_1\cdots u_d\, du
\\[1ex]
=
\left (
2^d \varsigma _\alpha 
\alpha !^{-\frac 12}
\int _{\Delta _{t_2}} 
e^{-|u|^2}u^{2\alpha} \chi _0(u)u_1\cdots u_d\, du
\right )
e_\alpha (z)
= e_\alpha (z)
\end{multline*}
and (1) follows.

\par

Since $\chi$ is non-negative and fullfils $\chi \ge c$ on $D_{t_1}(0)$ we get
\begin{multline*}
\varsigma _\alpha
\lesssim
e^{|t_1|^2}\alpha !^{\frac 12}
\left (
\int _{\Delta _{t_1}}u^{2\alpha}u_1\cdots u_2\, du
\right )^{-1}
\\[1ex]
=
e^{|t_1|^2}\alpha !^{\frac 12}
\prod _{j=1}^d
\left (
\frac {t_{1,j}^{2\alpha _j+2}}{2\alpha _j+2}
\right )^{-1}
\asymp
e^{|t_1|^2}\alpha !^{\frac 12}
\left (
\prod _{j=1}^d (t_{1,j}^{-2}(\alpha _j+1)),
\right )
t_1^{-2\alpha}
\end{multline*}
which gives the right inequality in \eqref{Eq:Estvarsigma}. By the
support properties of $\chi$ we also have
$$
\varsigma _\alpha
\gtrsim
\alpha !^{\frac 12}
\left (
\int _{\Delta _{t_2}}u^{2\alpha}u_1\cdots u_2\, du
\right )^{-1},
$$
and the left inequality in \eqref{Eq:Estvarsigma} follows by similar arguments, and
(2) follows.
\end{proof}

\par

The next lemma shows that we may estimate entire functions by
different Lebesgue norms. We omit the proof, since the result
follows from \cite[Theorem 3.2]{To11}.

\par

\begin{lemma}\label{Lemma:EstimateL2LInfty}
Suppose $s,\tau \in \mathbf R$ and $r,r_0\in \rr d_+$ are
such that
$$
s< \frac 12,\quad \tau >-\frac 12
\quad \text{and}\quad
r_0<r.
$$
Let $p,q\in [1,\infty]$, $F\in A(\cc d)$ and set
\begin{align*}
M_{1,r}(z)
&=
r_1|z_1|^{\frac {2}{2\tau +1}}+\cdots
+r_d|z_d|^{\frac {2}{2\tau +1}}
\intertext{and}
M_{2,r}(z)
&=
r_1(\log \eabs {z_1})^{\frac 1{1-2s}}
+\cdots +
r_d(\log \eabs {z_d})^{\frac 1{1-2s}}.
\end{align*}
Then
\begin{align*}
    \nm{F \cdot e^{-M_{j,r} }}{L^p(\cc{d} )} 
    &\lesssim 
    \nm {F \cdot e^{-M_{j,r_0} }}{L^q(\cc{d} )} .
    \end{align*}
\end{lemma}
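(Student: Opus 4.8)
The plan is to combine the sub-mean value property of entire functions with the room provided by the strict gap $r_0<r$. Since $F$ is entire, $\log|F|$ is plurisubharmonic, hence so is $|F|^q$ for every $q>0$, and we may use the polydisc sub-mean value inequality. Throughout write $a=\frac 2{2\tau +1}$ and $b=\frac 1{1-2s}$ for the two exponents occurring in $M_{1,r}$ and $M_{2,r}$; the hypotheses $\tau>-\frac 12$ and $s<\frac 12$ guarantee $a>0$ and $b>0$. Both weights split as sums over the complex coordinates, so all weight comparisons below reduce to one variable at a time and are reassembled by Fubini. Denote by $P_\rho(z)=\prod_{k=1}^d\{w_k:|w_k-z_k|<\rho\}$ the polydisc of radius $\rho$.

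The heart of the matter, which I would prove first, is the local comparison: there exist $\rho>0$ and $C>0$ such that
\[
M_{j,r_0}(w)\le M_{j,r}(z)+C\quad\text{whenever } |w_k-z_k|\le \rho \text{ for all } k.
\]
By the coordinatewise splitting it suffices to bound $r_{0,k}|w_k|^{a}-r_k|z_k|^{a}$ (resp. $r_{0,k}(\log\eabs{w_k})^{b}-r_k(\log\eabs{z_k})^{b}$) from above when $|w_k-z_k|\le\rho$. Using $|w_k|\le|z_k|+\rho$ and the strict inequality $r_{0,k}<r_k$, the coefficient of the leading power is negative, so each one-variable expression tends to $-\infty$ as $|z_k|\to\infty$ and is therefore bounded above on all of $\mathbf C$. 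I expect this to be the main obstacle: in the range $-\frac 12<\tau<\frac 12$ one has $a>1$, so the oscillation $r_{0,k}\bigl((|z_k|+\rho)^a-|z_k|^a\bigr)\sim a\,r_{0,k}\rho|z_k|^{a-1}$ itself grows, and one must check that it is dominated by the gain $(r_k-r_{0,k})|z_k|^a$; this is precisely where the strictness $r_0<r$ is used.

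Granting the local comparison, the upgrade from $L^q$ to $L^\infty$ is immediate. For $q<\infty$ the polydisc sub-mean value inequality applied to $|F|^q$, followed by the pointwise bound $e^{-qM_{j,r}(z)}\lesssim e^{-qM_{j,r_0}(w)}$ for $w\in P_\rho(z)$, gives
\[
|F(z)|^qe^{-qM_{j,r}(z)}\lesssim \int _{P_\rho (z)}|F(w)|^qe^{-qM_{j,r_0}(w)}\, d\lambda (w)\le \nm{Fe^{-M_{j,r_0}}}{L^q(\cc d)}^q .
\]
Taking the $q$th root and the supremum over $z$ yields $\nm{Fe^{-M_{j,r}}}{L^\infty(\cc d)}\lesssim\nm{Fe^{-M_{j,r_0}}}{L^q(\cc d)}$, the case $q=\infty$ being trivial since $M_{j,r}\ge M_{j,r_0}$.

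Finally I would reach a general target exponent $p<\infty$ by choosing an intermediate radius $r_0<r'<r$. When $p\ge q$ one has the identity
\[
|F|^pe^{-pM_{j,r}}=\bigl(|F|e^{-M_{j,r_0}}\bigr)^q\bigl(|F|e^{-M_{j,r'}}\bigr)^{p-q}e^{E},\qquad E=-pM_{j,r}+qM_{j,r_0}+(p-q)M_{j,r'},
\]
and since $q r_{0,k}+(p-q)r'_k<p r'_k<p r_k$ for each $k$ one gets $E\le 0$; bounding the middle factor by $\nm{Fe^{-M_{j,r'}}}{L^\infty}^{p-q}$, integrating, and invoking the $L^q\to L^\infty$ step with radius $r'$ gives the claim with no further hypothesis. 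When $p<q$ (or $q=\infty$) I would instead apply H\"older to $\int|F|^pe^{-pM_{j,r}}=\int\bigl(|F|^pe^{-pM_{j,r_0}}\bigr)e^{-p(M_{j,r}-M_{j,r_0})}\,d\lambda$, reducing matters to the convergence of $\int_{\cc d}e^{-c(M_{j,r}-M_{j,r_0})}\,d\lambda$ for $c>0$; this holds for $M_{1,r}$ for every admissible $\tau$ and for $M_{2,r}$ once $b>1$, i.e. in the range $0<s<\frac 12$ in which the lemma is applied. Assembling these cases is exactly the content of \cite[Theorem 3.2]{To11}.
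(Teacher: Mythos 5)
Your argument is correct, and it is a genuinely different route from the paper's: the paper offers no proof at all for this lemma, deferring entirely to \cite[Theorem 3.2]{To11}, whereas you give a self-contained elementary proof. Your three ingredients --- plurisubharmonicity of $|F|^q$ together with the polydisc sub-mean value inequality, the local weight comparison $M_{j,r_0}(w)\le M_{j,r}(z)+C$ for $w\in P_\rho (z)$ (where the strict gap $r_0<r$ absorbs the polydisc oscillation of the weight, exactly as you note in the regime $a>1$), and the splitting into the cases $p\ge q$ (via the identity with $E\le 0$ and an intermediate radius $r'$) and $p<q$ (via H{\"o}lder and integrability of $e^{-c(M_{j,r}-M_{j,r_0})}$) --- assemble into a complete and correct proof of every instance of the lemma that is actually true and actually used in the paper. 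What the citation buys the paper is brevity; what your proof buys is transparency about where each hypothesis enters, plus independence from the precise formulation of Toft's theorem.

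One point deserves emphasis: your closing caveat is not a weakness of your proof but a genuine defect of the lemma as stated. For $j=2$ the statement allows any real $s<\frac 12$, but it is false when $\frac 1{1-2s}\le 1$ (i.e. $s\le 0$) and $p<q$. Indeed, take $d=1$, $s=0$, $F\equiv 1$, $p=1$, $q=\infty$, $r_0=1$, $r=2$: then $\nm {F\cdot e^{-M_{2,r}}}{L^1(\mathbf C)}=\int _{\mathbf C}\eabs z ^{-2}\, d\lambda (z)=\infty$ while $\nm {F\cdot e^{-M_{2,r_0}}}{L^\infty (\mathbf C)}=1$. So the restriction to $0<s<\frac 12$ that you impose in the H{\"o}lder step is unavoidable; it is also precisely the range in which the paper invokes the lemma for $j=2$ (in the proof of Lemma \ref{Lemma:MainResult6A}), while all its $j=1$ invocations are covered by your argument for every $\tau >-\frac 12$. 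Note also that your cases $p\ge q$ and $p=\infty$ remain valid for $j=2$ even when $s\le 0$, since they use only the sub-mean value step and $E\le 0$, not the integrability of the difference weight.
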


\par

The next lemma relates Lebesgue estimates of entire functions with
estimates on corresponding Taylor coefficients. 
Here we let the Gamma function on $\rr d_+$ be defined by
$$
\Gamma _{\! d}(x_1,\dots ,x_d) = \prod _{j=1}^d\Gamma (x_j),
$$
where $\Gamma$ is the Gamma function on $\mathbf R_+$.

\par

\begin{lemma}\label{Lemma:EstFtimesEfunction}
Let $\tau > -\frac 12$, $M_{1,r}$ be the same as in Lemma \ref{Lemma:EstimateL2LInfty},
$\omega (z)= e^{\frac 12|z|^2-M_{1,r}(z)}$, $j=1,2$,
$z\in \cc d$, and let $\alpha _0 = (1,\dots ,1)\in \nn d$. 
Also let
\begin{align*}
\vartheta _{r}(\alpha ) 
&=
\left(
(2\tau +1)
(2r)^{-(2\tau +1)(\alpha +\alpha _0)}
\left (
\frac {\Gamma
\left (
(2\tau +1)(\alpha +\alpha _0)
\right )}{\alpha !}
\right )
\right )
^{\frac 12} .
\end{align*}
If $F \in A (\cc d)$ is given by \eqref{Eq:PowerSeriesExp},
then
$$
\nm{F \cdot e^{-M_{1,r}}}{L^2(\cc{d})} 
=
\pi ^{\frac d2}\left (
\sum_{\alpha \in \nn d}
|c(F,\alpha )\vartheta _{r}(\alpha )|^2
\right )^{\frac 12},
$$
and $\maclA _{[\vartheta _{r}]}^2(\cc d) = A^2_{(\omega )}(\cc d)$
with equality in norms.
\end{lemma}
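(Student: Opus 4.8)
The plan is to compute the $L^2$-norm $\nm{F \cdot e^{-M_{1,r}}}{L^2(\cc d)}$ directly by inserting the power series expansion \eqref{Eq:PowerSeriesExp} and integrating in polar coordinates in each complex variable, using the fact that the weight $\omega _0$ splits as a product (so by Lemma \ref{Lemma:SplitWeights} the problem reduces to the one-dimensional case). Once the explicit weight $\vartheta _r$ has been identified from this computation, the second assertion $\maclA _{[\vartheta _r]}^2(\cc d) = A^2_{(\omega )}(\cc d)$ with equal norms will follow from Theorem \ref{cA2sCharExt}, provided I verify its hypothesis \eqref{varthetaCond}.

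First I would write $M_{1,r}(z) = r_1|z_1|^{2/(2\tau +1)}+\cdots +r_d|z_d|^{2/(2\tau +1)}$ and note that $e^{-2M_{1,r}(z)}$ factors over the coordinates. Since $\{e_\alpha\}$ is an orthonormal system, the cross terms vanish after the angular integration: for each $j$, integrating $z_j^{\alpha _j}\overline{z_j^{\beta _j}}$ over the argument of $z_j$ against the radially symmetric weight kills all terms with $\alpha _j \neq \beta _j$. Hence
\begin{equation*}
\nm{F \cdot e^{-M_{1,r}}}{L^2(\cc d)}^2
=
\sum _{\alpha \in \nn d}
|c(F,\alpha )|^2
\int _{\cc d}
\frac{|z^{\alpha}|^2}{\alpha !}
e^{-2M_{1,r}(z)}\, d\lambda (z).
\end{equation*}
The integral factors into a product of one-dimensional integrals, each of which, after passing to polar coordinates $z_j = \rho _j e^{i\theta _j}$ and substituting $t_j = 2 r_j \rho _j^{2/(2\tau +1)}$, reduces to a Gamma integral. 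The substitution turns $\int _0^\infty \rho _j^{2\alpha _j +1} e^{-2 r_j \rho _j^{2/(2\tau +1)}}\, d\rho _j$ into a constant multiple of $\Gamma\!\left((2\tau +1)(\alpha _j+1)\right)(2r_j)^{-(2\tau +1)(\alpha _j+1)}$, which is exactly what appears in $\vartheta _r(\alpha )^2$ once the factor $(2\tau +1)$, the multi-index notation $\alpha _0 = (1,\dots ,1)$, and the $1/\alpha !$ are assembled. Collecting the factor $2\pi$ from each angular integration yields the stated $\pi ^{d/2}$ prefactor.

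The main obstacle I anticipate is bookkeeping the substitution and the combinatorial constants cleanly so that the one-dimensional Gamma integral matches the precise form of $\vartheta _r$ stated in the lemma, including the exponent $(2\tau +1)(\alpha +\alpha _0)$ and the shift by $\alpha _0$; this is where the condition $\tau > -\tfrac 12$ is used, guaranteeing convergence of the Gamma integral (the exponent $2/(2\tau +1)$ is positive). Finally, to invoke Theorem \ref{cA2sCharExt} I must check \eqref{varthetaCond}, i.e. that $r^{|\alpha |}/(\alpha !)^{1/2}\lesssim \vartheta _r(\alpha )$ for every $r>0$; this is a routine consequence of Stirling's formula, since $\vartheta _r(\alpha )$ grows like $(\alpha !)^{\tau}$ up to the controlled exponential and power factors, which dominates any fixed geometric-over-factorial ratio. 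With that verified, Theorem \ref{cA2sCharExt} gives $\maclA _{[\vartheta _r]}^2(\cc d) = A^2_{(\omega )}(\cc d)$ with equality of norms, completing the proof.
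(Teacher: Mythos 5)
Your proposal is correct and follows essentially the same route as the paper: reduction to $d=1$ via Lemma \ref{Lemma:SplitWeights}, the substitution $u=2rt^{1/(2\tau+1)}$ turning the radial integral into the stated Gamma expression, verification of \eqref{varthetaCond} by Stirling's formula, and an appeal to Theorem \ref{cA2sCharExt} for the identification $\maclA _{[\vartheta _r]}^2(\cc d)=A^2_{(\omega )}(\cc d)$. The only minor difference is that you establish the explicit $L^2$ identity by a direct orthogonality computation in polar coordinates, whereas the paper extracts it from Theorem \ref{cA2sCharExt} after computing the same Gamma integral.
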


\par

\begin{proof}
Since
$$
e^{-M_{1, r}(z)}= \prod 
_{j=1}^de^{-r_j|z_j|^{\frac{2}{2\tau+1}}},
$$
Lemma \ref{Lemma:SplitWeights} shows that we may assume that
$d=1$, giving that $r=r_1$ and $\alpha _0=1$.

\par

In view of Theorem \ref{cA2sCharExt} we have
$\maclA _{[\vartheta ]}^2(\cc d)= A_{(\omega _1)}^2(\cc d)$
with equality in norms, when
\begin{align*}
    	\vartheta(\alpha) = 
    	\left( 
    	\frac{1}{\alpha!}
    	\int_0^{\infty} e^{-2r t^{\frac{1}{2\tau +1}} } 
    	t^{\alpha} \, dt
    	\right)
    	^{\frac 12},
\end{align*}
provided it is verified that \eqref{varthetaCond} holds for every $r>0$.
By taking $u= 2rt^{\frac{1}{2\tau +1}}$ as new variables of integration we obtain
\begin{align*}
    	\vartheta(\alpha)
    	&=
    	\left(
    	(2\tau +1)
    	(2r)^{-(2\tau +1)(\alpha +1)}
        \frac 1{\alpha !}\int_0^{\infty} e^{-u} 
        u^{\alpha(2\tau +1)+2\tau}
        \, du
        \right )^{\frac 12}
\\[1ex]
        &=
        \left(
        (2\tau +1)
        (2r)^{-(2\tau +1)(\alpha +1)}
        \left (
        \frac {\Gamma
        \left (
        (2\tau +1)(\alpha +1)
        \right )}{\alpha !}
        \right )
        \right )
        ^{\frac 12} .
\end{align*}
Since $\Gamma ( (2\tau +1)(\alpha +1))\gtrsim r^{|\alpha |}$
for every $r>0$, by Stirling's formula, \eqref{varthetaCond}
holds for every $r>0$. This implies that $\vartheta =\vartheta _{1,r}$,
and the result follows.
\end{proof}

\par

We also need the following version of Stirling's formula.

\par

\begin{lemma}\label{Lemma:GammaReformulation1}
Let $\alpha \ge 0$ be an integer and let $\tau >-\frac 12$. Then
\begin{align}
\frac{\displaystyle{\Gamma 
\left ( 
(2\tau +1)(\alpha+1)
\right )}}
{\alpha !}
&\asymp
\left (
2\tau +1
\right )
^{(2\tau +1) \cdot \alpha}
(\alpha +1)^{\tau}\cdot \alpha !^{2\tau}.
\label{Eq:Stirling1}
\end{align}
\end{lemma}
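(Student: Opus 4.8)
The plan is to reduce the claim directly to Stirling's formula $\Gamma(x)=\sqrt{2\pi}\,x^{x-\frac 12}e^{-x}(1+O(1/x))$. Setting $\beta = 2\tau +1>0$ and $n=\alpha +1$, the left-hand side of \eqref{Eq:Stirling1} is $\Gamma(\beta n)/\Gamma(n)$, and since $\beta n,\, n\to\infty$ as $\alpha\to\infty$, both the numerator and the denominator are amenable to Stirling.

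First I would form the Stirling approximation of the ratio, obtaining
\[
\frac{\Gamma(\beta n)}{\Gamma(n)}
\sim
\frac{(\beta n)^{\beta n-\frac 12}e^{-\beta n}}{n^{n-\frac 12}e^{-n}}
=
\beta^{\beta n-\frac 12}\, n^{(\beta-1)n}\, e^{-(\beta-1)n}.
\]
Because $\beta-1=2\tau$, the last two factors become $n^{2\tau n}e^{-2\tau n}$. Next I would re-express this quantity in terms of $\alpha !$: applying Stirling to $\alpha !=\Gamma(n)$ gives $\alpha !^{2\tau}\sim (2\pi)^{\tau}\,n^{2\tau n}\,n^{-\tau}\,e^{-2\tau n}$, whence $n^{2\tau n}e^{-2\tau n}\asymp \alpha !^{2\tau}\,n^{\tau}$ with constants depending only on $\tau$. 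Substituting this relation, and splitting the prefactor as $\beta^{\beta n-\frac 12}=\beta^{\beta-\frac 12}\cdot\beta^{\beta\alpha}$ (the factor $\beta^{\beta-\frac 12}$ being a positive constant), yields
\[
\frac{\Gamma(\beta n)}{\alpha !}
\asymp
(2\tau +1)^{(2\tau +1)\alpha}\,(\alpha +1)^{\tau}\,\alpha !^{2\tau},
\]
which is precisely \eqref{Eq:Stirling1}.

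The one point requiring care, rather than a genuine obstacle, is the passage from the asymptotic equivalence, which is valid for large $\alpha$, to the uniform two-sided bound $\asymp$ asserted for \emph{every} integer $\alpha\ge 0$. To handle this I would observe that both sides of \eqref{Eq:Stirling1} are strictly positive and finite for each fixed $\alpha$, since $\beta(\alpha +1)>0$ keeps $\Gamma(\beta(\alpha+1))$ positive, and that by the computation above the ratio of the two sides converges to a positive constant as $\alpha\to\infty$. Consequently the ratio is bounded above and below by positive constants on the tail, and on the finitely many remaining small values it is again bounded away from $0$ and $\infty$; combining these gives the estimate uniformly in $\alpha$. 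Minor bookkeeping is also needed because $2\tau$ may be negative when $-\frac 12<\tau<0$, but Stirling applies without change since the arguments $\beta n$ and $n$ stay positive throughout.
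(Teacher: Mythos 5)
Your proof is correct and takes essentially the same route as the paper: Stirling's formula applied to both Gamma factors, followed by converting $n^{2\tau n}e^{-2\tau n}$ back into $\alpha !^{2\tau}$ via Stirling once more, with the uniform bound for all $\alpha \ge 0$ obtained from positivity on the finitely many small values. The only real difference is bookkeeping: by substituting $n=\alpha +1$ you split $(\beta n)^{\beta n-\frac 12}$ exactly into $\beta ^{\beta n-\frac 12}n^{\beta n-\frac 12}$, which avoids the cross-term $\bigl (1+\tfrac {2\tau}{(2\tau +1)\alpha}\bigr )^{(2\tau +1)\alpha +2\tau +\frac 12}$ that the paper handles with the auxiliary limit $(1+x/t)^t\to e^x$, so your version is marginally cleaner.
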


\par

Lemma \ref{Lemma:GammaReformulation1} follows by
repeated applications of Stirling's formula and the
standard limit
$$
\lim _{t\to \infty}
\left (
1+\frac xt
\right )
^{t} = e^x
$$
for every $x\in \mathbf R$. In order to be self-contained
we present the arguments.

\par

\begin{proof}
The result is obviously true for $\alpha =0$. For $\alpha \ge 1$
we have $\alpha ^{\tau} \asymp (\alpha +1)^{\tau}$.
A combination of the latter relations and Stirling's formula gives
\begin{multline}\label{Eq:GammaReformulation1}
\frac{\displaystyle{\Gamma 
\left ( 
(2\tau +1)(\alpha+1) 
\right )}}
{\alpha !}
\asymp
 \frac{\displaystyle{ \left ( 
(2\tau +1) \alpha + 2\tau 
 \right )
 ^{(2\tau +1) \cdot \alpha +2\tau +\frac 12}}}
 {\displaystyle{e^{\alpha + 2\tau (\alpha +1)}}}
 \cdot
 \frac{e^\alpha}{\alpha^{\alpha + \frac 12}}
\\[1ex]
=
\frac{\displaystyle{
\left (
2\tau +1
\right )
^{(2\tau +1) \cdot \alpha + 2\tau
+\frac 12}
\left (
1 + \frac{2\tau}{(2\tau +1)\alpha}
\right )
^{(2\tau +1) \cdot \alpha + 2\tau
+\frac 12}
\alpha ^{2\tau (\alpha +1)}
}}
{e^{2\tau (\alpha + 1)}}
\\[1ex]
\asymp
\left (
2\tau +1
\right )
^{(2\tau +1) \cdot \alpha}
\frac{\displaystyle{
\alpha ^{2\tau (\alpha +1)}
}}
{e^{2\tau \alpha}}
\cdot
\left (
1+\frac {2\tau}{(2\tau +1)\alpha}
\right )^{(2\tau +1)\alpha}
\\[1ex]
\asymp
\left (
2\tau +1
\right )
^{(2\tau +1) \cdot \alpha}
\frac{\displaystyle{
\alpha ^{2\tau (\alpha +1)}
}}
{e^{2\tau \alpha}}
\asymp
\left (
2\tau +1
\right )
^{(2\tau +1)\cdot \alpha}
(\alpha +1)^{\tau}\alpha !^{2\tau}.
\end{multline}
In the forth relation we have used the fact that
$$
\left (
1+ \frac {x_1}\alpha
\right )^ {x_2\alpha},\qquad x_1,x_2\in \mathbf R_+,
$$
increases with $\alpha$ and has the limit $e^{x_1x_2}$ as $\alpha$
tends to infinity. This gives the result.
\end{proof}

\par

Proposition \ref{Prop:MainResult1} essentially follows from the following lemma.

\par

\begin{lemma}\label{Lemma:Omega34Equiv}
Let $\tau >-\frac 12$, $r_0,r\in \rr d_+$ be such that $r_0<r$
and $F\in A(\cc d)$. Then the following is true:
\begin{enumerate}
\item If
\begin{equation}\label{Eq:CoeffEst1}
|c(F,\alpha )|
\lesssim
\left (
\frac {2 r_0}{2\tau +1}
\right )
^{\frac {2\tau +1}{2} \cdot \alpha}\alpha !^{-\tau},
\end{equation}
then
\begin{equation}\label{Eq:FuncEst1}
|F(z)|\lesssim e^{r_1|z_1|^{\frac {2}{2\tau +1}} +\cdots + r_d|z_d|
^{\frac {2}{2\tau +1}}} \text ;
\end{equation}

\vrum

\item If
\begin{equation}\label{Eq:FuncEst2}
|F(z)|\lesssim e^{r_{0,1}|z_1|^{\frac {2}{2\tau +1}} +\cdots + r_{0,d}|z_d|
^{\frac {2}{2\tau +1}}}
\end{equation}
then
\begin{equation}\label{Eq:CoeffEst2}
|c(F,\alpha )|
\lesssim
\left (
\frac {2 r}{2\tau +1}
\right ) ^{\frac {2\tau +1}{2} \cdot \alpha}\alpha !^{-\tau}
\end{equation}
\end{enumerate}
\end{lemma}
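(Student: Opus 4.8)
The plan is to route both implications through the weighted $L^2$-theory of Lemma \ref{Lemma:EstFtimesEfunction}, using the $L^2$--$L^\infty$ comparison of Lemma \ref{Lemma:EstimateL2LInfty} to bridge the pointwise estimates \eqref{Eq:FuncEst1}--\eqref{Eq:FuncEst2} and the coefficient estimates \eqref{Eq:CoeffEst1}--\eqref{Eq:CoeffEst2}. Since $M_{1,r}$ splits as a product over the complex variables, Lemma \ref{Lemma:SplitWeights} reduces everything to $d=1$, so that $r$, $r_0$ and $\alpha$ may be treated as scalars.

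The first substantive step is to identify the Bargmann weight $\vartheta_r$ from Lemma \ref{Lemma:EstFtimesEfunction} with the right-hand sides of \eqref{Eq:CoeffEst1}--\eqref{Eq:CoeffEst2}. Feeding the Stirling asymptotics of Lemma \ref{Lemma:GammaReformulation1} into the formula for $\vartheta_r$, I expect
$$
\vartheta _r(\alpha ) \asymp C_\tau \left ( \frac {2r}{2\tau +1}\right )^{-\frac {2\tau +1}2\alpha }(\alpha +1)^{\frac \tau 2}\,\alpha !^{\tau },
$$
so that $\vartheta_r(\alpha)^{-1}$ agrees with the target weight $\left(\frac{2r}{2\tau+1}\right)^{\frac{2\tau+1}2\alpha}\alpha!^{-\tau}$ up to the constant $C_\tau$ and the polynomial factor $(\alpha+1)^{\tau/2}$.

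For part (1) I would fix an intermediate radius $r_1$ with $r_0<r_1<r$. The hypothesis \eqref{Eq:CoeffEst1} together with the displayed asymptotics for $\vartheta_{r_1}$ gives $|c(F,\alpha)\vartheta_{r_1}(\alpha)|\lesssim (r_0/r_1)^{\frac{2\tau+1}2\alpha}(\alpha+1)^{\tau/2}$; since $2\tau+1>0$ and $r_0<r_1$, the geometric factor dominates the polynomial one, so these weighted coefficients are square summable and Lemma \ref{Lemma:EstFtimesEfunction} yields $F e^{-M_{1,r_1}}\in L^2(\cc d)$. Applying Lemma \ref{Lemma:EstimateL2LInfty} with $q=2$, $p=\infty$ and radii $r_1<r$ then gives $F e^{-M_{1,r}}\in L^\infty(\cc d)$, which is exactly \eqref{Eq:FuncEst1}.

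Part (2) runs the same chain in reverse. Starting from \eqref{Eq:FuncEst2}, i.e.\ $F e^{-M_{1,r_0}}\in L^\infty(\cc d)$, Lemma \ref{Lemma:EstimateL2LInfty} with $q=\infty$, $p=2$ and radii $r_0<r_1$ (again $r_0<r_1<r$) gives $F e^{-M_{1,r_1}}\in L^2(\cc d)$, whence Lemma \ref{Lemma:EstFtimesEfunction} forces $\sum_\alpha|c(F,\alpha)\vartheta_{r_1}(\alpha)|^2<\infty$, and in particular $|c(F,\alpha)|\lesssim \vartheta_{r_1}(\alpha)^{-1}$. Inserting the asymptotics for $\vartheta_{r_1}$ and using once more that the geometric gain from $r_1<r$ absorbs the polynomial factor $(\alpha+1)^{-\tau/2}$ produces \eqref{Eq:CoeffEst2}. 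The only delicate point throughout is the constant bookkeeping in the Stirling identification of $\vartheta_r$ together with the choice of the intermediate radius, since it is precisely the strict inequality $r_0<r$ that supplies the geometric room needed to swallow the polynomial and constant factors; everything else is a routine application of the quoted lemmas.
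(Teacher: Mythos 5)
Your proposal is correct and follows essentially the same route as the paper's own proof: both pass between the pointwise bounds and weighted $\ell^2$ coefficient bounds via Lemma \ref{Lemma:EstimateL2LInfty} and Lemma \ref{Lemma:EstFtimesEfunction}, identify $\vartheta _r$ with the target weight $\left (\frac {2r}{2\tau +1}\right )^{\frac {2\tau +1}2\alpha}\alpha !^{-\tau}$ through the Stirling asymptotics of Lemma \ref{Lemma:GammaReformulation1}, and exploit the strict inequality $r_0<r$ through an intermediate radius to absorb constant and polynomial factors. If anything, your treatment of part (2) is slightly more careful than the paper's, since you absorb the factor $(\alpha +1)^{-\tau /2}$ into the geometric gain from $r_1<r$, whereas the paper discards that factor outright, a step that is only immediate when $\tau \ge 0$.
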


\par

\begin{proof}
Let $\vartheta _{r}$ be the same as in Lemma
\ref{Lemma:EstFtimesEfunction}.
First we prove (1). Suppose that \eqref{Eq:CoeffEst1} holds, let
$r_1\in \rr d_+$ be such that
$r_0 <r_1<r$
and let $\alpha _0=(1,\dots ,1)\in \nn d$. Also let $M_{1,r}$  be the same as in
Lemma \ref{Lemma:EstimateL2LInfty}. Then Lemmata
\ref{Lemma:EstimateL2LInfty} and \ref{Lemma:EstFtimesEfunction} give
\begin{multline*}
\nm {F\cdot e^{-M_{1,r}}}{L^\infty (\cc d)}
\lesssim
\nm {F\cdot e^{-M_{1,r_1}}}{L^2 (\cc d)}
\\[1ex]
\asymp
\left (
\sum_{\alpha \in \nn d}
        \left |
        c(F,\alpha ) \vartheta _{r_1}(\alpha )
        \right |^2
        \right )
        ^{\frac 12}
\\[1ex]
=
\left (
\sum_{\alpha \in \nn d}
        \left |
        c(F,\alpha ) \vartheta _{r_0}(\alpha )
        \left (
        \frac {r_0}{r_1}
        \right )^{(\alpha +\alpha _0)\frac {2\tau +1}{2}}
        \right |^2
        \right )
        ^{\frac 12}
\\[1ex]
\lesssim
\sup _{\alpha \in \nn d}\left |
c(F,\alpha ) \vartheta _{r_0}(\alpha )
\prod _{j=1}^d(\alpha _j+1)^{-\frac \tau{2}}
\right |
\\[1ex]
\asymp
\sup _{\alpha \in \nn d}
\left (
|c(F,\alpha )|
\left (
\frac {2\tau +1}{2r_0}
\right )^{\frac {2\tau +1}{2}\cdot \alpha} \alpha !^{\tau}
\right ).
\end{multline*}
Here the second inequality follows from the fact that
$$
\sum _{\alpha \in \nn d} 
\left (
\left (
\prod _{j=1}^d
(\alpha _j+1)^{\frac \tau{2}}
\right )
\left (
\frac {r_0}{r_1} 
\right )
^{(2\tau +1) \cdot (\alpha +\alpha _0)}
\right )
$$
is convergent since $r_0<r_1$, and the fifth relation
follows from Lemma \ref{Lemma:GammaReformulation1}.
This implies that \eqref{Eq:FuncEst1} holds and (1) follows.

\par

Next assume that \eqref{Eq:FuncEst2} holds. By
Lemmata \ref{Lemma:EstimateL2LInfty} and
\ref{Lemma:EstFtimesEfunction} we get
\begin{multline*}
\nm {F\cdot e^{-M_{1,r_0}}}{L^\infty (\cc d)}
\gtrsim
\nm {F\cdot e^{-M_{1,r}}}{L^2 (\cc d)}
\\[1ex]
=
\left (
\sum_{\alpha \in \nn d}
        \left |
        c(F,\alpha ) \vartheta _{r}(\alpha )
        \right |^2
        \right )
        ^{\frac 12}
\ge
\sup _{\alpha \in \nn d}
\left (
\left |
c(F,\alpha )
\right | \vartheta _{r}(\alpha )
\right )
\\[1ex]
\gtrsim
\sup _{\alpha \in \nn d} 
\left (
|c(F,\alpha ) | 
\left( 
\frac{2\tau +1}{2r}
\right )
^{\frac{2\tau +1}{2}\alpha}
\prod _{j=1}^d(\alpha _j+1)^{\frac{\tau}{2}}
\alpha !^{\tau} 
\right )
\\[1ex]
\ge
\sup _{\alpha \in \nn d} 
\left (|c(F,\alpha ) | 
\left( 
\frac{2\tau +1}{2r}
\right )
^{\frac{2\tau +1}{2}\alpha}
\alpha !^{\tau}
\right ),
\end{multline*}
where the third inequality follows from
Lemma \ref{Lemma:GammaReformulation1}. This gives (2).
\end{proof}

\par

Proposition \ref{Prop:MainResult6} mainly follows from the following
result.

\par

\begin{lemma}\label{Lemma:MainResult6A}
Let $r,r_0\in \rr d_+$ be such that $r_0<r$, $s\in (0,\frac 12)$ and let
$F\in A(\cc d)$. Then the following is true:
\begin{enumerate}
\item if \eqref{Eq:MainResult6A} holds, then
\begin{equation}\label{Eq:LemmaMainResult6B}
|c(F,\alpha )|\lesssim e^{-(R_{1}|\alpha _1|^{\frac 1{2s}}
+\cdots + R_{d}|\alpha _d|^{\frac 1{2s}})},\quad
R=s \left (
\frac {1-2s}r
\right )
^{\frac {1-2s}{2s}}\text ;
\end{equation}

\vrum

\item if $R_0\in \rr d_+$ is given by
\begin{equation}\label{Eq:LemmaMainResult6C}
|c(F,\alpha )|\lesssim e^{-(R_{0,1}|\alpha _1|^{\frac 1{2s}}
+\cdots + R_{0,d}|\alpha _d|^{\frac 1{2s}})},\quad
R_0=s \left (
\frac {1-2s}{r_0}
\right )
^{\frac {1-2s}{2s}},
\end{equation}
then \eqref{Eq:MainResult6A} holds with $r$ in place of $r_0$.
\end{enumerate}
\end{lemma}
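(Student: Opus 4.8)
The plan is to prove both implications by transferring between the growth of $F$ and the size of its coefficients $c(F,\alpha)$, exploiting that the two exponents occurring in \eqref{Eq:MainResult6A} and \eqref{Eq:LemmaMainResult6B}, namely $\beta=\frac{1}{1-2s}$ and $\gamma=\frac{1}{2s}$, are conjugate: $\frac1\beta+\frac1\gamma=1$ for $s\in(0,\frac12)$. Consequently the passage ``growth $\leftrightarrow$ coefficient decay'' is governed by the Young/Legendre duality between $t\mapsto r t^{\beta}$ and $t\mapsto R t^{\gamma}$. As in the proofs of Lemmas \ref{Lemma:EstFtimesEfunction} and \ref{Lemma:Omega34Equiv}, since $e^{-M_{2,r}}$ splits as a product over $z_1,\dots,z_d$ (recall $M_{2,r}(z)=\sum_j r_j(\log\eabs{z_j})^{\beta}$ in Lemma \ref{Lemma:EstimateL2LInfty}), Lemma \ref{Lemma:SplitWeights} reduces everything to the one-dimensional case, which I treat below with $z\in\mathbf C$ and $n\in\mathbf N$.

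For part (1) I would start from $c(F,n)=\sqrt{n!}\,a_n$, where $a_n$ is the ordinary Taylor coefficient, and apply Cauchy's inequality on $|z|=\rho$ to get $|c(F,n)|\lesssim \sqrt{n!}\,\rho^{-n}\sup_{|z|=\rho}|F(z)|\lesssim \sqrt{n!}\,\rho^{-n}e^{r_0(\log\eabs{\rho})^{\beta}}$. Writing $t=\log\eabs{\rho}$ and minimizing $-nt+r_0 t^{\beta}$ over $t>0$ gives the saddle $t_*=(n/(r_0\beta))^{1/(\beta-1)}$ and a bound $|c(F,n)|\lesssim \sqrt{n!}\,e^{-R'' n^{\gamma}}$, where $R''$ is the Legendre transform value and depends on $r_0$. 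Since $\gamma>1$, the factor $\sqrt{n!}\asymp e^{\frac12 n\log n}$ is of strictly lower order than $e^{-R''n^{\gamma}}$, and as one checks $R(r)<R''$ whenever $r_0<r$, the slack $R''-R(r)>0$ absorbs both $\sqrt{n!}$ and the implicit constants, yielding \eqref{Eq:LemmaMainResult6B}.

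For part (2) I would instead estimate $F$ from its series, $|F(z)|\le \sum_n |c(F,n)|\,|z|^n/\sqrt{n!}\lesssim \sum_n e^{-R_0 n^{\gamma}}|z|^n/\sqrt{n!}$, and bound the sum by its largest term. Locating the maximizing index $n_*$ (the inverse Legendre/saddle-point computation, now in the variable $n$) produces $|F(z)|\lesssim e^{r'(\log\eabs{z})^{\beta}}$ whose exponent $r'$ is dual to $R_0$; the polynomially many significant terms and the favourable factor $1/\sqrt{n!}$ contribute only lower-order corrections. Taking $r$ in the conclusion so that $r'\le r$, which is where the gap $r_0<r$ is used, then gives \eqref{Eq:MainResult6A} with $r$ in place of $r_0$.

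A cleaner alternative, mirroring Lemma \ref{Lemma:Omega34Equiv}, is to avoid pointwise saddle points and route through Hilbert-space norms: establish the log-weight analogue of Lemma \ref{Lemma:EstFtimesEfunction}, i.e. compute the weight $\vartheta_r$ attached by Theorem \ref{cA2sCharExt} to $\omega(z)=e^{\frac12|z|^2-M_{2,r}(z)}$, and then invoke Lemma \ref{Lemma:EstimateL2LInfty} to pass between the $L^\infty$ growth estimates and the $L^2$ estimate $\nm{F e^{-M_{2,r}}}{L^2}^2\asymp\sum_n|c(F,n)\vartheta_r(n)|^2$. Here $\vartheta_r(n)\asymp n!^{-1/2}e^{K_r n^{\gamma}}$, and the main work is the asymptotic evaluation of $\frac{1}{n!}\int_0^\infty e^{-2r(\frac12\log(1+x))^{\beta}}x^n\,dx$ by the Stirling-type analysis of Lemmas \ref{Lemma:EstFtimesEfunction} and \ref{Lemma:GammaReformulation1}. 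In either route I expect the decisive and most delicate step to be the exact bookkeeping of the duality constant and the verification that the gap $r_0<r$ is wide enough to absorb the $\sqrt{n!}$ factor together with the numerical constants produced by $\eabs{\,\cdot\,}$ versus $|\,\cdot\,|$ in the Legendre transform.
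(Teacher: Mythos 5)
Your treatment of part (1) is correct, and it is genuinely more elementary than the paper's: the paper never touches Cauchy estimates, but instead reduces to $d=1$ (Lemma \ref{Lemma:SplitWeights}), trades the $L^\infty$ bounds for weighted $L^2$ bounds (Lemma \ref{Lemma:EstimateL2LInfty}), converts those into weighted $\ell ^2$ bounds on the coefficients via Theorem \ref{cA2sCharExt}, and then runs a saddle-point analysis on the resulting weight, i.{\,}e. on $g_{r,\alpha}(t)=e^{-r(\log t)^{1/(1-2s)}}t^{\alpha}$; this is exactly your ``cleaner alternative'', so your second route coincides with the paper's proof. Your absorption mechanism ($\log \sqrt{n!}=O(n\log n)=o(n^{1/(2s)})$ because $\frac 1{2s}>1$) is also the one the paper relies on.

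Part (2), however, has a genuine gap, located exactly at the step you yourself flag as decisive: the bookkeeping of the duality constant. Put $\beta =\frac 1{1-2s}$, $\gamma =\frac 1{2s}$ and take $d=1$. The Legendre computation gives $\sup _{t>0}\,(nt-r_0t^{\beta})=2R_0n^{\gamma}$ with $R_0=s\left (\frac {1-2s}{r_0}\right )^{\frac {1-2s}{2s}}$ --- note the factor $2$. Consequently the hypothesis \eqref{Eq:LemmaMainResult6C} is only ``half'' of the dual of the growth bound with exponent $r_0$, and its dual in turn is $\sup _{n}\,(nt-R_0n^{\gamma})=2^{\beta -1}r_0t^{\beta}$. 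So your largest-term bound yields $r'=2^{2s/(1-2s)}r_0$, not anything close to $r_0$: the factor $2^{2s/(1-2s)}>1$ is a fixed constant, it is not one of the lower-order corrections (at the maximizing index $n_*\asymp t^{2s/(1-2s)}$ the factor $1/\sqrt{n_*!}$ contributes only $e^{-o(t^{\beta})}$), and it cannot be absorbed in the gap $r_0<r$, because $r$ is given and may well satisfy $r<2^{2s/(1-2s)}r_0$. ``Taking $r$ in the conclusion so that $r'\le r$'' is therefore not available.

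Moreover, no refinement of your argument can close this gap, since part (2) is false with the stated constants: for $d=1$, $s=\frac 14$ (so $\beta =\gamma =2$ and $R_0=\frac 1{8r_0}$), $r_0=1$, $r=\frac {11}{10}$ and $c(F,n)=e^{-n^2/8}$, condition \eqref{Eq:LemmaMainResult6C} holds, while the largest term of $\sum _n e^{-n^2/8}x^n/\sqrt{n!}$ already gives $F(x)\ge e^{(2-o(1))(\log x)^2}$ on the positive real axis, contradicting \eqref{Eq:MainResult6A} with $r=\frac {11}{10}$. The paper's own proof does not escape this either: in its formula for $\vartheta _r$ the polar-coordinate substitution actually produces $\log \eabs {\sqrt u}=\frac 12\log (1+u)$ rather than $\log \eabs {u}$, so the correct weight behaves like $e^{2R(r)\alpha ^{\gamma}}/\sqrt{\alpha !}$ instead of $e^{R(r)\alpha ^{\gamma}}$, and with the correct weight the second inequality in \eqref{Eq:SmallParSqueze} fails. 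This factor of two is harmless for Proposition \ref{Prop:MainResult6} and Theorem \ref{Thm:MainResult6}, where only the correspondence between \emph{some/every} $r$ and \emph{some/every} $R$ matters, but it means that Lemma \ref{Lemma:MainResult6A}\,(2) as stated needs $R_0$ replaced by (essentially) $2R_0$, and that neither your route nor the paper's proves it in its present form.
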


\par

\begin{proof}
Let $r_1,r_2\in \rr d_+$ be such that $r_1<r<r_2$.
By Lemma \ref{Lemma:SplitWeights} we may assume that
$d=1$, and by Lemma \ref{Lemma:EstimateL2LInfty} the result
follows if we prove
\begin{multline}\label{Eq:SmallParSqueze}
\sup _{\alpha \in \mathbf N}
\left (
|c(F,\alpha ) e^{R_{2}\cdot \alpha ^{\frac 1{2s}}}|^2
\right )
\lesssim
\int _{\mathbf C}|F(z) e^{-r(\log \eabs z)^{\frac 1{1-2s}}}|^2\, d\lambda (z)
\\[1ex]
\lesssim
\sup _{\alpha \in \mathbf N}
\left (
|c(F,\alpha ) e^{R_{1}\cdot \alpha ^{\frac 1{2s}}}|^2
\right ),
\end{multline}
where $R_1,R_2\in \mathbf R_+$ satisfy
$$
R_j =
\left (
\frac {1-2s}{r_j}
\right )
^{\frac {1-2s}{2s}},\quad j=1,2.
$$

\par

By Theorem \ref{cA2sCharExt} we have
$$
\int _{\mathbf C}|F(z) e^{-r(\log \eabs z)^{\frac 1{1-2s}}}|^2\, d\lambda (z)
\asymp
\sum _{\alpha \in \mathbf N}
|c(F,\alpha )\vartheta _{r}(\alpha )|^2,
$$
where
$$
\vartheta _r(\alpha )=
\left (
\frac \pi{2\alpha !}
\int _0^\infty e^{-r(\log \eabs t)^{\frac 1{1-2s}}}t^\alpha \, dt
\right )^{\frac 12}.
$$

\par

Let
$$
\theta = \frac 1{1-2s}
\quad \text{and}\quad
g_{r,\alpha} (t) = e^{-r(\log t)^\theta}t^\alpha .
$$
In order to prove \eqref{Eq:SmallParSqueze},
we need to prove
\begin{equation}\label{Eq:varthetaIneq}
e^{R_2\alpha ^{\frac 1{2s}}}
\lesssim
\vartheta _{r}(\alpha )
\lesssim
e^{R_1\alpha ^{\frac 1{2s}}},
\end{equation}
which shall be reached by modifying the proof of (15)
in \cite{FeGaTo2}.

\par

We have
\begin{multline*}
\vartheta _{r}(\alpha )^2
\lesssim
\int _e^\infty e^{(r_1-r)(\log t)^\theta}g_{r_1,\alpha} (t)\, dt
\\[1ex]
\lesssim
\sup _{t\ge e}(g_{r_1,\alpha} (t))
\int _e^\infty e^{(r_1-r)(\log t)^\theta}\, dt
\asymp
\sup _{t\ge e}(g_{r_1,\alpha} (t)).
\end{multline*}

\par

By straight-forward computations it follows that
$g_{r,\alpha} (t)$ attains its global maximum for
\begin{equation}\label{Eq:tralphaFormula}
t_{r,\alpha} = \exp
\left (
\left (
\frac {\alpha }{\theta r}
\right )^{\frac {1-2s}{2s}}
\right ),
\end{equation}
and that
$$
g_{r_1,\alpha }(t_{r_1,\alpha} )
=
\exp \left (
{r_1^{1-\frac 1{2s}}}{\theta ^{-\frac 1{2s}}
(\theta -1)\alpha ^{\frac 1{2s}}}
\right )
=
e^{2R_1\cdot \alpha ^{\frac 1{2s}}},
$$
and the second inequality in \eqref{Eq:varthetaIneq} follows.

\par

In order to prove the first inequality in \eqref{Eq:varthetaIneq},
we claim that for some $c$ which is independent of $\alpha$ we have
\begin{equation}\label{Eq:t2alpaEst}
\left (
1-\frac c{t_{r_2,\alpha}}
\right )^\alpha
\gtrsim
e^{(r-r_2)(\log t_{r_2,\alpha})^\theta}.
\end{equation}
In fact, by \eqref{Eq:tralphaFormula} and Taylor's formula, 
\eqref{Eq:t2alpaEst} follows if we prove
\begin{equation}\label{Eq:t2alpaEst2}
1-\frac {\alpha c}{t_{r_2,\alpha}}
\gtrsim
e^{(r-r_2)((1-2s)\alpha /r_2)^{\frac 1{2s}}}
\end{equation}
for some $c>0$ which is independent of $\alpha$. In view of
\eqref{Eq:tralphaFormula} it follows that $t_{r_2,\alpha}$
tends to infinity as $\alpha$ turns to infinity, which implies
that the left-hand side of \eqref{Eq:t2alpaEst2} has the limit
one and the right-hand side has the limit zero as $\alpha$ tends to
infinity. Hence, \eqref{Eq:t2alpaEst} holds.

\par

By \eqref{Eq:t2alpaEst} and the fact that $\frac 1{2s}>1$ we get
\begin{multline*}
\vartheta _r(\alpha )^2\gtrsim \frac 1{\alpha !}
\int _{t_{r_2,\alpha}-c}^{t_{r_2,\alpha}}e^{-r(\log t)^\theta}t^\alpha
\, dt
\gtrsim
\frac 1{\alpha !}e^{-r(\log t_{r_2,\alpha})^\theta}
(t_{r_2,\alpha}-c)^\alpha
\\[1ex]
\gtrsim
\frac 1{\alpha !}e^{-r_2(\log t_{r_2,\alpha})^\theta}
t_{r_2,\alpha}^\alpha
=
\frac 1{\alpha !}e^{2R_2\cdot \alpha ^{\frac 1{2s}}}
\gtrsim
e^{2R_3\cdot \alpha ^{\frac 1{2s}}}.
\end{multline*}
This gives the result.
\end{proof}

\par

\subsection{Proofs of main results}

\par

Next we prove Proposition \ref{Prop:MainResult1} and thereby Theorem
\ref{Thm:MainResult1}.

\par

\begin{proof}[Proof of Proposition \ref{Prop:MainResult1}]
It is clear that (2) $\Rightarrow$ (3) $\Rightarrow$ (4) $\Rightarrow$ (5).
By Proposition
\ref{Prop:InclUltraDist} it follows that (5) implies (1). We need to prove
that (1) implies (2).

\par

Suppose (1) holds and let $r_4=r$. Choose $r_1,r_2,r_3\in \rr d_+$
such that $r_0<r_1< r_2\le r_3<r_4$ and $r_1r_4<r_2^2$,  $\chi \in
\maclR _{r_2,r_3}^\infty (\cc d)$, and let $F_{\alpha ,\chi}$ be as in Lemma
\ref{Lemma:HermiteGroch} with $r_2$ and $r_3$ in place of $r_1$ and $r_2$.
If $\tau = 1$, then
Lemma \ref{Lemma:Omega34Equiv} (2) gives
\begin{equation}\label{Eq:CoeffCondFlat1}
|c(F,\alpha )| \lesssim r_1^\alpha \alpha !^{-\frac 12}.
\end{equation}
Let
\begin{equation}\label{Eq:F0Def}
F_0(z)
=
\sum _{\alpha \in \nn d}c(F,\alpha ) \varsigma _\alpha z^\alpha .
\end{equation}
We claim that the series in \eqref{Eq:F0Def} is uniformly convergent
with respect to $z$ in $\overline{D_R(0)}$.

\par

In fact, if $|z_j|\le r_{4,j}$, then \eqref{Eq:Estvarsigma} gives
$$
|c(F,\alpha )\varsigma _\alpha z^\alpha |\lesssim \eabs \alpha ^d r_2^{-2\alpha}
r_1^\alpha r_4^\alpha = \eabs \alpha ^d \rho ^\alpha ,
$$
where $\rho \in \rr d_+$ satisfies $\rho _j= \frac {r_{1,j}r_{4,j}}{r_{2,j}^2}<1$.
Since $\sum _{\alpha \in \nn d}\eabs \alpha ^d\rho ^\alpha$ is convergent,
Weierstrass theorem
shows that \eqref{Eq:F0Def} is uniformly convergent and defines an analytic
function in $\overline {D_{r_{4,j}}(0)}$. Hence, $F_0\in A(\overline {D_{r_{4,j}}(0)})$.
Furthermore, by the support of $\chi$ we get
\begin{multline}\label{Eq:PiAF0chiComp}
\Pi _A(F_0\cdot \chi )
=
\Pi _A
\left (
\sum _{\alpha \in \nn d} c(F,\alpha )\varsigma _\alpha z^\alpha \cdot \chi
\right )
\\[1ex]
=
\sum _{\alpha \in \nn d}
c(F,\alpha )\varsigma _\alpha \Pi _A (z^\alpha \cdot \chi )
= \sum _{\alpha \in \nn d}c(F,\alpha )e_\alpha  = F.
\end{multline}
Hence (2) holds, and the proof is complete.
\end{proof}

\par

For future references we observe that if $\varsigma _\alpha$ and $\chi$
are the same as in Lemma \ref{Lemma:HermiteGroch}, then \eqref{Eq:F0Def}
shows that the relationship between $c(F,\alpha )$ and $c(F_0,\alpha )$ is given
by
\begin{equation}\label{Eq:CoeffF0FComp}
c(F_0,\alpha ) = c(F,\alpha )\varsigma _\alpha \alpha !^{\frac 12}.
\end{equation}

\par

\begin{proof}[Proof of Theorem \ref{Thm:MainResult2}]
The equivalence between (1) and (2) is clear. It is also obvious that (3)
implies (2) in view of Proposition \ref{Prop:MainResult1}, and that
(3) implies (4). We shall prove that (1) implies (3) and (4) implies (1).

\par

Suppose (1) holds. Then \eqref{Eq:CoeffCondFlat1} holds for every
$r_1\in \rr d_+$. Let $r,R\in \rr d_+$, $\chi \in \maclR _{r,R}^\infty (\cc d)$,
$F_{\alpha ,\chi}$ be as in Lemma \ref{Lemma:HermiteGroch} and let
$F_0$ be given by \eqref{Eq:F0Def}. By \eqref{Eq:CoeffCondFlat1} we have
$$
|c(F,\alpha ) \varsigma _\alpha | \lesssim \eabs \alpha ^dr^{-2\alpha}r_1^{\alpha}
$$
for every $r_1\in \rr d_+$, giving that
$$
|c(F,\alpha ) \varsigma _\alpha | \lesssim r_0^\alpha 
$$
for every $r_0\in \rr d_+$. This implies that the series in \eqref{Eq:F0Def}
is locally uniformly convergent with respect to $z$ and defines an entire function
on $\cc d$. Hence $F_0\in A(\cc d)$. Moreover, by \eqref{Eq:PiAF0chiComp}
it follows that $\Pi _A(F_0\chi )=F$, and we have proved that (1) implies (3).

\par

Next suppose that (4) holds. Then
$$
|c(F_0,\alpha )|\lesssim r^\alpha \alpha !^{\frac 12}
$$
for every $r\in \rr d_+$. By \eqref{Eq:Estvarsigma} and
\eqref{Eq:CoeffF0FComp} we get
\begin{equation*}
|c(F,\alpha )|= |c(F_0,\alpha )| \varsigma _\alpha ^{-1}\alpha !^{-\frac 12}
\lesssim
r^\alpha \varsigma _\alpha ^{-1}
\lesssim
r^{\alpha}R^{2\alpha}\alpha !^{-\frac 12}.
\end{equation*}
Since $r\in \rr d_+$ can be chosen arbitrarily small we get
$$
|c(F,\alpha )|\lesssim r_0^\alpha \alpha !^{-\frac 12}
$$
for every $r_0\in \rr d_+$. This implies that $F\in \maclA _{0,\flat _1}(\cc d)$. That is,
we have proved that (4) implies (1), and the result follows.
\end{proof}

\par

Next we prove Proposition \ref{Prop:MainResult345}.

\par

\begin{proof}[Proof of Proposition \ref{Prop:MainResult345}]
Suppose \eqref{Eq:MainResult3Eq1} holds, and let $r_1,r_2\in \rr d_+$
be such that $r_0<r_1<r_2<r$.
Then Lemma \ref{Lemma:Omega34Equiv} gives
$$
|c(F,\alpha )|
\lesssim
\left (
\frac {2r_1}{2\tau +1}
\right )^{\frac {2\tau +1}{2}\cdot \alpha }
\alpha !^{-\tau}.
$$
By Lemma \ref{Lemma:HermiteGroch} and \eqref{Eq:CoeffF0FComp}
we get
\begin{multline*}
|c(F_0,\alpha )|
\lesssim 
\left (
\frac {2r_1}{2\tau +1}
\right )^{\frac {2\tau +1}{2}\cdot \alpha } \alpha !^{-\tau}
\eabs \alpha ^d\alpha ! t_1^{-2\alpha}
\\[1ex]
\lesssim
\left (
\frac {2r_2}{2\tau +1}
\right )^{\frac {2\tau +1}{2}\cdot \alpha } \alpha !^{-(\tau -1)}
t_1^{-2\alpha}
=
\left (
\frac {2R_0}{2\tau -1}
\right )^{\frac {2\tau -1}{2}\cdot \alpha } \alpha !^{-(\tau -1)},
\end{multline*}
when
$$
R_0 = \frac {2\tau -1}{2}
\left (
\frac {2R_2}{2\tau +1}
\right )
^{\frac {2\tau +1}{2\tau -1}}t_1^{-\frac 4{2\tau -1}}.
$$
This proves (1).

\par

Suppose instead that $r_0,R_0,r_1,r_2\in \rr d_+$ are such that
\eqref{Eq:MainResult3Eq5} hold and $r_0<r_1<r_2<r$, and
that $F_0\in A(\cc d)$ satisfies \eqref{Eq:MainResult3Eq2}.
Also let $F=\Pi _A(F_0\cdot \chi )$.
Then
$$
\left (
\frac {2R_0}{2\tau -1}
\right )^{\frac {2\tau -1}{2}}
<
\left (
\frac {2r_1}{2\tau +1}
\right )^{\frac {2\tau +1}{2}}t_2^{-2}.
$$
By combining the latter estimate with Lemma \ref{Lemma:Omega34Equiv}
we get
\begin{equation*}
|c(F_0,\alpha )|
\lesssim 
\left (
\frac {2r_1}{2\tau +1}
\right )^{\frac {2\tau +1}{2}\cdot \alpha}t_2^{-2\alpha}\alpha !^{1-\tau}.
\end{equation*}
Hence Lemma \ref{Lemma:HermiteGroch} and
\eqref{Eq:CoeffF0FComp} give
$$
|c(F,\alpha )|
\lesssim 
\left (
\frac {2r_1}{2\tau +1}
\right )^{\frac {2\tau +1}{2}\cdot \alpha}\alpha !^{-\tau}
$$
and Lemma \ref{Lemma:Omega34Equiv} again implies that
\eqref{Eq:MainResult3Eq1} holds with $r_2$ in place of $r_0$.
This gives the result.
\end{proof}

\par

\begin{proof}[Proof of Theorems
\ref{Thm:MainResult3}--\ref{Thm:MainResult5}]
First suppose $\sigma \in (\frac 12,1)$, and let $\sigma _0
=\frac \sigma{2\sigma -1}$ and $\tau =\frac 1{2\sigma}$. Then
$$
\frac {2\sigma}{\sigma +1} = \frac {2}{2\tau +1}
\quad \text{and}\quad
\frac {2\sigma _0}{\sigma _0 -1} = \frac {2}{2\tau -1}.
$$
Theorem \ref{Thm:MainResult3} now follows from these observations
and Proposition \ref{Prop:MainResult345} in the case $\tau \in (\frac 12,1)$.

\par

In the same way, Theorem \ref{Thm:MainResult4} follows by
choosing $\tau =1$ in Proposition \ref{Prop:MainResult345}.

\par

Finally, suppose $\sigma \in (0,\frac 12)$, and let $\sigma _0
=\frac \sigma{1-2\sigma}$ and $\tau =\frac 1{2\sigma}$. Then
$$
\frac {2\sigma}{\sigma +1} = \frac {2}{2\tau +1}
\quad \text{and}\quad
\frac {2\sigma _0}{\sigma _0 +1} = \frac {2}{2\tau -1},
$$
and Theorem \ref{Thm:MainResult3} follows from these observations
and Proposition \ref{Prop:MainResult345} in the case $\tau >1$.
\end{proof}

\par

Next we prove Propositions \ref{Prop:MainResult6} and
\ref{Prop:MainResult6No2} and thereby Theorem
\ref{Thm:MainResult6}.

\par

\begin{proof}[Proof of Propositions \ref{Prop:MainResult6} and
\ref{Prop:MainResult6No2}]
Let $r,r_j,R_j\in \rr d_+$, $j=0,1,2,3$, be such that 
$$
r_0<r_1<r_2<r_3<r\quad \text{and}\quad
R_j=s
\left (
\frac {1-2s}{r_j}
\right )
^{\frac {1-2s}{2s}}.
$$
First suppose that $F\in A(\cc d)$ satisfies \eqref{Eq:MainResult6A}
and let $F_0$ be the formal power series expansion with coefficients
given by \eqref{Eq:CoeffF0FComp}. Then $F=\Pi _A(F_0\cdot \chi )$.

\par

By Lemma \ref{Lemma:MainResult6A} we get
$$
|c(F,\alpha )|\lesssim e^{-(R_{1,1}|\alpha _1|^{\frac 1{2s}}
+\cdots + R_{1,d}|\alpha _d|^{\frac 1{2s}})}.
$$
Hence Lemma \ref{Lemma:HermiteGroch}
and \eqref{Eq:CoeffF0FComp} give
\begin{multline*}
|c(F_0,\alpha )| \lesssim \alpha ! t_1^{-2\alpha}
e^{-(R_{1,1}|\alpha _1|^{\frac 1{2s}}
+\cdots + R_{1,d}|\alpha _d|^{\frac 1{2s}})}
\\[1ex]
\lesssim
e^{-(R_{2,1}|\alpha _1|^{\frac 1{2s}}
+\cdots + R_{2,d}|\alpha _d|^{\frac 1{2s}})}.
\end{multline*}
In the last inequality we have used the fact that $s<\frac 12$,
which implies that $R_2<R_1$ and
$$
t_1^{-2\alpha}\alpha ! \lesssim 
e^{(R_{1,1}-R_{2,1})|\alpha _1|^{\frac 1{2s}}
+\cdots + (R_{1,d}-R_{2,d})|\alpha _d|^{\frac 1{2s}}}.
$$

By applying Lemma \ref{Lemma:MainResult6A} again it
follows that $F_0$ satisfies \eqref{Eq:MainResult6B} with
$r_3$ in place of $r_0$. This gives (1).

\par

Suppose instead that $F_0\in A(\cc d)$ satisfies \eqref{Eq:MainResult6B}
and let $F=\Pi _A(F_0\cdot \chi )$. Then Lemma
\ref{Lemma:MainResult6A} gives
$$
|c(F_0,\alpha )|
\lesssim
e^{-(R_{1,1}|\alpha _1|^{\frac 1{2s}}
+\cdots + R_{1,d}|\alpha _d|^{\frac 1{2s}})},
$$
and it follows from
Lemma \ref{Lemma:HermiteGroch} and
\eqref{Eq:CoeffF0FComp} that
\begin{multline*}
|c(F,\alpha )| \lesssim \alpha ! ^{-1}t_2^{2\alpha}
e^{-(R_{1,1}|\alpha _1|^{\frac 1{2s}}
+\cdots + R_{1,d}|\alpha _d|^{\frac 1{2s}})}
\\[1ex]
\lesssim
e^{-(R_{1,1}|\alpha _1|^{\frac 1{2s}}
+\cdots + R_{1,d}|\alpha _d|^{\frac 1{2s}})}.
\end{multline*}
By applying Lemma \ref{Lemma:MainResult6A} again we deduce
\eqref{Eq:MainResult6A} with $r_2$ in place of $r_0$, and Proposition
\ref{Prop:MainResult6} follows.

\par

Proposition \ref{Prop:MainResult6No2} is a straight-forward
consequence of \eqref{Eq:CoeffF0FComp}. The details are left
for the reader.
\end{proof}

\par

\section{Characterizations of Pilipovi{\'c} spaces}\label{sec3}

\par

In this section we combine Lemma \ref{GrochSpaceBargm} with
Theorems \ref{Thm:MainResult1}--\ref{Thm:MainResult6} to
get characterizations of Pilipovi{\'c} spaces.

\par

We begin with the following characterization of
$\maclH _{\flat _1}$. The result is a straight-forward
combination of Lemma \ref{GrochSpaceBargm} and
Theorem \ref{Thm:MainResult1}. The details are left
for the reader.

\par

\begin{prop}\label{Prop:MainResult1Conseq}
Let $\phi$ be as in \eqref{phidef}, $r\in \rr d_+$, $\chi _r$
be the characteristic function for
$\overline{D_r(0)}$ and let $s>0$. Then the following conditions
are equivalent:
\begin{enumerate}
\item $f\in \maclH _{\flat _1}(\rr d)$;

\vrum

\item $f=V_\phi ^*F$ for some $F\in \maclE _s'(\rr {2d})$;

\vrum

\item $f=V_\phi ^*F$ for some $F\in \mascE '(\rr {2d})
\cap L^\infty (\rr {2d})$;

\vrum

\item $f=V_\phi ^*F$ for some $F\in \mascE '(\rr {2d})
\cap L^\infty (\rr {2d})$ which satisfies
\begin{equation}\label{Eq:F0toF}
F(x,\xi )= F_0(x-i\xi )e^{-i\scal x\xi}\chi (x,\xi )
\end{equation}
for some $r\in \rr d_+$, $\chi =\chi _r$ and
$F_0\in A(\overline {D_r(0)})$.
\end{enumerate}
\end{prop}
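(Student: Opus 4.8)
The plan is to push the whole statement to the Bargmann side, where Theorem \ref{Thm:MainResult1} already furnishes the matching chain of equivalences, using Lemma \ref{GrochSpaceBargm} as the dictionary between the two pictures. Recall that $\mathfrak V_d$ restricts to a bijection $\mathfrak V_d\colon \maclH _{\flat _1}(\rr d)\to \maclA _{\flat _1}(\cc d)$; hence condition (1) is equivalent to $\mathfrak V_d f\in \maclA _{\flat _1}(\cc d)$, that is, to condition (1) of Theorem \ref{Thm:MainResult1} for $F=\mathfrak V_d f$. Each of the remaining conditions is of the form $f=V_\phi ^*F$ with $F$ in a prescribed class, and Lemma \ref{GrochSpaceBargm} turns such an $f$ into $\mathfrak V_d f=\Pi _AF_0$, where $F_0$ arises from $F$ by the substitution \eqref{Eq:FtoF0}. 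So everything reduces to understanding how \eqref{Eq:FtoF0} transports the relevant classes between $\rr {2d}\simeq \cc d$ and the Bargmann image.

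First I would record that the assignment $F\mapsto F_0$ in \eqref{Eq:FtoF0} is the composition of the invertible linear substitution $(x,\xi)\mapsto (\sqrt 2\, x,-\sqrt 2\, \xi)$ with multiplication by the non-vanishing real-analytic weight $(2\pi ^3)^{d/4}e^{\frac 12(|x|^2+|\xi|^2)}e^{-i\scal x\xi}$. A real-analytic factor is a multiplier on $\mascE '$ and on each Gevrey class $\maclE _s'$, and it is bounded together with its reciprocal on compact sets; hence $F\mapsto F_0$ is a bijection that preserves compact support (up to the harmless dilation of the supporting set), preserves boundedness on compacts, and preserves the Gevrey order. It therefore restricts to bijections $\maclE _s'(\rr {2d})\to \maclE _s'(\cc d)$ and $\mascE '(\rr {2d})\cap L^\infty (\rr {2d})\to \mascE '(\cc d)\cap L^\infty (\cc d)$. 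Through Lemma \ref{GrochSpaceBargm} this matches condition (2) with condition (6) of Theorem \ref{Thm:MainResult1} and condition (3) with condition (5) there, both of which are equivalent to $\mathfrak V_d f\in \maclA _{\flat _1}(\cc d)$. This disposes of $(1)\Leftrightarrow(2)\Leftrightarrow(3)$, and $(4)\Rightarrow(3)$ is immediate since any $F$ of the form \eqref{Eq:F0toF} is bounded and compactly supported.

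It remains to prove $(1)\Rightarrow(4)$, i.e. to manufacture a density of the special structured form \eqref{Eq:F0toF} that reproduces $f$ under $V_\phi ^*$. By Theorem \ref{Thm:MainResult1}(4), or more concretely by the construction in the proof of Proposition \ref{Prop:MainResult1}, one may use the characteristic function $\chi =\chi _r$ and write $\mathfrak V_d f=\Pi _A(G\cdot \chi _r)$ with $G\in A(\overline {D_r(0)})$ given coefficientwise via \eqref{Eq:F0Def} and \eqref{Eq:CoeffF0FComp}. I would then define $F$ directly by \eqref{Eq:F0toF}, with its analytic factor fabricated from $G$ and the admissible radii fixed through Lemma \ref{Lemma:HermiteGroch}, and verify $V_\phi ^*F=f$ by checking, via Lemma \ref{GrochSpaceBargm}, that $\Pi _A$ of the attached Bargmann input equals $\mathfrak V_d f$.

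The hard part will be exactly this verification. Feeding a structured $F$ from \eqref{Eq:F0toF} through \eqref{Eq:FtoF0} produces a Bargmann input of the shape $F_0(\sqrt 2\, z)\,e^{\frac 12\scal zz}\,e^{|\IM z|^2}\chi _r$: the product $F_0(\sqrt 2\, z)e^{\frac 12\scal zz}$ is analytic, but the factor $e^{|\IM z|^2}$ is not, so the input is merely an element of $\mascE '(\cc d)\cap L^\infty (\cc d)$ rather than an analytic function times a radial cutoff. In particular the reproducing integral is no longer diagonal in the monomials $e_\alpha$, as it was in Lemma \ref{Lemma:HermiteGroch}, because this extra weight couples different powers of $z$. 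I expect the coupling to be controllable—after passing to polar coordinates in each complex variable the off-diagonal terms should be absorbed and the map $G\mapsto \Pi _A(\text{structured input})$ should remain a bijection onto $\maclA _{\flat _1}(\cc d)$—so that the structured $F$ reproducing $f$ does exist; carrying out this bookkeeping, while tracking the radii forced by Lemma \ref{Lemma:HermiteGroch}, is the one genuinely technical point. By contrast the equivalences with conditions (2) and (3) fall out immediately from the dictionary above.
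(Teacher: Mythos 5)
Your overall strategy---transporting everything to the Bargmann side with Lemma \ref{GrochSpaceBargm} and invoking Theorem \ref{Thm:MainResult1}---is exactly the paper's intended route, and your handling of $(1)\Leftrightarrow(2)\Leftrightarrow(3)$ and of $(4)\Rightarrow(3)$ is correct. The problem is $(1)\Rightarrow(4)$, which is the only nontrivial implication left, and which you do not prove: after correctly computing that the \eqref{Eq:FtoF0}-transport of a structured density \eqref{Eq:F0toF} carries the spurious factor $e^{|\IM z|^2}$, you reduce the matter to the surjectivity of $H\mapsto \Pi _A\bigl (H\, e^{|\IM \, \cdot \, |^2}\chi \bigr )$ onto $\maclA _{\flat _1}(\cc d)$ and then merely assert that you ``expect the coupling to be controllable.'' That is not an argument. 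Since $-|w|^2+|\IM w|^2=-|\operatorname{Re} w|^2$ is not radial, the coefficient of $z^\beta$ in $\Pi _A\bigl (w^\alpha e^{|\IM w|^2}\chi _r\bigr )$ involves the angular integrals $\int _0^{2\pi}e^{i(\alpha _j-\beta _j)\theta}e^{-u^2\cos ^2\theta}\, d\theta$, which are nonzero whenever $\alpha _j-\beta _j$ is even; so the operator genuinely mixes infinitely many monomials, and nothing in the paper (in particular not Lemma \ref{Lemma:HermiteGroch}, whose proof hinges on radiality) inverts such a non-diagonal map. As it stands, your proposal therefore has a real gap at the decisive step.

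What your computation actually uncovered is a phase mismatch, and the way out is to make the phases cancel rather than to fight the coupling. If $F(y,\eta )=F_0(y-i\eta )e^{-ic\scal y\eta}\chi _r(y,\eta )$, then the substitution $(y,\eta )=(\sqrt 2\, x,-\sqrt 2\, \xi )$ in \eqref{Eq:FtoF0} produces the composite phase $e^{2ic\scal x\xi}e^{-i\scal x\xi}$, which is trivial precisely when $c=\frac 12$, i.{\,}e. for the half-phase $e^{-i\scal x\xi /2}$ occurring in \eqref{bargstft2}; the full phase printed in \eqref{Eq:F0toF} is what produced your $e^{|\IM z|^2}$ and appears inconsistent with \eqref{Eq:FtoF0} and with the remark following the proposition. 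With $c=\frac 12$ the transported input is exactly $(2\pi ^3)^{\frac d4}F_0(\sqrt 2\, z)\cdot e^{\frac 12|z|^2}\chi _{r/\sqrt 2}(z)$, an analytic function times the cutoff $e^{\frac 12|\, \cdot \, |^2}\chi _{r/\sqrt 2}\in \maclR ^\infty (\cc d)$ (this absorption of $e^{\frac 12(|x|^2+|\xi |^2)}$ into $\maclR ^\infty$ is precisely the content of the remark), and Theorem \ref{Thm:MainResult1}, applied with this particular $\chi$ in condition (3), immediately yields $(1)\Rightarrow (4)$: choose $H$ analytic with $\mathfrak V_df=\Pi _A\bigl (H\cdot e^{\frac 12|\, \cdot \, |^2}\chi _\rho \bigr )$, set $F_0(w)=(2\pi ^3)^{-\frac d4}H(w/\sqrt 2)$, and conclude $V_\phi ^*F=f$ from the injectivity of $\mathfrak V_d$. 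So either you adopt this reading, in which case the proof is the routine dictionary argument you set up in your first two paragraphs, or you keep the literal phase, in which case you owe a proof of the surjectivity you conjectured; in neither case is the proposal complete as written.
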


\par

\begin{rem}
It is clear that $\chi$ in Proposition
\ref{Prop:MainResult1Conseq} can be chosen as any
$\chi \in \maclR ^\infty (\cc d)$ with suitable
support properties.
\end{rem}

\par

\begin{rem}
In \eqref{Eq:FtoF0} there is a factor
$e^{\frac 12(|x|^2+|\xi |^2)}$ which is absent in
\eqref{Eq:F0toF}. We notice that this factor is not needed
in \eqref{Eq:F0toF} because $\maclR ^\infty _{t_1,t_2}(\cc d)$
is invariant under multiplications of such functions.
\end{rem}

\par

The next result follows from Lemma \ref{GrochSpaceBargm}
and Theorems \ref{Thm:MainResult2}--\ref{Thm:MainResult6}.
The details are left for the reader.

\par

\begin{prop}\label{Prop:MainResult2Conseq}
Let $\phi$ be as in \eqref{phidef} and $\chi \in
\maclR ^\infty (\cc d)$. Then the following conditions
are equivalent:
\begin{enumerate}
\item $f\in \maclH _{0,\flat _1}(\rr d)$;

\vrum

\item $f=V_\phi ^*F$ for some $F\in \mascE '(\rr {2d})
\cap L^\infty (\rr {2d})$ which satisfies \eqref{Eq:F0toF}
for some $F_0\in A(\cc d)$.
\end{enumerate}
\end{prop}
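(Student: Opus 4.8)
The plan is to transport the entire statement to the Bargmann side, where Theorem \ref{Thm:MainResult2} applies directly, and then read the conclusion back through Lemma \ref{GrochSpaceBargm}. The key bridge is that $\mathfrak V_d$ restricts to a bijection from $\maclH _{0,\flat _1}(\rd)$ onto $\maclA _{0,\flat _1}(\cc d)$, so condition (1) is equivalent to $\mathfrak V_df\in \maclA _{0,\flat _1}(\cc d)$. The task therefore reduces to reconciling the two descriptions of membership in $\maclA _{0,\flat _1}(\cc d)$: the one coming from Theorem \ref{Thm:MainResult2}(i), namely $\mathfrak V_df=\Pi _A(G_0\cdot \chi )$ for some entire $G_0$ with the given $\chi \in \maclR ^\infty (\cc d)$, and the one coming from Lemma \ref{GrochSpaceBargm}, namely that the Bargmann transform of $f=V_\phi ^*F$ equals $\Pi _A\Phi$, with $\Phi$ obtained from $F$ by the explicit formula \eqref{Eq:FtoF0}.

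First I would prove (2) $\Rightarrow$ (1). If $F$ has the form \eqref{Eq:F0toF}, then it is compactly supported (by the support of $\chi$) and bounded, so $F\in \mascE '(\rr {2d})\cap L^\infty (\rr {2d})$ and Lemma \ref{GrochSpaceBargm} yields $\mathfrak V_d(V_\phi ^*F)=\Pi _A\Phi$. Substituting \eqref{Eq:F0toF} into \eqref{Eq:FtoF0} and simplifying the exponentials, one gets $\Phi (z)=c_d\,F_0(\sqrt 2\,z)\,\chi _1(z)$ for a dimensional constant $c_d$, where $F_0(\sqrt 2\,\cdot\,)$ is again entire and $\chi _1$ collects $\chi$ together with the Gaussian factor $e^{|z|^2/2}$ and the residual phase. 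After absorbing the holomorphic part of these multipliers into the entire factor and invoking the invariance of $\maclR ^\infty (\cc d)$ under the remaining ones (precisely the content of the two remarks following Proposition \ref{Prop:MainResult1Conseq}), $\Phi$ is exhibited as an entire function times an element of $\maclR ^\infty (\cc d)$. Theorem \ref{Thm:MainResult2}(i) then places $\Pi _A\Phi=\mathfrak V_df$ in $\maclA _{0,\flat _1}(\cc d)$, which is (1).

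For (1) $\Rightarrow$ (2) I would run the same chain backwards. If $f\in \maclH _{0,\flat _1}(\rd)$, then $\mathfrak V_df\in \maclA _{0,\flat _1}(\cc d)$, and Theorem \ref{Thm:MainResult2}(i) supplies an entire $G_0$ with $\mathfrak V_df=\Pi _A(G_0\cdot \chi )$ for the given $\chi$. Put $\Phi =G_0\cdot \chi \in L^\infty (\cc d)\cap \mascE '(\cc d)$ and solve \eqref{Eq:FtoF0} for $F$; since $\Phi$ is compactly supported and $G_0$ is locally bounded, the resulting $F$ lies in $\mascE '(\rr {2d})\cap L^\infty (\rr {2d})$ and satisfies $\mathfrak V_d(V_\phi ^*F)=\Pi _A\Phi =\mathfrak V_df$, whence $V_\phi ^*F=f$ by injectivity of $\mathfrak V_d$. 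Inverting the exponential factors in \eqref{Eq:FtoF0} rewrites this $F$ in the shape \eqref{Eq:F0toF} with an entire $F_0$, giving (2). In fact the whole argument is the verbatim analogue of the proof of Proposition \ref{Prop:MainResult1Conseq}, the only change being that the functions analytic on a polydisc in the Roumieu case are replaced by entire functions, matching the passage from ``for some $r$'' to ``for every $r$''.

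The step I expect to be the main obstacle is the bookkeeping of the exponential and phase factors when moving between \eqref{Eq:FtoF0} and \eqref{Eq:F0toF}. One must verify that, after substitution, the non-entire part of $\Phi$ truly reduces to a multiplier under which $\maclR ^\infty (\cc d)$ is invariant, so that Theorem \ref{Thm:MainResult2} is applied to an honest element of $\maclR ^\infty (\cc d)$ rather than merely to a function with radial modulus. The two remarks after Proposition \ref{Prop:MainResult1Conseq} isolate exactly the Gaussian factor $e^{\frac 12(|x|^2+|\xi |^2)}$ that accounts for the discrepancy between \eqref{Eq:FtoF0} and \eqref{Eq:F0toF} and confirm that it does not leave the class; the surviving phase must then be shown to combine with $F_0$ into an entire factor. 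Once this factor-chasing is carried out, the equivalence follows at once from the chain above.
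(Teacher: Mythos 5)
Your overall route -- transporting everything to the Bargmann side via the bijection $\mathfrak V_d\colon \maclH _{0,\flat _1}(\rr d)\to \maclA _{0,\flat _1}(\cc d)$, applying Theorem \ref{Thm:MainResult2}, and translating back through Lemma \ref{GrochSpaceBargm} -- is exactly the proof the paper intends (the paper only states that the result ``follows from Lemma \ref{GrochSpaceBargm} and Theorems \ref{Thm:MainResult2}--\ref{Thm:MainResult6}''). However, the step you defer as ``factor-chasing'' is not bookkeeping; it is where the proof stands or falls, and as you describe it, it fails. Concretely: substituting \eqref{Eq:F0toF} into \eqref{Eq:FtoF0}, the phase of \eqref{Eq:F0toF} evaluated at $(\sqrt 2\, x,-\sqrt 2\, \xi )$ is $e^{-i\scal {\sqrt 2 x}{-\sqrt 2 \xi}}=e^{2i\scal x\xi}$, which multiplied by the factor $e^{-i\scal x\xi}$ of \eqref{Eq:FtoF0} leaves the residual phase $e^{i\scal x\xi}$, not $1$. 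Since $x_j\xi _j=\tfrac 12 \IM (z_j^2)$, this residual factor equals $e^{\frac 14 \scal zz}\, e^{-\frac 14 \overline{\scal zz}}$. The holomorphic half $e^{\frac 14\scal zz}$ can indeed be absorbed into the entire factor, as you propose; but the remaining half $e^{-\frac 14\overline{\scal zz}}$ is anti-holomorphic, unimodular and not radial, so it can go neither into $F_0$ (it is not entire) nor into the cutoff (elements of $\maclR ^\infty (\cc d)$ are non-negative by Definition \ref{Def:RClass}). With such a factor present, Theorem \ref{Thm:MainResult2} simply does not apply to $\Phi$, and the mechanism it rests on collapses: in Lemma \ref{Lemma:HermiteGroch} it is precisely the radial symmetry of $\chi$ that makes $\Pi _A(z^\alpha \cdot \chi )$ proportional to $e_\alpha$, and an anti-holomorphic phase destroys this diagonalization. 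The same obstruction appears in your converse direction, where $U_{\mathfrak V}^{-1}(G_0\cdot \chi )$ does not have the literal shape \eqref{Eq:F0toF}.

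What this computation actually reveals is that the phases cancel exactly if and only if \eqref{Eq:F0toF} carries the half phase $e^{-i\scal x\xi /2}$ -- consistent with the half phase in \eqref{bargstft2} -- rather than $e^{-i\scal x\xi}$ as printed; with that reading one gets $\Phi (z)=c_d\, F_0(\sqrt 2\, z)\, e^{\frac 12 |z|^2}\chi (\sqrt 2\, \overline z)$, the factor $e^{\frac 12|z|^2}\chi (\sqrt 2\, \overline z)$ does lie in $\maclR ^\infty (\cc d)$ (this is what the remarks after Proposition \ref{Prop:MainResult1Conseq} cover), and both directions of your argument close exactly as you outline. So the verdict is: right strategy, identical to the paper's, but the single step you left open is a genuine gap -- carrying it out requires correcting the phase in \eqref{Eq:F0toF}, and cannot be completed by the absorption argument you state for the formula as written.
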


\par

\begin{prop}\label{Prop:MainResult3Conseq}
Let $\phi$ be as in \eqref{phidef}, $\chi \in
\maclR ^\infty (\cc d)$, $\sigma \in (\frac 12,1)$ and let
$$
\sigma _0 = \frac \sigma{2\sigma -1}.
$$
Then the following conditions
are equivalent:
\begin{enumerate}
\item $f\in \maclH _{\flat _\sigma}(\rr d)$ ($f\in \maclH
_{0,\flat _\sigma}(\rr d)$);

\vrum

\item $f=V_\phi ^*F$ for some $F\in \mascE '(\rr {2d})
\cap L^\infty (\rr {2d})$ which satisfies \eqref{Eq:F0toF}
for some $F_0\in \maclA _{0,\flat _{\sigma _0}}'(\cc d)$
($F_0\in \maclA _{\flat _{\sigma _0}}'(\cc d)$).
\end{enumerate}
\end{prop}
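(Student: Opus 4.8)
The plan is to transport the whole equivalence to the Bargmann transform side and then read it off from Theorem \ref{Thm:MainResult3} together with Lemma \ref{GrochSpaceBargm}, exactly as Propositions \ref{Prop:MainResult1Conseq} and \ref{Prop:MainResult2Conseq} are obtained from Theorems \ref{Thm:MainResult1} and \ref{Thm:MainResult2}. The Roumieu statement and the parenthetical Beurling statement are handled by one and the same argument, with the triple $(\maclH _{\flat _\sigma},\maclA _{\flat _\sigma},\maclA _{0,\flat _{\sigma _0}}')$ replaced throughout by $(\maclH _{0,\flat _\sigma},\maclA _{0,\flat _\sigma},\maclA _{\flat _{\sigma _0}}')$, so I only write out the former.

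First I would use that $\mathfrak V_d$ restricts to a bijection from $\maclH _{\flat _\sigma}(\rr d)$ onto $\maclA _{\flat _\sigma}(\cc d)$. Hence (1) holds if and only if $\mathfrak V_df\in \maclA _{\flat _\sigma}(\cc d)$, and by the equivalence (1)$\Leftrightarrow$(3) of Theorem \ref{Thm:MainResult3}(i), applied to the given $\chi \in \maclR ^\infty (\cc d)$, this is in turn equivalent to the existence of $G\in \maclA _{0,\flat _{\sigma _0}}'(\cc d)$ with $\mathfrak V_df=\Pi _A(G\cdot \chi )$. The task therefore reduces to showing that a representation $\mathfrak V_df=\Pi _A(G\cdot \chi )$ with $G$ analytic is equivalent to a representation $f=V_\phi ^*F$ in which $F\in \mascE '(\rr {2d})\cap L^\infty (\rr {2d})$ has the normalized shape \eqref{Eq:F0toF}.

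This last equivalence is supplied by Lemma \ref{GrochSpaceBargm}: if $f=V_\phi ^*F$, then $\mathfrak V_df=\Pi _AG_F$, where $G_F$ is produced from $F$ by \eqref{Eq:FtoF0}. I would run this correspondence in both directions, taking the analytic factor $F_0$ in \eqref{Eq:F0toF} to be a dilate of $G$ and, conversely, reading $G$ off as the analytic factor of an $F$ of the form \eqref{Eq:F0toF}. Since $\chi$ is bounded and compactly supported and $F_0$ is analytic, the product defining $F$ in \eqref{Eq:F0toF} is automatically bounded and compactly supported, so $F\in \mascE '(\rr {2d})\cap L^\infty (\rr {2d})$. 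It then remains to match $\Pi _AG_F$ with $\Pi _A(G\cdot \chi )$, which is where the scaling $z\mapsto 2^{\frac 12}z$, the phase $e^{-i\scal x\xi }$ and the Gaussian prefactor $e^{\frac 12(|x|^2+|\xi |^2)}$ of \eqref{Eq:FtoF0} have to be accounted for. The analytic exponential factors are absorbed into $G$ without leaving $\maclA _{0,\flat _{\sigma _0}}'(\cc d)$, because for $\sigma _0=\sigma /(2\sigma -1)>1$ the admissible growth $e^{r|z|^{2\sigma _0/(\sigma _0-1)}}$ dominates an order-two exponential and is stable under dilation; the positive radial Gaussian weight is absorbed into $\chi$, using that $\maclR ^\infty _{t_1,t_2}(\cc d)$ is invariant under multiplication by such functions, precisely as in the Remark following Proposition \ref{Prop:MainResult1Conseq}.

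Reading the resulting chain of equivalences forwards gives (1)$\Rightarrow$(2) and backwards gives (2)$\Rightarrow$(1). The main obstacle is exactly this last bookkeeping: correctly tracking the dilation and the phase and Gaussian prefactors relating \eqref{Eq:F0toF} to \eqref{Eq:FtoF0}, and verifying that after absorbing the analytic part into $G$ and the radial weight into the cut-off, the representation $\mathfrak V_df=\Pi _A(G\cdot \chi )$ furnished by Theorem \ref{Thm:MainResult3} is genuinely realized by a compactly supported bounded symbol $F$ of the prescribed form. The Beurling case is word-for-word the same, the only change being that $G$ now ranges over $\maclA _{\flat _{\sigma _0}}'(\cc d)$, which is legitimate because the same dilation-stability, domination and $\maclR ^\infty$-invariance properties remain in force.
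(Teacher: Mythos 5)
Your proposal follows the paper's own proof exactly: the paper derives this proposition by combining Lemma \ref{GrochSpaceBargm} with Theorem \ref{Thm:MainResult3}, transporting the statement to the Bargmann side via the bijectivity of $\mathfrak V_d$ between $\maclH _{\flat _\sigma}(\rr d)$ and $\maclA _{\flat _\sigma}(\cc d)$, and it explicitly leaves the dilation/phase/Gaussian bookkeeping between \eqref{Eq:FtoF0} and \eqref{Eq:F0toF} "for the reader," which is precisely the step you isolate as the main obstacle. You invoke the same ingredients the paper relies on (the equivalence (1)$\Leftrightarrow$(3) in Theorem \ref{Thm:MainResult3}, absorption of positive radial factors into $\chi$ as in the remark following Proposition \ref{Prop:MainResult1Conseq}, and the dilation-stability and order-two domination of the growth class for $\sigma _0>1$), so your write-up is, if anything, more detailed than the paper's own argument.
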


\par

\begin{prop}\label{Prop:MainResult4Conseq}
Let $\phi$ be as in \eqref{phidef}, $\chi \in
\maclR ^\infty (\cc d)$ and let $\sigma =\frac 12$.
Then the following conditions are equivalent:
\begin{enumerate}
\item $f\in \maclH _{\flat _{\sigma}}(\rr d)$ ($f\in \maclH
_{0,\flat _\sigma}(\rr d)$);

\vrum

\item $f=V_\phi ^*F$ for some $F\in \mascE '(\rr {2d})
\cap L^\infty (\rr {2d})$ which satisfies \eqref{Eq:F0toF}
for some $F_0\in \maclA _{0,1/2}'(\cc d)$
($F_0\in \maclA _{0,1/2}(\cc d)$).
\end{enumerate}
\end{prop}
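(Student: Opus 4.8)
The plan is to follow the same template used for Propositions \ref{Prop:MainResult2Conseq} and \ref{Prop:MainResult3Conseq}, now invoking the $\sigma=\frac 12$ characterization in Theorem \ref{Thm:MainResult4} in place of Theorems \ref{Thm:MainResult2} and \ref{Thm:MainResult3}. The idea is to transport everything to the Bargmann side, where the work has already been done. Recall that $\mathfrak V_d$ restricts to a bijection from $\maclH _{\flat _{1/2}}(\rr d)$ onto $\maclA _{\flat _{1/2}}(\cc d)$ and from $\maclH _{0,\flat _{1/2}}(\rr d)$ onto $\maclA _{0,\flat _{1/2}}(\cc d)$. Hence $f\in \maclH _{\flat _{1/2}}(\rr d)$ if and only if $F:=\mathfrak V_df\in \maclA _{\flat _{1/2}}(\cc d)$, and by Theorem \ref{Thm:MainResult4}\,(i) the latter is equivalent to $F=\Pi _A(F_0\cdot \chi )$ for some $F_0\in \maclA _{0,1/2}'(\cc d)$; the Beurling statement is identical with $\maclA _{0,\flat _{1/2}}$ and $\maclA _{0,1/2}$ in place of $\maclA _{\flat _{1/2}}$ and $\maclA _{0,1/2}'$. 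Thus it remains to match the Bargmann representation $\mathfrak V_df=\Pi _A(F_0\cdot \chi )$ with the short-time Fourier representation $f=V_\phi ^*F$ of condition (2), and this is exactly the content of Lemma \ref{GrochSpaceBargm}.

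For the implication (1)$\Rightarrow$(2), having produced $F_0\in \maclA _{0,1/2}'(\cc d)$ with $\mathfrak V_df=\Pi _A(F_0\cdot \chi )$, I would read the dictionary \eqref{Eq:FtoF0} backwards to recover a short-time Fourier datum $F$ whose associated Bargmann preimage equals $F_0\cdot \chi$. Then Lemma \ref{GrochSpaceBargm} gives $\mathfrak V_d(V_\phi ^*F)=\Pi _A(F_0\cdot \chi )=\mathfrak V_df$, whence $f=V_\phi ^*F$ by injectivity of $\mathfrak V_d$. One then checks that $F$ has the asserted shape \eqref{Eq:F0toF}. Here $F_0\in \maclA _{0,1/2}'(\cc d)\subseteq A(\cc d)$ is entire, since its coefficients satisfy $|c(F_0,\alpha )|\lesssim r^\alpha$ for some $r\in \rr d_+$, so the product $F_0(\,\cdot -i\,\cdot\,)e^{-i\scal \cdo \cdo}\chi$ lies in $L^\infty (\rr {2d})\cap \mascE '(\rr {2d})$ because $\chi$ is bounded and compactly supported. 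The Gaussian factor $e^{\frac 12(|x|^2+|\xi |^2)}$ appearing in \eqref{Eq:FtoF0} is absorbed into $\chi$, using that $\maclR ^\infty (\cc d)$ is stable under multiplication by positive radial functions, exactly as noted in the remark following Proposition \ref{Prop:MainResult1Conseq}.

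The converse (2)$\Rightarrow$(1) runs the same chain backwards: given $F$ of the form \eqref{Eq:F0toF} with $F_0\in \maclA _{0,1/2}'(\cc d)$, Lemma \ref{GrochSpaceBargm} gives $\mathfrak V_df=\Pi _AF_1$, with $F_1$ the transform of $F$ in \eqref{Eq:FtoF0}; after the same bookkeeping this equals $\Pi _A(\widetilde F_0\cdot \widetilde \chi )$ for some $\widetilde F_0\in \maclA _{0,1/2}'(\cc d)$ and $\widetilde \chi \in \maclR ^\infty (\cc d)$, so Theorem \ref{Thm:MainResult4}\,(i) yields $\mathfrak V_df\in \maclA _{\flat _{1/2}}(\cc d)$, i.e. $f\in \maclH _{\flat _{1/2}}(\rr d)$. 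The parenthetical Beurling version is obtained verbatim by replacing $\maclA _{0,1/2}'$ with $\maclA _{0,1/2}$, $\maclA _{\flat _{1/2}}$ with $\maclA _{0,\flat _{1/2}}$, and $\maclH _{\flat _{1/2}}$ with $\maclH _{0,\flat _{1/2}}$ throughout.

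I expect the main obstacle to be the precise reconciliation of the change-of-variables dictionary \eqref{Eq:FtoF0} with the clean form \eqref{Eq:F0toF}: one must track the dilation by $2^{1/2}$, the Gaussian weight, and the phase factors simultaneously, and verify that the outcome still splits as an entire function of the correct class times a non-negative radial compactly supported cutoff times the phase $e^{-i\scal \cdo \cdo}$. The two facts that make this go through are that $\maclR ^\infty (\cc d)$ absorbs positive radial multipliers such as $e^{\frac 12(|x|^2+|\xi |^2)}$, and that $\maclA _{0,1/2}'(\cc d)$ and $\maclA _{0,1/2}(\cc d)$ are invariant under the dilation $F_0(z)\mapsto F_0(2^{1/2}z)$, which only rescales the coefficient bounds $|c(F_0,\alpha )|\lesssim r^\alpha$. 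Once this is in place the critical case $\sigma =\frac 12$ presents no further difficulty, the argument being entirely parallel to the cases $\sigma \in (\frac 12,1)$ and $\sigma =1$ already treated by the same method.
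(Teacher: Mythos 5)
Your proposal is exactly the paper's argument: the paper establishes Proposition \ref{Prop:MainResult4Conseq} by combining Lemma \ref{GrochSpaceBargm} with Theorem \ref{Thm:MainResult4}, leaving to the reader precisely the bookkeeping you describe (transporting to the Bargmann side via the bijectivity of $\mathfrak V_d$, the $2^{1/2}$-dilation invariance of the coefficient classes, absorption of the Gaussian factor into $\maclR ^\infty (\cc d)$, and the phase cancellation between \eqref{Eq:FtoF0} and \eqref{Eq:F0toF}). Since you fill in the same details the paper omits, the proposal is correct and takes essentially the same approach.
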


\par

\begin{prop}\label{Prop:MainResult5Conseq}
Let $\phi$ be as in \eqref{phidef}, $\chi \in
\maclR ^\infty (\cc d)$, $\sigma \in (0,\frac 12)$ and let
$$
\sigma _0 = \frac \sigma{1-2\sigma}.
$$
Then the following conditions
are equivalent:
\begin{enumerate}
\item $f\in \maclH _{\flat _\sigma}(\rr d)$ ($f\in \maclH
_{0,\flat _\sigma}(\rr d)$);

\vrum

\item $f=V_\phi ^*F$ for some $F\in \mascE '(\rr {2d})
\cap L^\infty (\rr {2d})$ which satisfies \eqref{Eq:F0toF}
for some $F_0\in \maclA _{\flat _{\sigma _0}}(\cc d)$
($F_0\in \maclA _{0,\flat _{\sigma _0}}(\cc d)$).
\end{enumerate}
\end{prop}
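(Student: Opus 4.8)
The plan is to transport the Bargmann-side statement of Theorem \ref{Thm:MainResult5} to the time-frequency side by means of the dictionary furnished by Lemma \ref{GrochSpaceBargm}. First I would invoke that the Bargmann transform $\mathfrak V_d$ restricts to a bijection from $\maclH _{\flat _\sigma}(\rr d)$ onto $\maclA _{\flat _\sigma}(\cc d)$, and from $\maclH _{0,\flat _\sigma}(\rr d)$ onto $\maclA _{0,\flat _\sigma}(\cc d)$ (this is the correspondence between the spaces \eqref{Eq:PilSpaces} and \eqref{Eq:PowerSeriesSpaces}). Hence condition (1) is equivalent to $\mathfrak V_d f\in \maclA _{\flat _\sigma}(\cc d)$, respectively $\mathfrak V_d f\in \maclA _{0,\flat _\sigma}(\cc d)$. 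By Theorem \ref{Thm:MainResult5}(i), applied with an auxiliary cut-off $\chi _1\in \maclR ^\infty (\cc d)$, this is in turn equivalent to the existence of $G_0\in \maclA _{\flat _{\sigma _0}}(\cc d)$ (respectively $G_0\in \maclA _{0,\flat _{\sigma _0}}(\cc d)$) with $\mathfrak V_d f=\Pi _A(G_0\cdot \chi _1)$.

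The core of the argument is to match the relation $\mathfrak V_d f=\Pi _A(G_0\cdot \chi _1)$ with the assertion $f=V_\phi ^*F$ for an $F$ of the form \eqref{Eq:F0toF}, using Lemma \ref{GrochSpaceBargm}. I would set the Bargmann-side function $F_0$ in \eqref{Eq:FtoF0} equal to $G_0\cdot \chi _1$ and solve \eqref{Eq:FtoF0} for the time-frequency datum $F$. The inversion produces three kinds of factors: a dilation $z\mapsto 2^{\frac 12}z$, which carries $G_0$ to an analytic function $F_0$ lying in the same space $\maclA _{\flat _{\sigma _0}}(\cc d)$ (respectively $\maclA _{0,\flat _{\sigma _0}}(\cc d)$), because the defining growth estimate for these spaces in Proposition \ref{Prop:AnFuncSpaceChar} is invariant under such dilations; the phase $e^{-i\scal x\xi}$ recorded in \eqref{Eq:F0toF}; and the positive radial Gaussian $e^{\frac 12(|x|^2+|\xi |^2)}$ coming from \eqref{Eq:FtoF0}, which I would absorb into the cut-off. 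The latter step is legitimate because $\maclR ^\infty _{t_1,t_2}(\cc d)$ is invariant under multiplication by positive radial functions that are bounded from above and from below on the relevant polydisc, so the lower bound near the origin survives (cf. the remark following Proposition \ref{Prop:MainResult1Conseq}). Since $\chi _1$, hence $\chi$, has compact support and $F_0$ is analytic and therefore bounded on that support, the resulting $F$ belongs to $\mascE '(\rr {2d})\cap L^\infty (\rr {2d})$, as demanded in (2); and as $\mathfrak V_d$ is injective, $\mathfrak V_d(V_\phi ^*F)=\Pi _A(G_0\cdot \chi _1)=\mathfrak V_d f$ gives $f=V_\phi ^*F$.

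Running this correspondence in both directions closes the equivalence: from (1) one obtains $G_0$ via Theorem \ref{Thm:MainResult5} and then $F$ via the inversion above, whereas from (2) one reads off $G_0\cdot \chi _1$ as the Bargmann-side function attached to $F$ by Lemma \ref{GrochSpaceBargm} and applies the converse implication of Theorem \ref{Thm:MainResult5}. The map $\chi \leftrightarrow \chi _1$ between the cut-off in \eqref{Eq:F0toF} and the one used in the theorem is a bijection of $\maclR ^\infty (\cc d)$ onto itself, so the given $\chi$ can always be realized. The Roumieu and Beurling cases are handled simultaneously, the sole difference being whether the bounds are imposed for some or for every $r\in \rr d_+$, which is precisely the distinction between $\maclA _{\flat _{\sigma _0}}$ and $\maclA _{0,\flat _{\sigma _0}}$ in the parenthetical versions.

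The step I expect to be delicate is the bookkeeping in the inversion of \eqref{Eq:FtoF0}: one must verify that, after composing with the dilation $z\mapsto 2^{\frac 12}z$, the phase contributed by \eqref{Eq:FtoF0} combines with that of \eqref{Eq:F0toF} to cancel, leaving only the analytic factor $F_0(x-i\xi)$ together with the real positive radial factors to be absorbed into $\chi$. Once this dictionary between the pairs $(G_0,\chi _1)$ on the Bargmann side and $(F_0,\chi)$ in \eqref{Eq:F0toF} is established cleanly, the proposition follows from the purely formal chain of equivalences above.
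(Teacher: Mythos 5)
Your proposal is correct and takes essentially the same route as the paper: the paper's entire proof of this proposition is the remark that it follows from Lemma \ref{GrochSpaceBargm} combined with Theorem \ref{Thm:MainResult5}, with the details (bijectivity of $\mathfrak V_d$ between $\maclH _{\flat _\sigma}$ and $\maclA _{\flat _\sigma}$, dilation invariance of $\maclA _{\flat _{\sigma _0}}$ under $z\mapsto 2^{\frac 12}z$, and absorption of the Gaussian factor into the cut-off, cf.\ the remark after Proposition \ref{Prop:MainResult1Conseq}) left to the reader exactly as you supply them. The only caveat concerns the phase bookkeeping you yourself flag as delicate: with the formulas as printed, composing \eqref{Eq:F0toF} with \eqref{Eq:FtoF0} leaves a residual factor $e^{i\scal x\xi}$ rather than exact cancellation (cancellation would require the phase $e^{-i\scal x\xi /2}$ in \eqref{Eq:F0toF}, consistent with \eqref{bargstft2}), but this discrepancy sits in the paper's own stated conventions rather than in your argument.
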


\par

\begin{prop}\label{Prop:MainResult6Conseq}
Let $\phi$ be as in \eqref{phidef}, $\chi \in
\maclR ^\infty (\cc d)$ and let $s \in (0,\frac 12)$.
Then the following conditions
are equivalent:
\begin{enumerate}
\item $f\in \maclH _{s}(\rr d)$ ($f\in \maclH
_{s}(\rr d)$);

\vrum

\item $f=V_\phi ^*F$ for some $F\in \mascE '(\rr {2d})
\cap L^\infty (\rr {2d})$ which satisfies \eqref{Eq:F0toF}
for some $F_0\in \maclA _{s}(\cc d)$
($F_0\in \maclA _{0,s}(\cc d)$).
\end{enumerate}
\end{prop}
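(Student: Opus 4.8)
The plan is to push the whole statement to the Bargmann side and reduce it to Theorem \ref{Thm:MainResult6}. Since $s\in(0,\tfrac12)\subseteq \mathbf R_+\subseteq \mathbf R_\flat$, the Bargmann transform $\mathfrak V_d$ restricts to a bijection from $\maclH _s(\rd)$ onto $\maclA _s(\cc d)$ and from $\maclH _{0,s}(\rd)$ onto $\maclA _{0,s}(\cc d)$; hence condition (1) is equivalent to $\mathfrak V_df\in \maclA _s(\cc d)$ (Roumieu) or $\mathfrak V_df\in \maclA _{0,s}(\cc d)$ (Beurling). The bridge to condition (2) is Lemma \ref{GrochSpaceBargm}: for $F\in L^\infty (\cc d)\cup \mascE '(\cc d)$ one has $\mathfrak V_d(V_\phi ^*F)=\Pi _A\Phi$, where $\Phi$ is obtained from $F$ by \eqref{Eq:FtoF0}. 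I treat the Roumieu case; the Beurling case is word for word the same with $\maclA _{0,s}$ in place of $\maclA _s$ and the ``for every $r$'' versions of the estimates.

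For (2)$\Rightarrow$(1), assume $F$ is of the form \eqref{Eq:F0toF} with $F_0\in \maclA _s(\cc d)$. Since $F_0$ is entire and $\chi$ is compactly supported, $F\in L^\infty (\cc d)\cap \mascE '(\cc d)$, so Lemma \ref{GrochSpaceBargm} applies. Inserting \eqref{Eq:F0toF} into \eqref{Eq:FtoF0} and performing the substitution $(x,\xi)\mapsto(\sqrt2\,x,-\sqrt2\,\xi)$, a direct computation gives
\[
\Phi(x+i\xi)=(2\pi^3)^{\frac d4}\,F_0\bigl(\sqrt2\,(x+i\xi)\bigr)\,\chi(\sqrt2\,x,-\sqrt2\,\xi)\,e^{\frac12(|x|^2+|\xi|^2)}\,e^{i\scal x\xi}.
\]
Writing $\Phi=\tilde F_0\cdot\tilde\chi$ with $\tilde F_0(z)=(2\pi^3)^{\frac d4}F_0(\sqrt2\,z)$, the modulus of the cofactor $\tilde\chi$ equals $\chi(\sqrt2\,\cdot)\,e^{\frac12|z|^2}$, which is non-negative, radial in each $z_j$, compactly supported and bounded below near the origin (the radial Gaussian is absorbable into the cutoff, cf. the remark after Proposition \ref{Prop:MainResult1Conseq}). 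Granting that the phase can be arranged so that $\tilde\chi\in \maclR ^\infty (\cc d)$, and using that $\maclA _s(\cc d)$ is invariant under $z\mapsto\sqrt2\,z$ — the governing term $(\log\eabs{z_j})^{1/(1-2s)}$ changes only by a bounded amount, forcing only a harmless enlargement of $r$ — we get $\tilde F_0\in \maclA _s(\cc d)$. Theorem \ref{Thm:MainResult6} then yields $\mathfrak V_df=\Pi _A(\tilde F_0\cdot\tilde\chi)\in \maclA _s(\cc d)$, i.e. $f\in \maclH _s(\rd)$.

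For the converse (1)$\Rightarrow$(2), let $f\in \maclH _s(\rd)$, so $\mathfrak V_df\in \maclA _s(\cc d)$, and fix $\chi\in \maclR ^\infty (\cc d)$. By Theorem \ref{Thm:MainResult6} there is $G_0\in \maclA _s(\cc d)$ with $\mathfrak V_df=\Pi _A(G_0\cdot\chi)$, and $G_0\cdot\chi\in L^\infty (\cc d)\cap \mascE '(\cc d)$. I then read Lemma \ref{GrochSpaceBargm} backwards: inverting \eqref{Eq:FtoF0} on $\Phi=G_0\cdot\chi$ produces a multiplier $F$ of the shape \eqref{Eq:F0toF} whose holomorphic germ is, up to the dilation $z\mapsto z/\sqrt2$, equal to $G_0$, hence in $\maclA _s(\cc d)$, and whose cutoff is again radial, non-negative and compactly supported. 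Lemma \ref{GrochSpaceBargm} gives $\mathfrak V_d(V_\phi ^*F)=\Pi _A\Phi=\mathfrak V_df$, and injectivity of $\mathfrak V_d$ forces $V_\phi ^*F=f$, which is the representation required in (2).

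The step I expect to be the main obstacle is the bookkeeping hidden in the displayed formula: one must verify that, after the $\sqrt2$-dilation and after peeling off the radial Gaussian, the leftover amplitude in \eqref{Eq:FtoF0} really is an admissible cutoff in $\maclR ^\infty (\cc d)$, and that the residual phase $e^{i\scal x\xi}$ is reconciled with the precise normalisation of \eqref{Eq:F0toF} (e.g. splitting off its holomorphic part into $\tilde F_0$), so that Theorem \ref{Thm:MainResult6} is applied with a genuinely non-negative, radial cutoff. Once this identification is in place the equivalence is immediate, and the parenthetical (Beurling) statement follows verbatim using the projective spaces $\maclA _{0,s}(\cc d)$, $\maclH _{0,s}(\rd)$ and the Beurling half of Theorem \ref{Thm:MainResult6}.
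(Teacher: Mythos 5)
Your architecture is exactly the one the paper intends (the paper itself only says the proposition ``follows from Lemma \ref{GrochSpaceBargm} and Theorems \ref{Thm:MainResult2}--\ref{Thm:MainResult6}''), and your substitution of \eqref{Eq:F0toF} into \eqref{Eq:FtoF0} is computed correctly, residual phase $e^{i\scal x\xi}$ included. But the step you ``grant'' is precisely where the proof lives, and it cannot be carried out as you suggest. Writing $z=x+i\xi$ one has $2\scal x\xi =\IM \scal zz$, so
\begin{equation*}
e^{i\scal x\xi}=e^{\frac 14\scal zz}\cdot e^{-\frac 14\overline{\scal zz}} .
\end{equation*}
The holomorphic factor $e^{\frac 14\scal zz}$ grows like $e^{\frac 14|x|^2}$ in real directions, so $F_0(\sqrt 2\, z)e^{\frac 14 \scal zz}$ can never be placed in $\maclA _s(\cc d)$, whose elements are $O(e^{r(\log \eabs z)^{1/(1-2s)}})$ by Proposition \ref{Prop:AnFuncSpaceChar}; and the anti-holomorphic factor is complex-valued with the non-radial modulus $e^{-\frac 14(|x|^2-|\xi |^2)}$, so no part of it can be absorbed into a cutoff in $\maclR ^\infty (\cc d)$, whose members are non-negative and radial in each $z_j$. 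The obstruction is not cosmetic: the factor $e^{-\frac 14\overline{\scal ww}}$ ruins the orthogonality behind Lemma \ref{Lemma:HermiteGroch}, coupling $w^{2k}$ to all powers $\bar w ^{m+2n}$ under radial integration. Indeed, taking $d=1$, $F_0\equiv 1$ and $\chi$ a radial indicator, a direct expansion of $\Pi _A\Phi$ (with $\Phi$ your function from \eqref{Eq:FtoF0}) shows its monomial coefficients decay only factorially, like $C^m m!^{-1}$ along even $m$; hence $V_\phi ^*F\in \maclH _{\flat _{1/2}}(\rr d)\setminus \maclH _s(\rr d)$. So with \eqref{Eq:F0toF} read literally, (2) does not even imply (1), and no bookkeeping can close your argument.

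What resolves this is that the phase in \eqref{Eq:F0toF} is inconsistent with \eqref{bargstft2} and \eqref{UVdef}: the intended normalisation is $e^{-i\scal x\xi /2}$. With $F(x,\xi )=F_0(x-i\xi )\, e^{-i\scal x\xi /2}\chi (x,\xi )$ your computation gives $F(\sqrt 2\, x,-\sqrt 2\, \xi )=F_0(\sqrt 2\, z)\, e^{+i\scal x\xi}\chi (\sqrt 2\, x,-\sqrt 2\, \xi )$, and this phase cancels exactly against the factor $e^{-i\scal x\xi}$ in \eqref{Eq:FtoF0}, leaving
\begin{equation*}
\Phi (z)=(2\pi ^3)^{\frac d4}\, F_0(\sqrt 2\, z)\cdot \chi (\sqrt 2\, \overline z)\,
e^{\frac 12 |z|^2} ,
\end{equation*}
a product of an element of $\maclA _s(\cc d)$ (by the dilation invariance you noted) and a cutoff in $\maclR ^\infty (\cc d)$ (non-negative, radial in each variable, compactly supported and bounded from below near the origin; the Gaussian is harmless, exactly as in the remark following Proposition \ref{Prop:MainResult1Conseq}). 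After this correction both of your implications close as you wrote them, using Theorem \ref{Thm:MainResult6} in the middle and the injectivity of $\mathfrak V_d$ at the end, in the Roumieu and Beurling cases alike. In short: your route is the right one and matches the paper's, but the issue you flagged and postponed is a genuine gap for the formula as printed, and it is removed by correcting the phase in \eqref{Eq:F0toF}, not by redistributing $e^{i\scal x\xi}$ between the two factors.
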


\par


\end{document}